\newtheorem{definition}{Definition}[section]
\newtheorem{proposition}[definition]{Proposition}
\newtheorem{theorem}[definition]{Theorem}
\newtheorem{lemma}[definition]{Lemma}
\newtheorem{corollary}[definition]{Corollary}
\newtheorem{remark}{Remark}[section]
\newcommand{\ep}{\varepsilon}
\newcommand{\R}{\mathbb{R}}
\newcommand{\N}{\mathbb{N}}
\newcommand{\pa}{\partial}
\newcommand{\lr}[1]{\langle{#1}\rangle}
\title[Wave equation with growing damping]{
Diffusion phenomena for the wave equation
with space-dependent damping term growing at infinity
}
\author{Motohiro Sobajima}
\address[M. Sobajima]{Department of Mathematics, 
Faculty of Science and Technology, Tokyo University of Science,  
2641 Yamazaki, Noda-shi, Chiba, 278-8510, Japan}
\email{msobajima1984@gmail.com}
\author{Yuta Wakasugi}
\address[Y. Wakasugi]{
Department of Engineering for Production and Environment,
Graduate School of Science and Engineering,
Ehime University,
3 Bunkyo-cho, Matsuyama, Ehime, 790-8577, Japan}
\email{wakasugi.yuta.vi@ehime-u.ac.jp}
\begin{document}
\begin{abstract}
In this paper, we study the asymptotic behavior of solutions
to the wave equation with
damping depending on the space variable and growing at the spatial infinity.
We prove that the solution is approximated
by that of the corresponding heat equation as time tends to infinity.
The proof is based on semigroup estimates for
the corresponding heat equation
and weighted energy estimates for the damped wave equation.
To construct a suitable weight function for the energy estimates,
we study a certain elliptic problem.
\end{abstract}
\keywords{Damped wave equation; diffusion phenomena;
unbounded damping}

\maketitle
\section{Introduction}
\footnote[0]{2010 Mathematics Subject Classification. 35L20; 35B40; 47B25}

Let $N\ge 2$ and
let $\Omega$ be an exterior domain in $\mathbb{R}^N$
with a smooth boundary
$\partial \Omega$.
We assume that
$0 \notin \bar{\Omega}$
without loss of generality.
We study the initial-boundary value problem for
the damped wave equation
\begin{align}%
\label{dw}
	\left\{\begin{array}{ll}
	u_{tt} - \Delta u + a(x) u_t = 0,&x\in \Omega, t>0,\\
	u(x,t) = 0,&x\in \partial \Omega, t>0,\\
	u(x,0) = u_0(x), u_t(x,0)=u_1(x),&x\in \Omega.
	\end{array}\right.
\end{align}%
Here
$u = u(x,t)$ is a real-valued unknown function.
We assume that
$a(x) \in C^2(\bar{\Omega})$,
$a(x) > 0$ on $\bar{\Omega}$
and, there exist constants
$\alpha > 0$ and $a_0 > 0$
such that
\begin{align}%
\label{a}
	\lim_{|x|\to \infty} |x|^{-\alpha}a(x) = a_0,
\end{align}%
that is,
$a(x)$ is unbounded and diverges at the spatial infinity.
The initial data
$(u_0, u_1)$
is compactly supported, let us say
${\rm supp\,} (u_0,u_1) \subset B(0,R_0)$
with some $R_0 > 0$,
and
satisfies the compatibility condition of order $1$, namely,
\begin{align}%
\label{ini}
	(u_0, u_1) \in (H^2(\Omega) \cap H^1_0(\Omega) ) \times H^1_0(\Omega).
\end{align}%
Noting the compactness of the support of the initial data
and the finite propagation property,
we can show that 
there exists a unique strong solution
\begin{align}%
\label{sol}
	u \in \bigcap_{i=0}^2 C^i([0,\infty);H^{2-i}(\Omega))
\end{align}%
in a standard way (see Ikawa \cite[Theorem 2]{Ika68}).

The term
$a(x)u_t$
describes the damping effect,
which plays a role in reducing the energy of the wave.
Our aim is to clarify
how the strength of the damping effects
the behavior of the solution.
Particularly, in this paper we study
the case where the strength of the damping increases at
the spatial infinity,
and as a typical example,
we consider the damping satisfying \eqref{a}.
We investigate
the asymptotic profile of the solution
and how the exponent
$\alpha$
is related to the decay rate of the energy of the solution.

For this purpose,
we also consider the corresponding parabolic problem
\begin{align}%
\label{h}
	\left\{\begin{array}{ll}
	v_t - a(x)^{-1}\Delta v = 0,&x\in \Omega, t>0,\\
	v(x,t) = 0,&x\in \partial \Omega, t>0,\\
	v(x,0) = v_0(x),&x\in \Omega.
	\end{array}\right.
\end{align}%
This equation is formally obtained
by dropping the term $u_{tt}$ from the equation \eqref{dw}
and dividing it by $a(x)$.

To observe the relation between the equations \eqref{dw} and \eqref{h},
we formally consider the case
$a(x) = |x|^{\alpha}$
and
$\Omega = \mathbb{R}^N$,
namely, we drop the boundary condition and consider
an initial value problem.
For the equation \eqref{dw}, we change the variables as
\begin{align*}%
	u(x,t) = \phi ( \lambda^{1/(2+\alpha)} x, \lambda^{1/2}t ),\quad
	y = \lambda^{1/(2+\alpha)} x,\ s = \lambda^{1/2}t
\end{align*}%
with a parameter $\lambda > 0$.
Then, the function $\phi = \phi(y,s)$ satisfies
\begin{align*}%
	\lambda^{2(1+\alpha)/(2+\alpha)} \phi_{ss} - \Delta_{y} \phi + |y|^{\alpha} \phi_s =0.
\end{align*}%
Thus, letting $\lambda \to 0$, we have the heat equation
\begin{align*}%
	|y|^{\alpha} \phi_s - \Delta_y \phi =0.
\end{align*}%
We note that
$\lambda \to 0$ with fixing $s$ is corresponding to $t \to \infty$.

The above observation suggests that
the solution of \eqref{dw} has the so-called {\em diffusion phenomena},
that is,
the solution of the damped wave equation \eqref{dw} is approximated by
a solution of the heat equation \eqref{h}
as time tends to infinity.
Moreover, in the constant damping case $\alpha =0$,
there is a derivation of the damped wave equation
from the heat balance law and the time-delayed Fourier's law
(Cattaneo-Vernotte law).
This derivation also indicates the diffusion phenomena.
For the detail, see for example, \cite{Ca58, Li97, Ve58}.

Indeed, for the damped wave equation with constant damping
in the whole space
\begin{align}%
\label{cdw}
	u_{tt} - \Delta u + u_t = 0,\quad x \in \mathbb{R}^N, t>0,
\end{align}%
many mathematicians studied the asymptotic behavior of solutions
and verified the diffusion phenomena.
We refer the reader to
\cite{Ma76, HsLi92, Ni96, Ni97, YaMi00, Kar00, Ni03MathZ, MaNi03, HoOg04, Na04, SaWaMZ}.
For an exterior domain $\Omega \subset \mathbb{R}^N$,
namely, in the case $a(x) \equiv 1$ in our problem \eqref{dw},
the diffusion phenomena was proved by
\cite{Ik02, IkNi03, ChHa03, RaToYo11}.

Asymptotic behavior of solutions to
the wave equation
with variable coefficient damping
\begin{align*}%
	\left\{ \begin{array}{ll}
		u_{tt} - \Delta u + b(x,t) u_t = 0,&x\in \Omega, t>0,\\
		u(x,t) =0, &x\in \partial\Omega,\\
		u(x,0) = u_0(x), \ u_t(x,0) = u_1(x),& x\in \Omega
	\end{array}\right.
\end{align*}%
has been also studied for a long time.
Concerning the behavior of the total energy
\begin{align*}%
	E(t;u) &= \frac12 \int_{\Omega} (u_t^2 + |\nabla u|^2 ) dx,
\end{align*}%
Mochizuki \cite{Mo76} considered the case $\Omega = \R^N$
with $N \neq 2$
and proved that if
\begin{align*}%
	0\le b(x,t) \le C (1+|x|)^{-1-\delta}
\end{align*}%
with some $\delta > 0$
and $|b_t(x,t)| \le C$,
then the M$\mathrm{\phi}$ller wave operator exists and
it is not identically zero.
Namely, there exists initial data
$(u_0, u_1)$ with finite energy and
for the associated solution $u$, the total energy $E(t;u)$ does not decay to zero
as $t \to \infty$,
and there also exists a solution $w$ of the free wave equation
\begin{align*}%
	w_{tt} - \Delta w = 0
\end{align*}%
such that $\lim_{t\to \infty} E(t; u-w) = 0$ holds
(see \cite{RaTa76} for the case $b = b(x) \in C_0^{\infty}(\mathbb{R}^N)$
and \cite{MoNa96, Mat02, Nak06, KaNaSo04, Nis09} for
further improvements).

On the other hand,
Matsumura \cite{Ma77} and Uesaka \cite{Ue79} studied the case
$b(x,t) \ge C(1+t)^{-1}$
and proved that the total energy decays to zero.
Mochizuki and Nakazawa \cite{MoNa96}
and Ueda \cite{Ueda16} gave a logarithmic improvement
of the assumption on the damping.
Also, Nakao \cite{Nakao01MathZ}, Ikehata \cite{Ik03JDE}
and Mochizuki and Nakao \cite{MoNak07}
proved the total energy decay for
the damping localized near infinity.

As for sharp decay estimates,
for the problem \eqref{dw} with $\Omega = \mathbb{R}^N$,
Todorova and Yordanov \cite{ToYo09} proved that
if the damping $a= a(x)$ is radially symmetric and
satisfies \eqref{a} with $\alpha \in (-1,0]$,
then the solution is estimated as
\begin{align*}%
	E(t;u) &= O(t^{-\frac{N+\alpha}{2+\alpha} -1 +\delta}),\\
	\| \sqrt{a} u(t) \|_{L^2(\Omega)}
	&= O(t^{-\frac{N+\alpha}{2(2+\alpha)} + \delta})
\end{align*}%
as $t\to \infty$.
Here $\delta$ is an arbitrary small number
(see also \cite{RaToYo09, RaToYo10} for higher order energy estimates
and see \cite{IkToYo13} for the case $\alpha = -1$).
For the asymptotic profile of solutions,
the second author \cite{Wa14} proved
that if
$\Omega = \mathbb{R}^N$ and $a(x) = (1+|x|^2)^{\alpha/2}$
with $\alpha \in (-1,0]$,
then the asymptotic profile of the solution is given by
that of the corresponding parabolic problem.
After that, in our previous results
\cite{SoWa16, SoWa17}, we extended the result of \cite{Wa14}
to exterior domains and more general (but bounded) damping
satisfying the condition \eqref{a} with $\alpha \in (-1,0]$.
Radu, Todorova and Yordanov \cite{RaToYo16} and Nishiyama \cite{Nis16}
proved the diffusion phenomena in an abstract setting.

However, for the damping term increasing at the spatial infinity,
there is no result about the diffusion phenomena,
while Khader \cite{Kh11, Kh13} studied
energy estimates and global existence of small solutions
for some nonlinear problems.


In this paper,
we establish sharp semigroup estimates for
the heat equation \eqref{h}
and prove the almost sharp weighted energy estimates
for the damped wave equation \eqref{dw}.
As their application, we show that
the solution of \eqref{dw} actually has the diffusion phenomena
in a suitable weighted $L^2$ space.

Our main result reads as follows:
\begin{theorem}\label{thm1}
Let
$u$
be the solution to \eqref{dw} and
let
$v$
be the solution to \eqref{h} with
$v_0(x) = u_0(x) + a(x)^{-1} u_1(x)$.
Then, for any $\delta>0$,
there exists $C = C(N, \alpha, a_0, R_0, \delta)>0$ such that we have for $t\ge 1$
\begin{align}%
\label{dp}
	\left\| \sqrt{a(\cdot)} (u(\cdot,t)-v(\cdot,t)) \right\|_{L^2}
	\le C (1+t)^{-\frac{N+\alpha}{2(2+\alpha)} -\frac{1+\alpha}{2+\alpha} + \delta}
		\left\| (u_0, u_1) \right\|_{H^2 \times H^1}.
\end{align}%
\end{theorem}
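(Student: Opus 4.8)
\emph{Plan.} I would set the whole argument in the weighted Hilbert space $L^2_a := L^2(\Omega; a\,dx)$, whose norm satisfies $\|\sqrt{a}\,f\|_{L^2} = \|f\|_{L^2_a}$, so that the left-hand side of \eqref{dp} is exactly $\|u-v\|_{L^2_a}$. On $L^2_a$ the operator $A := -a(\cdot)^{-1}\Delta$ with the Dirichlet boundary condition is nonnegative and self-adjoint and generates the analytic semigroup $e^{-tA}$ solving \eqref{h}, so that $v(t)=e^{-tA}v_0$. The computation $\langle Ag,g\rangle_{L^2_a}=\int_\Omega(-\Delta g)g\,dx=\|\nabla g\|_{L^2}^2$ gives the identity $\|A^{1/2}g\|_{L^2_a}=\|\nabla g\|_{L^2}$ for $g\in H^1_0(\Omega)$, which is the bridge between the semigroup calculus and the weighted energy quantities of the wave solution.

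Next I would rewrite \eqref{dw} as $u_t+Au=-a(\cdot)^{-1}u_{tt}$ and subtract $v_t+Av=0$, so that $w:=u-v$ solves $w_t+Aw=-a(\cdot)^{-1}u_{tt}$ with $w(0)=u_0-v_0=-a(\cdot)^{-1}u_1$. Applying Duhamel's formula and integrating by parts in $s$ using $u_{tt}=\partial_s u_t$ and $\partial_s e^{-(t-s)A}=Ae^{-(t-s)A}$, the boundary term at $s=0$ cancels exactly against $e^{-tA}w(0)$, leaving the clean representation
\begin{align*}
	u(t)-v(t) = -a(\cdot)^{-1}u_t(t) + \int_0^t Ae^{-(t-s)A}\big(a(\cdot)^{-1}u_t(s)\big)\,ds .
\end{align*}
The regularity \eqref{sol} justifies this, and the Dirichlet condition ensures $a(\cdot)^{-1}u_t(s)\in H^1_0(\Omega)$, so that the gradient identity above is available on the data.

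To estimate the right-hand side in $L^2_a$, the first term is $\|a(\cdot)^{-1}u_t(t)\|_{L^2_a}=\|a(\cdot)^{-1/2}u_t(t)\|_{L^2}$, which is controlled directly by the weighted energy estimates proved earlier, calibrated so as to decay at the rate on the right of \eqref{dp}. For the Duhamel integral I would split at $s=t/2$. On $[t/2,t]$ I write $A=A^{1/2}A^{1/2}$, move one factor onto the data via $\|A^{1/2}(a(\cdot)^{-1}u_t(s))\|_{L^2_a}=\|\nabla(a(\cdot)^{-1}u_t(s))\|_{L^2}$, and use the analytic bound $\|A^{1/2}e^{-\tau A}\|_{\mathcal{L}(L^2_a)}\lesssim \tau^{-1/2}$, which is integrable near the diagonal; the time integral then converts the higher-order weighted energy decay of $\nabla(a(\cdot)^{-1}u_t)$ into the claimed rate. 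On $[0,t/2]$, where $t-s\ge t/2$ is large, I instead invoke the sharp $L^1_a$–$L^2_a$ smoothing estimates for $Ae^{-\tau A}$ from the semigroup section, applied with $\|a(\cdot)^{-1}u_t(s)\|_{L^1_a}=\|u_t(s)\|_{L^1}$ and a slowly growing bound for $\int_0^{t/2}\|u_t(s)\|_{L^1}\,ds$ coming from finite propagation speed; this is the source of the arbitrarily small loss $\delta$.

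The main obstacle is twofold. First, the factor $Ae^{-(t-s)A}$ is singular as $s\to t$; resolving this through the analytic-semigroup smoothing forces the argument to rely on \emph{higher-order} weighted energy estimates for $\nabla(a(\cdot)^{-1}u_t)$, rather than the energy alone. Second, and more seriously, landing exactly on the exponent in \eqref{dp} requires that the wave solution satisfy weighted energy bounds carrying the extra decay $\tfrac{1+\alpha}{2+\alpha}$ beyond the basic rate $\tfrac{N+\alpha}{2(2+\alpha)}$ — the genuine ``diffusion-phenomena gain.'' Establishing these improved estimates, which is where a carefully chosen weight function built from the auxiliary elliptic problem enters, is the crux; once they and the sharp semigroup estimates are in hand, the bookkeeping above assembles the stated bound up to the $\delta$-loss.
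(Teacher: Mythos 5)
Your overall architecture --- working in $L^2_{d\mu}$, treating $u_{tt}$ as a source, applying Duhamel with an integration by parts in $s$, splitting at $s=t/2$, and feeding in the weighted energy estimates --- matches the paper, and your handling of the near-diagonal piece is a legitimate variant of what the paper does. The paper avoids the singularity of $L_*e^{-(t-s)L_*}$ by integrating by parts only on $[0,t/2]$ (so that $t-s\ge t/2$), which leaves the term $\int_{t/2}^t e^{-(t-s)L_*}[a^{-1}u_{tt}]\,ds$ and a boundary term at $s=t/2$, paid for by the $k=1$ energy estimate for $u_{tt}$; your factorization $A=A^{1/2}A^{1/2}$ together with $\|A^{1/2}e^{-\tau A}\|\lesssim \tau^{-1/2}$ and $\|A^{1/2}f\|_{L^2_{d\mu}}=\|\nabla f\|_{L^2}$ extracts the same amount of decay from that region, and your representation formula (with the boundary term at $s=0$ cancelling $e^{-tA}w(0)$) is correct.

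The genuine gap is in your treatment of $[0,t/2]$, in two places. First, for $N\ge 3$ the ``sharp $L^1_a$--$L^2_a$ smoothing estimate'' you invoke does not exist; this is precisely the content of Proposition \ref{prop_lplq} and Remark \ref{rem_lplq}. Because $a(x)\sim |x|^{\alpha}$ grows, the transformed operator admits the stationary solution $\psi_0(y)=|y|^{-(N-2)\alpha/4}(1-|y|^{-N+2})\in L^{p_{\alpha}+\varepsilon}(\Omega_{\Psi},d\nu)$ with $p_{\alpha}=2N(2+\alpha)/(\alpha(N-2))<\infty$, so no decay holds in $L^p$ for $p\ge p_{\alpha}$ and, by duality, no $L^q$--$L^2$ smoothing holds for $q\le (p_{\alpha})'$; since $(p_{\alpha})'>1$, the $L^1$--$L^2$ estimate fails. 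The paper must instead use the $L^q$--$L^2$ estimate with $q>(p_{\alpha})'$, which carries the additional growing weight $|x|^{(N-2)\alpha(2-q)/4}$ inside the $L^q$ norm, and this weight is then absorbed by a factor $\Phi_{\ep}^{-1/2}$ via H\"older; none of this appears in your plan, so the argument does not close for $N\ge3$. Second, even for $N=2$, where the $L^1_{d\mu}$--$L^2_{d\mu}$ estimate is available, bounding $\|u_t(s)\|_{L^1}$ by finite propagation speed gives only $\|u_t(s)\|_{L^1}\lesssim (1+s)^{N/2}\|u_t(s)\|_{L^2}$, i.e.\ localization on the wave scale $|x|\lesssim s$; carrying this through yields roughly $t^{-1+\delta}$ for the far-field integral, which misses the target rate $t^{-1/2-(1+\alpha)/(2+\alpha)+\delta}$ for every $\alpha>0$. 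The mechanism that actually works is the exponential weight: $\|u_t(s)\|_{L^1}\le \|\Phi_{\ep}^{-1}(\cdot,s)\|_{L^1}^{1/2}E_{\pa t}(s;u)^{1/2}$ with $\|\Phi_{\ep}^{-1}(\cdot,s)\|_{L^1}\lesssim (1+s)^{N/(2+\alpha)}$, reflecting concentration on the parabolic scale $|x|\lesssim s^{1/(2+\alpha)}$. Without these two ingredients the $[0,t/2]$ estimate fails.
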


As a byproduct of Theorem \ref{thm1},
we obtain the almost optimal $L^2$-estimate for the solution.
\begin{corollary}\label{cor2}
The solution $u$ of \eqref{dw} satisfies
\begin{align*}%
	\left\| \sqrt{a(\cdot)} u(\cdot,t) \right\|_{L^2}
	\le C (1+t)^{-\frac{N+\alpha}{2(2+\alpha)}+\delta}
		\left\| (u_0, u_1) \right\|_{H^2 \times H^1},
\end{align*}%
where
$\delta > 0$ is an arbitrary small number
and
$C = C(N, \alpha, a_0, R_0, \delta)>0$.
When $N=2$, we may take $\delta = 0$.
\end{corollary}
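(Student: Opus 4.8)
The plan is to deduce Corollary~\ref{cor2} directly from Theorem~\ref{thm1} by the triangle inequality, writing
\begin{align*}
	\left\| \sqrt{a(\cdot)}\, u(\cdot,t) \right\|_{L^2}
	\le \left\| \sqrt{a(\cdot)}\,(u(\cdot,t)-v(\cdot,t)) \right\|_{L^2}
		+ \left\| \sqrt{a(\cdot)}\, v(\cdot,t) \right\|_{L^2},
\end{align*}
where $v$ solves the parabolic problem \eqref{h} with $v_0 = u_0 + a^{-1}u_1$. The first term on the right is already controlled by \eqref{dp}, and its exponent $-\frac{N+\alpha}{2(2+\alpha)}-\frac{1+\alpha}{2+\alpha}+\delta$ is strictly smaller than the claimed exponent $-\frac{N+\alpha}{2(2+\alpha)}+\delta$ because $\frac{1+\alpha}{2+\alpha}>0$ for $\alpha>0$. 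Hence the difference term is asymptotically negligible, and the whole estimate reduces to bounding the parabolic term $\left\| \sqrt{a}\, v(t) \right\|_{L^2}$ by $C(1+t)^{-\frac{N+\alpha}{2(2+\alpha)}+\delta}$.

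So the essential step is a decay estimate for the heat semigroup associated with \eqref{h}. The abstract says the authors establish ``sharp semigroup estimates for the heat equation \eqref{h},'' so I would invoke exactly those. The natural Hilbert space for \eqref{h} is $L^2$ with the weight $a(x)\,dx$, since multiplying the equation by $a$ makes $-\Delta$ selfadjoint against that inner product; in particular $\|\sqrt{a}\,v(t)\|_{L^2}$ is precisely the norm of $v(t)$ in that weighted space, which is the natural quantity the semigroup estimates should govern. I would therefore expect an estimate of the form
\begin{align*}
	\left\| \sqrt{a(\cdot)}\, v(\cdot,t) \right\|_{L^2}
	\le C (1+t)^{-\frac{N+\alpha}{2(2+\alpha)}+\delta}
		\left\| \sqrt{a(\cdot)}\, v_0 \right\|_{L^2},
\end{align*}
with the rate matching the self-similar scaling $t^{1/(2+\alpha)}$ in the spatial variable: an $L^1$--$L^2$ (or weighted-$L^2$--weighted-$L^2$) smoothing estimate for $e^{t a^{-1}\Delta}$ yields the factor $t^{-(N+\alpha)/(2(2+\alpha))}$, and the $\delta$-loss absorbs the exterior-domain and near-boundary corrections.

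It then remains to compare $\left\| \sqrt{a}\, v_0 \right\|_{L^2}$ to the initial-data norm $\|(u_0,u_1)\|_{H^2\times H^1}$ appearing in the statement. Using $v_0 = u_0 + a^{-1}u_1$ and the compact support of $(u_0,u_1)$ in $B(0,R_0)$, the weight $a$ is bounded on the support, so $\sqrt{a}\,u_0 \in L^2$ is controlled by $\|u_0\|_{L^2}$, while $\sqrt{a}\,a^{-1}u_1 = a^{-1/2}u_1$ is controlled by $\|u_1\|_{L^2}$ since $a>0$ is bounded below on the compact support; thus $\left\| \sqrt{a}\, v_0 \right\|_{L^2} \le C\|(u_0,u_1)\|_{H^2\times H^1}$. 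Combining the three bounds gives the corollary. The only genuinely substantive point is the semigroup decay estimate, but that is assumed to be already proved in the body of the paper, so here it is simply cited. For the final remark that $\delta=0$ is admissible when $N=2$, I would observe that the borderline logarithmic factors in the smoothing estimate that normally force the arbitrarily small loss $\delta$ can be avoided in the two-dimensional case—so the sharp semigroup estimate holds with $\delta=0$ there—and then the same triangle-inequality argument produces the $\delta=0$ conclusion.
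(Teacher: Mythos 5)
Your overall reduction is the same as the paper's: Corollary \ref{cor2} is obtained from Theorem \ref{thm1} by the triangle inequality, noting that the exponent in \eqref{dp} is strictly better by $\tfrac{1+\alpha}{2+\alpha}>0$, so everything rests on a decay estimate for $\|\sqrt{a}\,v(t)\|_{L^2}=\|v(t)\|_{L^2_{d\mu}}$ together with the compact support of $v_0=u_0+a^{-1}u_1$. That is exactly the intended argument.

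However, the semigroup estimate you propose to cite,
\begin{align*}
	\left\| \sqrt{a}\, v(t) \right\|_{L^2}
	\le C (1+t)^{-\frac{N+\alpha}{2(2+\alpha)}+\delta}
		\left\| \sqrt{a}\, v_0 \right\|_{L^2},
\end{align*}
is false as stated: the generator $L_{\ast}$ is a nonnegative self-adjoint operator on $L^2_{d\mu}$ whose spectrum reaches down to $0$ (on an exterior domain), so $e^{-tL_{\ast}}$ is merely a contraction on $L^2_{d\mu}$ and admits no polynomial decay from $L^2_{d\mu}$ to $L^2_{d\mu}$. The correct input is the smoothing estimate of Proposition \ref{prop_lplq2}: for $N=2$ the $L^1_{d\mu}$--$L^2_{d\mu}$ bound with rate $t^{-1/2}$, which equals $t^{-\frac{N+\alpha}{2(2+\alpha)}}$ exactly and gives $\delta=0$; for $N\ge 3$ only the $L^q_{d\mu}$--$L^2_{d\mu}$ bound for $(p_{\alpha})'<q\le 2$, whose rate $t^{-(N/2)(1/q-1/2)}$ tends to, but never attains, $t^{-\frac{N+\alpha}{2(2+\alpha)}}$ as $q\downarrow (p_{\alpha})'$. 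This restriction on $q$ --- forced by the non-decaying stationary solution $\psi_0$ of Remark \ref{rem_lplq}, i.e.\ by the failure of the $L^1$--$L^2$ estimate in dimension $N\ge 3$ --- is the actual source of the $\delta$-loss; it has nothing to do with ``exterior-domain and near-boundary corrections,'' and likewise the $N=2$ improvement is not about avoiding logarithms but about the full range $q\in[1,2]$ being admissible there. With this correction your argument closes: since $v_0$ is supported in $B(0,R_0)$ and $0\notin\bar{\Omega}$, the weighted $L^q_{d\mu}$ norm of $v_0$ (with the weight $|x|^{(N-2)\alpha(2-q)/4}$) is controlled by $\|(u_0,u_1)\|_{H^2\times H^1}$ via H\"older's inequality, as you indicate.
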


Our strategy for the proof of Theorem \ref{thm1} is the following.
First, we treat the term $u_{tt}$ as a perturbation and write the equation \eqref{dw} as
\begin{align*}%
	u_t - a(x)^{-1} \Delta u = - a(x)^{-1}u_{tt}.
\end{align*}%
This is natural because we expect the diffusion phenomena,
and for the solution $v$ to the parabolic problem \eqref{h},
the term $v_{tt}$ decays faster than $v_t$ and $a(x)^{-1} \Delta v$.
Then, by Duhamel principle, the above equation
can be formally transformed into
the integral equation
\begin{align*}%
	u(t) = e^{t a(x)^{-1}\Delta} u_0 - \int_0^t e^{(t-s) a(x)^{-1}\Delta} [ a(x)^{-1} u_{ss}(s) ]\,ds.
\end{align*}%
We further apply the integration by parts to obtain
\begin{align*}%
	u(t) &= e^{t a(x)^{-1}\Delta} ( u_0 + a(x)^{-1}u_1) \\
		&\quad - \int_{t/2}^t e^{(t-s) a(x)^{-1}\Delta} [ a(x)^{-1} u_{tt}(s) ]\,ds \\
		&\quad - e^{\frac{t}{2} a(x)^{-1}\Delta} [ a(x)^{-1} u_{t}(t/2) ] \\
		&\quad - \int_0^{t/2}
				a(x)^{-1} \Delta e^{(t-s) a(x)^{-1}\Delta} [ a(x)^{-1} u_{t}(s) ]\,ds
\end{align*}%
By putting $v(t) = e^{t a(x)^{-1}\Delta} ( u_0 + a(x)^{-1}u_1)$,
it suffices to estimate the each term in the right-hand side.

To this end, in Section 2, we first investigate the heat semigroup
$e^{t a(x)^{-1}\Delta}$.
We let it make sense in an appropriate weighted space
via a suitable changing variable.
Moreover, by the Beurling-Deny criterion and the Gagliardo-Nirenberg inequality,
we derive the following optimal estimate for the semigroup
$e^{t a(x)^{-1}\Delta}$,
which is crucial for our argument:
\begin{align*}%
	\| e^{t a(x)^{-1}\Delta}v_0 \|_{L^2(\Omega, d\mu)} \le C
		t^{-(N/2)(1/q - 1/2)}
			\left( \int_{\Omega} |v_0(x)|^q |x|^{(N-2)\alpha(2-q)/4} \, d\mu \right)^{1/q},
\end{align*}%
where
$d\mu = a(x)dx$
and
$q \in [1,2] \ (N=2), \ q\in ( p_{\alpha}', 2]\ (N\ge 3)$
with
$p_{\alpha} = 2N(2+\alpha)/(\alpha(N-2))$.
In contrast with the decaying damping cases studied by \cite{SoWa16, SoWa17}
in which $q$ can be taken freely in $[1,2]$,
we cannot have $L^2$-$L^1$ estimate for $N\ge 3$.

After that, to estimate the terms including $u$ itself,
we apply energy estimates for
the damped wave equation \eqref{dw}
with a Ikehata-Todorova-Yordanov type weight function
$\Phi = \exp ( \beta A(x)/(1+t) )$,
where
$\beta$ is a positive constant and
$A(x)$ is a solution of the elliptic equation
\begin{align}%
\label{poi}
	\Delta A(x) = a(x).
\end{align}%
Roughly speaking, the weight function
$\Phi$ comes from the dual problem of the heat equation
\begin{align*}%
	a(x) v_t + \Delta v = 0.
\end{align*}%
We remark that Lions and Masmoudi \cite{LiMa98, LiMa01}
firstly introduced this type of weight functions
to study the uniqueness for the Navier-Stokes equations.

In Section 3, we discuss the construction of the auxiliary function $A(x)$.
When $a(x)$ is decaying at spatial infinity and radially symmetric,
Todorova and Yordanov \cite{ToYo09} solved the equation \eqref{poi}
by reducing the problem to an ordinary differential equation,
and applied it to obtain energy estimates for damped wave equations.
However, in our problem, we assume $a(x)$ is unbounded.
In this case we cannot directly construct a solution by the Newton potential.
Moreover, $a(x)$ is not radially symmetric as in \cite{ToYo09}
and we cannot reduce the problem to an ordinary differential equation.
Indeed, in \cite[Remark 3.1]{SoWa16},
we show a example of non-radial $a(x)$
satisfies $a(x) \to 1 \ (|x| \to \infty)$
but the corresponding $A(x)$ has bad behavior at $|x| \to \infty$.
To overcome these difficulties,
we follow our previous results \cite{SoWa17} and
weaken the problem \eqref{poi} to the inequality
\begin{align*}%
	(1-\varepsilon) a(x) \le \Delta A_{\varepsilon}(x) \le (1+\varepsilon) a(x)
\end{align*}%
with arbitrary small $\varepsilon >0$,
namely, we shall find a function $A_{\varepsilon}(x)$ satisfying the above
inequality and having good behavior as $|x| \to \infty$.

In Section 4, using this auxiliary function $A_{\varepsilon}(x)$,
we apply weighted energy methods developed by
\cite{ToYo01, Ik05IJPAM, IkTa05, RaToYo09, Ni10, Wa14, SoWa16, SoWa17}
and prove almost sharp higher order energy estimates of solutions.

Finally, in Section 5,
combining the heat semigroup estimates and the weighted energy estimates
for solutions to the damped wave equation \eqref{dw},
we have the desired estimate and finish the proof of Theorem \ref{thm1}.

We end up this section with introducing the notations and the terminologies
used throughout this paper.
We shall denote by
$C$ various constants,
which may change from line to line.
In particular,
we sometimes write
$C = C(\ast, \ldots, \ast)$
to emphasize the dependence on the parameters appearing in parentheses.

For $x_0 \in \mathbb{R}^N$ and $R>0$, we denote by
$B(x_0,R)$
the open ball centered at $x_0$ with the radius $R$.
Also, $\bar{B}(x_0, R)$ stands for the closure of $B(x_0,R)$.

We denote by
$L^p(\Omega) \, (1\le p \le \infty)$, $H^k(\Omega)\, (k\in \mathbb{Z}_{\ge 0})$
the usual Lebesgue space and Sobolev space with the norms
\begin{align*}%
	\| f \|_{L^p} &=
	\begin{cases}
		\displaystyle
		\left( \int_{\Omega} |f(x)|^p\,dx \right)^{1/p} &(1\le p <\infty),\\
		\displaystyle
		{\rm ess\,sup\,}_{x\in \Omega} |f(x)| &(p=\infty),
	\end{cases}\\
	\| f \|_{H^k} &=
		\left( \sum_{|\gamma|\le k}\| \partial_x^{\gamma} f \|_{L^2}^2 \right)^{1/2},
\end{align*}%
respectively.
For an interval $I$ and a Banach space $X$,
we define
$C^{r}(I;X)$
as the space of $r$-times continuously differentiable mapping from
$I$ to $X$ with respect to the topology in $X$.

\section{$L^p$-$L^q$ estimates for the parabolic problem}
In this section,
we consider the asymptotic behavior of solutions to
the parabolic problem \eqref{h}.
We note that the assumption \eqref{a} on $a(x)$ implies
that there exist constants
$0<a_1\le a_2$
such that
\begin{align}%
\label{a_est}
	a_1|x|^{\alpha} \le a(x) \le a_2|x|^{\alpha}
\end{align}%
holds for $x \in \bar{\Omega}$.
We remark that the results of this section
requires only \eqref{a_est} and
we do not use the property \eqref{a}.
For $1\le p < \infty$, we define the weighted Lebesgue spaces
\begin{align*}%
	L^p_{d\mu}(\Omega)
	:=
	\left\{ f \in L^p_{{\rm loc}}(\Omega) \,;\,
		\| f \|_{L^p_{d\mu}} = \left( \int_{\Omega} |f(x)|^p a(x) \,dx \right)^{1/p} < \infty
	\right\}.
\end{align*}%
We deal with the operator
\begin{align*}%
	L = - a(x)^{-1} \Delta
\end{align*}%
in $L^2_{d\mu}(\Omega)$.
The operator $L$ is regarded as
an associated operator of the symmetric sesquilinear form
\begin{align*}%
	\mathfrak{a}(u,v) := \int_{\Omega} \nabla u \cdot \nabla\bar{v} \,dx
\end{align*}%
with the domain
\begin{align*}%
	\mathcal{D} := D(\mathfrak{a})
	= \{ u\in C^{\infty}_c(\bar{\Omega}) ;\,
		u|_{\partial\Omega} = 0 \}.
\end{align*}%
The basic property of $L$ is given in the following.
\begin{lemma}\label{lem_A}
The sesquilinear form
$\mathfrak{a}$
is densely defined, continuous, accretive, symmetric and closable in
$L^2_{d\mu}(\Omega)$.
Therefore, there exists a realization
$L_{\ast}$ of $L$ such that $L_{\ast}$
is nonnegative and self-adjoint.
Moreover, $\mathcal{D}$ is a core for $L_{\ast}$.
\end{lemma}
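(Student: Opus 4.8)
The plan is to check the five asserted properties of $\mathfrak{a}$ one at a time and then invoke the first representation theorem for closable symmetric forms. The algebraic and boundedness properties are routine. Density of $\mathcal{D}$ in $L^2_{d\mu}(\Omega)$ follows from the inclusion $C^\infty_c(\Omega)\subset\mathcal{D}$ together with the fact that, since $a$ is continuous and strictly positive, $a$ is bounded above and below by positive constants on every compact subset of $\bar{\Omega}$; hence the $L^2_{d\mu}$-norm is locally equivalent to the $L^2$-norm, and the usual density of $C^\infty_c(\Omega)$ carries over. Symmetry is clear from $\mathfrak{a}(u,v)=\int_\Omega\nabla u\cdot\nabla\bar v\,dx=\overline{\mathfrak{a}(v,u)}$, and accretivity is even stronger, since $\mathfrak{a}(u,u)=\int_\Omega|\nabla u|^2\,dx\ge 0$ shows $\mathfrak{a}$ is nonnegative. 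Continuity relative to the form norm $\|u\|_{\mathfrak{a}}^2:=\mathfrak{a}(u,u)+\|u\|_{L^2_{d\mu}}^2$ is then automatic from Cauchy--Schwarz: $|\mathfrak{a}(u,v)|\le\|\nabla u\|_{L^2}\|\nabla v\|_{L^2}\le\|u\|_{\mathfrak{a}}\|v\|_{\mathfrak{a}}$.

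The one genuinely analytic property is closability, which I would establish through the standard criterion: if $u_n\in\mathcal{D}$ satisfies $u_n\to 0$ in $L^2_{d\mu}(\Omega)$ and $(u_n)$ is Cauchy for $\mathfrak{a}$, then $\mathfrak{a}(u_n,u_n)\to 0$. By the Cauchy property $\nabla u_n$ converges in $L^2(\Omega)^N$ to some $g$, and it suffices to show $g=0$. Since $a$ is bounded below on compact sets, $u_n\to 0$ in $L^2_{d\mu}(\Omega)$ forces $u_n\to 0$ in $L^2_{\mathrm{loc}}(\Omega)$, hence in $\mathcal{D}'(\Omega)$; as the distributional gradient is continuous on $\mathcal{D}'(\Omega)$, we get $\nabla u_n\to 0$ in $\mathcal{D}'(\Omega)$, and comparing with the $L^2$-limit yields $g=0$. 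Thus $\mathfrak{a}$ is closable, and its closure $\bar{\mathfrak{a}}$ is a densely defined, nonnegative, closed symmetric form. The first representation theorem (Kato) then furnishes a unique nonnegative self-adjoint operator $L_{\ast}$ with $\langle L_{\ast}u,v\rangle_{d\mu}=\bar{\mathfrak{a}}(u,v)$. That $L_{\ast}$ realizes $L=-a(x)^{-1}\Delta$ is verified on $\mathcal{D}$ by one integration by parts: for $u,v\in\mathcal{D}$, using that $v$ vanishes on $\partial\Omega$, one has $\langle L_{\ast}u,v\rangle_{d\mu}=\int_\Omega\nabla u\cdot\nabla\bar v\,dx=\int_\Omega(-a^{-1}\Delta u)\bar v\,a\,dx$, so $\mathcal{D}\subseteq D(L_{\ast})$ and $L_{\ast}u=-a^{-1}\Delta u$ there.

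The most delicate assertion is that $\mathcal{D}$ is an operator core for $L_{\ast}$, i.e.\ that $L_{\ast}$ coincides with the closure of $L_{\ast}|_{\mathcal{D}}$, and I expect this to be the main obstacle. Because $L_{\ast}\ge 0$ is self-adjoint, $I+L_{\ast}$ is a graph-norm homeomorphism of $D(L_{\ast})$ onto $L^2_{d\mu}(\Omega)$, so the claim is equivalent to the density of $(I+L_{\ast})\mathcal{D}$ in $L^2_{d\mu}(\Omega)$, which I would prove by showing that the orthogonal complement is trivial. A function $w\in L^2_{d\mu}(\Omega)$ orthogonal to $(I+L_{\ast})\mathcal{D}$ satisfies, upon testing against $u\in C^\infty_c(\Omega)\subset\mathcal{D}$, the elliptic equation $\Delta w=a w$ in $\mathcal{D}'(\Omega)$, so that $w\in H^2_{\mathrm{loc}}(\Omega)$ by interior regularity (using $a\in C^2$); testing against the boundary-touching elements of $\mathcal{D}$ then encodes the homogeneous Dirichlet condition for $w$ on $\partial\Omega$. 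The remaining step is an energy identity: multiplying $\Delta w=aw$ by $\bar w$ against a cut-off $\chi_R$ supported in $B(0,R)$ and integrating by parts gives a Caccioppoli-type estimate which, letting $R\to\infty$ and exploiting $\int_\Omega a|w|^2\,dx<\infty$ (here the growth of $a$ at infinity is an advantage), yields $\nabla w\in L^2(\Omega)$ with no boundary contribution at infinity together with the identity $\int_\Omega|\nabla w|^2\,dx+\int_\Omega a|w|^2\,dx=0$, whence $w=0$. The points demanding real care are precisely the control of the cut-off cross terms as $R\to\infty$ and the treatment of the boundary terms on $\partial\Omega$; an alternative route, avoiding essential self-adjointness, is to approximate each $u\in D(L_{\ast})$ in the graph norm directly by elements of $\mathcal{D}$ via interior and boundary elliptic regularity combined with cut-off and mollification.
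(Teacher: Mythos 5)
Your plan is correct and, for the first two assertions, runs parallel to the paper: the paper also dismisses density, continuity, accretivity and symmetry as obvious, obtains closability by citing Ouhabaz (Proposition 1.3 of \cite{Ou}) rather than by your direct $u_n\to 0$ in $L^2_{d\mu}\Rightarrow\nabla u_n\to 0$ in $\mathcal{D}'(\Omega)$ argument (which is exactly the content of that citation, and is valid here because $0\notin\bar\Omega$ and \eqref{a_est} make $a$ bounded below on all of $\bar\Omega$), and then builds the Friedrichs extension from the closed form and identifies $L_\ast u=-a^{-1}\Delta u$ on $\mathcal{D}$ by one integration by parts, just as you do. The genuine divergence is the core property: the paper's sketch does not prove it at all --- it imports the characterization \eqref{da} of $D(\mathfrak{a}_\ast)$ from \cite[Lemma 2.1]{SoWa16} and leaves the (operator) core claim to that reference --- whereas you attack it head-on as essential self-adjointness of $L|_{\mathcal{D}}$, reducing it to triviality of the orthogonal complement of $(I+L_\ast)\mathcal{D}$ and killing a putative $w$ with $\Delta w=aw$ by a cut-off energy identity. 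Your route is more self-contained and the growth $a\gtrsim|x|^\alpha$ does indeed make the cross terms $R^{-2}\int_{R\le|x|\le 2R}|w|^2\lesssim R^{-2-\alpha}\int a|w|^2\to 0$ harmless at infinity; the one place where your sketch is thinner than you acknowledge is the boundary: to extract $w|_{\partial\Omega}=0$ from testing against boundary-touching elements of $\mathcal{D}$ you need $H^2$-regularity of $w$ up to $\partial\Omega$ \emph{before} knowing its boundary condition, which forces a transposition (very weak solution) argument or local flattening rather than standard Dirichlet boundary regularity. You flag this as delicate, and it is resolvable, so I regard your proposal as a correct alternative that trades the paper's reliance on \cite{SoWa16} for a longer but fully explicit uniqueness argument.
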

While this lemma was already discussed in \cite{SoWa16},
we give a sketch of the proof for the reader's convenience.
\begin{proof}[Proof of Lemma \ref{lem_A}]
It is obvious that $\mathfrak{a}$
is densely defined, continuous, accretive, symmetric
in $L^2_{d\mu}(\Omega)$.
Moreover, by \cite[Proposition 1.3]{Ou},
we easily see that $\mathfrak{a}$ is closable in $L^2_{d\mu}(\Omega)$.
Let
$\mathfrak{a}_{\ast}$
be the closure of
$\mathfrak{a}$.
Then, \cite[Lemma 2.1]{SoWa16} shows that
the bilinear form $\mathfrak{a}_{\ast}$ is characterized as
\begin{align}%
\label{da}
	D(\mathfrak{a}_{\ast})
	&= \left\{ u \in L^2_{d\mu}(\Omega) \cap \dot{H}^1(\Omega) ;
	\int_{\Omega} \frac{\partial u}{\partial x_j} \varphi \,dx
	= - \int_{\Omega} u \frac{\partial \varphi}{\partial x_j}\,dx \right.\\
\nonumber
	&\left. \hspace{110pt}\ \mbox{for}\ \varphi \in C_c^{\infty}(\mathbb{R}^N),
	\ j=1,\ldots,N
	\right\},\\
\nonumber
	\mathfrak{a}_{\ast}(u,v)
	&= \int_{\Omega} \nabla u \cdot \nabla v\,dx.
\end{align}%
We define the Friedrichs extension
$L_{\ast}$
of $L$ by
\begin{align*}%
	D(L_{\ast}) &:= \left\{
		u \in D(\mathfrak{a}_{\ast}) ; 
		\ \mbox{there exists}\ f\in L^2_{d\mu}(\Omega) \ \mbox{such that}\right.
		\\
	&\left. \hspace{70pt}\ 
	\mathfrak{a}_{\ast} (u,v) = (f,v)_{L^2_{d\mu}}
	\ \mbox{for any}\ v\in D(\mathfrak{a}_{\ast}) \right\},\\
	L_{\ast} u &= f.
\end{align*}%
Then, by \cite[Theorem X.23]{ReSi},
we see that
$L_{\ast}$
is a nonnegative self-adjoint operator on
$L^2_{d\mu}(\Omega)$.
Moreover, we have for $u, v \in \mathcal{D}$,
\begin{align*}%
	(-Lu, v)_{L^2_{d\mu}}
	= \int_{\Omega} (-L u) v d\mu
	= \int_{\Omega} (\Delta u ) v dx
	= \mathfrak{a}_{\ast}(u,v).
\end{align*}%
By a density argument, we can take $v \in D(\mathfrak{a}_{\ast})$
in the above equalities.
This implies $u \in D(L_{\ast})$ and $Lu = L_{\ast}u$,
and hence,
$L_{\ast}$ is an extension of $L$.
\end{proof}
\subsection{Transformation into a usual Lebesgue spaces}
We introduce a diffeomorphism
$\Psi \in C^{\infty}(\Omega ; \Omega_{\Psi})$ 
as
\begin{align*}%
	\Psi(x) := |x|^{\alpha/2}x,\quad
	\Omega_{\Psi} :=
		\left\{ y \in \mathbb{R}^N ; |y|^{-\alpha/(2+\alpha)}y \in \Omega \right\}.
\end{align*}%
Since we assume that
$\partial \Omega$
is smooth, so is $\partial \Omega_{\Psi}$.

The following lemma is a fundamental fact on changes of variables.
\begin{lemma}\label{lem_dpsi}
\begin{itemize}
\item[(i)]
One has
\begin{align*}%
	\frac{\partial \Psi}{\partial x}(x)
	&= |x|^{\alpha/2} \left( I + \frac{\alpha}{2} Q(x) \right),
	\quad Q(x) = \left( \frac{x_jx_k}{|x|^2} \right)_{j,k=1,\ldots,N},\\
	\det \left( \frac{\partial \Psi}{\partial x}(x) \right)
	&= \frac{2+\alpha}{2} |x|^{N \alpha/2}.
\end{align*}%
Here we denote by $I$ the identity matrix of the order $N$.
\item[(ii)]
Define $m(x) := |x|^{-\alpha}a(x)$
for $x \in \Omega$,
$\tilde{m}(y) := m( \Psi^{-1}(y) )$
for $y \in \Omega_{\Psi}$
and $d \nu = \tilde{m}(y)dy$.
Then, the norm of $L^2(\Omega_{\Psi}, d\nu)$ is equivalent to
$L^2$-norm with usual Lebesgue measure on $\Omega_{\Psi}$.
Moreover,
\begin{align*}%
	\int_{\Omega_{\Psi}} |v(y)|^2 \,d\nu
	= \frac{2+\alpha}{2}
		\int_{\Omega} |v \left( \Psi(x) \right) |^2 |x|^{(N-2)\alpha/2}\, d\mu
\end{align*}%
holds for $v \in C_c^{\infty} ( \Omega_{\Psi} )$.
\end{itemize}
\end{lemma}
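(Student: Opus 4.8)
The plan is to treat both parts as direct computations with the map $\Psi(x)=|x|^{\alpha/2}x$, the only conceptual point being the rank-one structure of $Q(x)$.

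For part (i), I would write the $i$-th component as $\Psi_i(x)=|x|^{\alpha/2}x_i$ and differentiate by the product and chain rules. Using $\partial_{x_j}|x|=x_j/|x|$ one gets $\partial_{x_j}|x|^{\alpha/2}=\frac{\alpha}{2}|x|^{\alpha/2-2}x_j$, so that
\[
\frac{\partial \Psi_i}{\partial x_j}=|x|^{\alpha/2}\left(\delta_{ij}+\frac{\alpha}{2}\frac{x_ix_j}{|x|^2}\right),
\]
which is exactly the entry of $|x|^{\alpha/2}(I+\frac{\alpha}{2}Q(x))$. For the determinant I would factor out the scalar to obtain $|x|^{N\alpha/2}\det(I+\frac{\alpha}{2}Q(x))$, and then observe that $Q(x)=\hat x\,\hat x^{\top}$ with $\hat x=x/|x|$ is the orthogonal projection onto the line $\R\hat x$. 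Hence $I+\frac{\alpha}{2}Q(x)$ has eigenvalue $1+\frac{\alpha}{2}$ on $\R\hat x$ and eigenvalue $1$ on the orthogonal complement, giving $\det(I+\frac{\alpha}{2}Q(x))=1+\frac{\alpha}{2}=\frac{2+\alpha}{2}$ and the claimed formula. Equivalently, one may invoke the matrix determinant lemma $\det(I+uv^{\top})=1+v^{\top}u$ with $u=\frac{\alpha}{2}\hat x$, $v=\hat x$.

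For part (ii), the norm equivalence is immediate from \eqref{a_est}: since $m(x)=|x|^{-\alpha}a(x)$ satisfies $a_1\le m(x)\le a_2$, the same bounds hold for $\tilde m(y)=m(\Psi^{-1}(y))$, whence $a_1\int_{\Omega_\Psi}|v|^2\,dy\le\int_{\Omega_\Psi}|v|^2\,d\nu\le a_2\int_{\Omega_\Psi}|v|^2\,dy$. For the integral identity I would simply change variables $y=\Psi(x)$ in $\int_{\Omega_\Psi}|v(y)|^2\,d\nu$; this is legitimate since a short check shows $\Psi$ maps $\Omega$ diffeomorphically onto $\Omega_\Psi$ with $\Psi^{-1}(y)=|y|^{-\alpha/(2+\alpha)}y$. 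Using part (i), $dy=\frac{2+\alpha}{2}|x|^{N\alpha/2}\,dx$ and $\tilde m(\Psi(x))=m(x)=|x|^{-\alpha}a(x)$, so the integrand becomes $|v(\Psi(x))|^2\,|x|^{-\alpha}a(x)\cdot\frac{2+\alpha}{2}|x|^{N\alpha/2}$. Collecting the powers of $|x|$ through $-\alpha+\frac{N\alpha}{2}=\frac{(N-2)\alpha}{2}$ and recalling $d\mu=a(x)\,dx$ yields exactly the stated identity.

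Both computations are routine and there is no genuine obstacle. The single point that requires care is the bookkeeping of the exponent of $|x|$ in part (ii), and the mildly non-obvious step is recognizing the projection/rank-one structure of $Q(x)$, which reduces the Jacobian determinant to a one-line calculation rather than an $N\times N$ cofactor expansion.
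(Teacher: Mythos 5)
Your proof is correct and follows essentially the same route as the paper: the paper dismisses part (i) as straightforward (your rank-one/eigenvalue computation of $\det(I+\tfrac{\alpha}{2}Q)$ is the standard way to fill that in), and your part (ii) is exactly the paper's argument — the bounds $a_1\le \tilde m\le a_2$ for the norm equivalence and the change of variables $y=\Psi(x)$ with the Jacobian from (i), collecting $-\alpha+\tfrac{N\alpha}{2}=\tfrac{(N-2)\alpha}{2}$.
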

\begin{proof}
(i) is straightforward.
For (ii), we note that the assumption \eqref{a_est} implies
\begin{align*}%
	a_1 \le m(x) \le a_2
\end{align*}%
for any $x\in \bar{\Omega}$.
From this, we obtain
$a_1 \le \tilde{m}(y) \le a_2$
for any $y \in \overline{\Omega_{\Psi}}$,
which leads to the equivalence between
$L^2(\Omega_{\Psi}, d\nu)$ and $L^2(\Omega_{\Psi})$.
Moreover, we have
\begin{align*}%
	\int_{\Omega_{\Psi}} |v(y)|^2\, d\nu
	&= \int_{\Omega_{\Psi}} |v(y)|^2 m \left( \Psi^{-1}(y) \right)\,dy\\
	&= \int_{\Omega} |v \left( \Psi(x) \right)|^2 m(x)
		\left| \det \left( \frac{\partial \Psi}{\partial x} \right) \right|\,dx \\
	&= \frac{2+\alpha}{2} \int_{\Omega} | v\left( \Psi(x) \right) |^2
			|x|^{(N-2)\alpha/2}\, d\mu,
\end{align*}%
which completes the proof.
\end{proof}

In view of the above lemma, we introduce an isometry from
$L^2(\Omega_{\Psi}, d\nu)$ to $L^2(\Omega, d\mu)$ as
\begin{align}%
\label{j}
	Jv(x) := \sqrt{ \frac{2+\alpha}{2} } |x|^{(N-2)\alpha/4}
		v \left( \Psi(x) \right)
\end{align}%
for $v \in L^2(\Omega_{\Psi}, d\nu)$.
It is easy to see that the inverse of $J$ is given by
\begin{align}%
\label{ji}
	J^{-1} u (y) =
		\sqrt{\frac{2}{2+\alpha}} |y|^{-(N-2)\alpha/(2(2+\alpha))}
		u \left( |y|^{-\alpha/(2+\alpha)} y \right)
\end{align}%
for $u \in L^2(\Omega, d\mu)$.
Then, the operator $L$ is transformed into the following uniformly elliptic operator
on $L^2(\Omega_{\Psi}, d\nu)$.

\begin{lemma}\label{lem_jiaj}
The operator
$B=J^{-1}LJ$ with the domain
$\mathcal{D}_{\Psi} := \{ u \circ \Psi^{-1} \,;\, u \in \mathcal{D} \}$
is given by
\begin{align}%
\label{B}
	B v(y) = \tilde{m}(y)^{-1}
		\left( - {\rm div\,}( b(y) \nabla v(y))
		- \frac{(N-2)^2 \alpha (4+\alpha)}{16|y|^2} v(y) \right)
\end{align}%
for $y \in \Omega_{\Psi}$ and $v \in \mathcal{D}_{\Psi}$,
where
\begin{align*}%
	b(y) = I + \alpha \left( 1+ \frac{\alpha}{4} \right) Q(\Psi^{-1}(y)).
\end{align*}%
Moreover, $B$ is a uniformly elliptic operator in $\Omega_{\Psi}$
with bounded coefficients.
\end{lemma}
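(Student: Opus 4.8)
The plan is to compute the conjugated operator $B = J^{-1}LJ$ directly by carrying out the change of variables $y = \Psi(x)$. Fix $v \in \mathcal{D}_{\Psi}$ and set $u = Jv$, so that, writing $c = \sqrt{(2+\alpha)/2}$ and $\gamma = (N-2)\alpha/4$,
\begin{align*}
	u(x) = c\,|x|^{\gamma}\, v(\Psi(x)).
\end{align*}
Since $L = -a(x)^{-1}\Delta$, the task reduces to expanding $\Delta u$ by the chain rule using the expressions for $\partial\Psi/\partial x$ and $\det(\partial\Psi/\partial x)$ provided by Lemma \ref{lem_dpsi} (i), then applying $a(x)^{-1} = |x|^{-\alpha}m(x)^{-1}$ and finally the multiplier $J^{-1}$, re-expressing every power of $|x|$ through the identity $|x| = |y|^{2/(2+\alpha)}$.

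First I would isolate the principal part. Writing $w = v\circ\Psi$ and differentiating twice, the leading second-order contribution is governed by $(\partial\Psi/\partial x)^2$. Since $\partial\Psi/\partial x = |x|^{\alpha/2}(I + \tfrac{\alpha}{2}Q)$ is symmetric with $Q$ idempotent ($Q^2 = Q$), one gets
\begin{align*}
	\left( \frac{\partial\Psi}{\partial x} \right)^2
	= |x|^{\alpha}\left( I + \alpha\Bigl(1 + \tfrac{\alpha}{4}\Bigr)Q \right)
	= |x|^{\alpha}\, b(\Psi(x)),
\end{align*}
which already explains the appearance of the matrix $b$. The factor $|x|^{\alpha}$ is exactly cancelled by the $|x|^{-\alpha}$ coming from $a(x)^{-1}$, leaving the principal symbol $\tilde m(y)^{-1}\sum_{k,l}b_{kl}\partial_{y_k}\partial_{y_l}v$. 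The remaining work is to track the lower-order terms: those produced by $\Delta\Psi_k$, those from differentiating the weight $|x|^{\gamma}$, and the cross term $\nabla(|x|^{\gamma})\cdot\nabla w$. After applying $J^{-1}$ and converting powers of $|x|$ to $|y|$, the first-order terms must reorganize precisely into the divergence correction $\sum_k\partial_{y_k}b_{kl}$, so that the second- and first-order parts combine into $-\mathrm{div}(b\nabla v)$, while the surviving zeroth-order terms must collapse to the single potential $-\tfrac{(N-2)^2\alpha(4+\alpha)}{16|y|^2}v$.

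Once the formula \eqref{B} is established, uniform ellipticity and boundedness follow from the algebraic structure. The matrix $Q$ is the orthogonal projection onto the radial direction, hence has eigenvalues $1$ and $0$ (with multiplicity $N-1$); consequently $b$ has eigenvalues $(1+\alpha/2)^2$ and $1$, both independent of $y$, so that $|\xi|^2 \le \xi\cdot b(y)\xi \le (1+\alpha/2)^2|\xi|^2$ for all $\xi\in\R^N$. The entries of $b$ are bounded since those of $Q$ lie in $[-1,1]$, and $\tilde m$ satisfies $a_1\le\tilde m\le a_2$ by Lemma \ref{lem_dpsi} (ii). Finally, because $0\notin\bar\Omega$ we have $|x|\ge\delta_0 > 0$ on $\Omega$, hence $|y| = |x|^{(2+\alpha)/2}$ is bounded below on $\Omega_{\Psi}$ and the potential coefficient $(N-2)^2\alpha(4+\alpha)/(16|y|^2)$ is bounded. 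Thus $B$ is uniformly elliptic with bounded coefficients.

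The main obstacle is the bookkeeping in the second paragraph: verifying that, after conjugation by the weight $|x|^{\gamma}$ and passage to the $y$-variable, the numerous lower-order contributions assemble \emph{exactly} into the divergence form together with the single inverse-square potential, and in particular that the constant comes out to be $(N-2)^2\alpha(4+\alpha)/16$. An equivalent and perhaps more economical route is to argue at the level of the associated form: since $J$ is an isometry, $B$ is the operator associated in $L^2(\Omega_{\Psi}, d\nu)$ with $\mathfrak{b}(v,w) = \int_{\Omega}\nabla(Jv)\cdot\nabla(Jw)\,dx$, where the weights conspire so that the principal term reduces to $\int_{\Omega_{\Psi}}\nabla v\cdot b\nabla\bar w\,dy$ against the plain measure; the inverse-square potential then emerges through a Hardy-type completion of the square, and the delicate point—pinning down the exact constant—is the same in either approach.
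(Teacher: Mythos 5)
Your proposal is sound in outline, and the ``alternative route'' you relegate to the final paragraph is in fact exactly the proof the paper gives: since $J$ is an isometry of $L^2(\Omega_\Psi,d\nu)$ onto $L^2(\Omega,d\mu)$ and $L$ is the operator of the Dirichlet form, the paper identifies $B$ through the identity
\begin{align*}
\int_{\Omega_\Psi}(Bv)\,\varphi\,d\nu=\int_{\Omega}\nabla (Jv)\cdot\nabla (J\varphi)\,dx ,
\end{align*}
computes $\nabla(J\varphi)$ (which involves only \emph{one} derivative of the weight $|x|^{(N-2)\alpha/4}$), changes variables, and then performs a single integration by parts. With $\beta=(N-2)\alpha/4$ the cross term becomes $\beta(1+\tfrac{\alpha}{2})\int |y|^{-2}y\cdot\nabla(v\varphi)\,dy$, and using $\operatorname{div}(y/|y|^2)=(N-2)/|y|^2$ the potential constant falls out as $\beta^2-(N-2)\beta(1+\tfrac{\alpha}{2})=-\tfrac{(N-2)^2\alpha(4+\alpha)}{16}$ in one line. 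This is the decisive advantage of the form route over your primary (pointwise chain-rule) route, where one must also differentiate the weight twice and track $\Delta\Psi_k$. Your algebraic observations are correct and match the paper's: $(I+\tfrac{\alpha}{2}Q)^2=I+\alpha(1+\tfrac{\alpha}{4})Q=b$ via $Q^2=Q$, the eigenvalues $1$ and $(1+\alpha/2)^2$ give the two-sided ellipticity bound, and $0\notin\bar\Omega$ bounds the inverse-square potential. The one genuine shortcoming is that, as written, your primary route never actually establishes the formula: you assert that the lower-order terms ``must reorganize'' into the divergence correction plus the single potential and that the constant ``comes out to be'' $(N-2)^2\alpha(4+\alpha)/16$, but you identify this verification as the main obstacle rather than carrying it out. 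To close that gap you should either execute the bookkeeping (using $\Delta(|x|^{\gamma})=\gamma(N-2+\gamma)|x|^{\gamma-2}$ and the explicit $\det(\partial\Psi/\partial x)$) or simply promote your last paragraph to the proof, which is what the paper does.
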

\begin{proof}
Let $\varphi \in \mathcal{D}_{\Psi}$ be arbitrary fixed
and let $v \in \mathcal{D}_{\Psi}$.
Then, we obtain
\begin{align*}%
	\int_{\Omega_{\Psi}}
		\left( Bv(y) \right) \varphi(y)\,d\nu
	&= \frac{2+\alpha}{2} \int_{\Omega}
		\left( J^{-1} L J v(\Psi(x)) \right) \varphi(\Psi(x)) |x|^{(N-2)\alpha/2}\,d\mu \\
	&= \int_{\Omega} \left( AJv(x) \right) J \varphi(x)\,d\mu \\
	&= \int_{\Omega} \nabla Jv(x) \cdot \nabla J\varphi(x)\,dx.
\end{align*}%
Noting that
\begin{align*}%
	\sqrt{\frac{2}{2+\alpha}} \nabla J \varphi (x)
	&= \frac{(N-2)\alpha}{4} |x|^{(N-2)\alpha/4-2} x \varphi(\Psi(x)) \\
	&\quad + |x|^{N\alpha/4} \left( I + \frac{\alpha}{2}Q(x) \right)\nabla \varphi ( \Psi(x) )
\end{align*}%
and putting
$\beta = (N-2)\alpha/4$, we have
\begin{align*}%
	&\int_{\Omega} \nabla Jv(x) \cdot \nabla J \varphi(x)\,dx\\
	&= \int_{\Omega} \left( \beta \frac{y}{|y|^2}v(y)
		+ \left( I + \frac{\alpha}{2}Q(\Psi^{-1}(y)) \right)\nabla v(y) \right) \\
	&\qquad\quad \cdot \left( \beta \frac{y}{|y|^2} \varphi (y)
		+ \left( I + \frac{\alpha}{2}Q(\Psi^{-1}(y)) \right)\nabla \varphi (y) \right)\,dy \\
	&= \int_{\Omega}
		\left( I + \frac{\alpha}{2}Q(\Psi^{-1}(y)) \right)^2 \nabla v(y) \cdot \nabla \varphi(y)\,dy
		+ \beta \left( 1 + \frac{\alpha}{2} \right)
		\int_{\Omega} \frac{y}{|y|^2} \cdot \nabla ( v(y) \varphi(y) )\,dy\\
	&\quad + \beta^2 \int_{\Omega} \frac{1}{|y|^2} v(y) \varphi(y)\,dy \\
	&= - \int_{\Omega} {\rm div\,} (b(y)\nabla v(y) ) \varphi(y)\,dy
		+ \left( \beta^2 - (N-2)\beta \left( 1+\frac{\alpha}{2} \right) \right)
		\int_{\Omega} \frac{1}{|y|^2} v(y) \varphi(y)\,dy.
\end{align*}%
Finally, the uniform ellipticity of $B$ follows from
$Q \ge 0$ as a symmetric matrix.
The boundedness of the coefficients immediately shown
by the assumption $0 \notin \bar{\Omega}$ and the definition of $Q$.
\end{proof}

From the above lemma,
by the same procedure as \cite[pp.100--101]{Ou},
we can associate an operator
$B_2$ on $L^2(\Omega_{\Psi}, d\nu)$
with the sesquilinear form
\begin{align*}%
	\mathfrak{b}(u,v)
	&= \int_{\Omega_{\Psi}} \left(
		\left( b(y) \nabla u (y) \right) \cdot \nabla v(y)
		- \frac{ (N-2)^2 \alpha (4+\alpha)}{16|y|^2} u(y) v(y) \right)\, dy,\\
	D(\mathfrak{b})
	&= \{ u \circ \Psi^{-1} ; u \in D(\mathfrak{a}_{\ast}) \},
\end{align*}%
and $-B_2$ is the generator of an analytic semigroup
$T_2(t) = e^{-t B_2}$ on $L^2(\Omega, d\nu)$.
Therefore, it follows from \cite[Theorem 1.52]{Ou} that
$-B_2$ generates an analytic semigroup on $L^2(\Omega_{\Psi}, d\nu)$.

Furthermore,
applying \cite[Theorem 4.28]{Ou},
we extend it to an analytic semigroup
generated by $-B$
in $L^p(\Omega_{\Psi}, d\nu)$.
Summarizing the above, we have the following:
\begin{lemma}\label{lem_sg}
For every
$1<p<\infty$,
the operator
$-B_p$
defined by \eqref{B}
in
$L^p(\Omega_{\Psi}, d\nu)$
endowed with the domain
$D(B_p) = W^{2,p}(\Omega_{\Psi}, d\nu) \cap W_0^{1,p}(\Omega_{\Psi}, d\nu)$
generates an analytic semigroup
$( T_p(t) )_{t \ge 0}$
and
$\mathcal{D}_{\Psi}$
is a core for
$B_p$.
Moreover, 
$T_p(t)$
is consistent for all
$1<p<\infty$.
\end{lemma}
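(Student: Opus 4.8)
The plan is to transfer the $L^2$ analytic semigroup $T_2(t)=e^{-tB_2}$, already produced from the symmetric form $\mathfrak{b}$ via \cite[Theorem 1.52]{Ou}, to the whole scale of $L^p$ spaces by means of Gaussian heat-kernel bounds. Two structural simplifications make this feasible. First, since $0\notin\bar{\Omega}$ we also have $0\notin\overline{\Omega_{\Psi}}$, so $|y|$ stays bounded below on $\Omega_{\Psi}$ and the zeroth-order coefficient $c(y)=\frac{(N-2)^2\alpha(4+\alpha)}{16|y|^2}$ is bounded; hence the potential term $-c(\cdot)$ is a bounded, self-adjoint, $p$-independent multiplier. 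Second, by Lemma \ref{lem_dpsi} the weight satisfies $a_1\le\tilde{m}\le a_2$, so $L^p(\Omega_{\Psi},d\nu)=L^p(\Omega_{\Psi},dy)$ with equivalent norms and the weight plays no essential role in the $L^p$ analysis.

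The decisive step is to establish Gaussian upper bounds for the kernel of $T_2(t)$. Writing $B_2=B_2^{(0)}-c(\cdot)$, let $B_2^{(0)}$ be the self-adjoint operator attached to the purely second-order form $\mathfrak{b}_0(u,v)=\int_{\Omega_{\Psi}}(b(y)\nabla u)\cdot\nabla v\,dy$. By Lemma \ref{lem_jiaj} the matrix $b$ is real, symmetric, bounded and uniformly elliptic, so $B_2^{(0)}$ falls squarely within the Davies--Aronson theory for divergence-form operators with measurable coefficients: its semigroup is sub-Markovian and its kernel obeys $0\le K_0(t,y,z)\le C\,t^{-N/2}\exp(-\kappa|y-z|^2/t)$. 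Reinstating the bounded potential multiplies the kernel by a factor controlled by $e^{\omega t}$ with $\omega=\|c\|_{L^{\infty}}$, so the kernel of $T_2(t)$ enjoys Gaussian upper bounds with an admissible exponential-in-time prefactor. These are exactly the hypotheses feeding \cite[Theorem 4.28]{Ou}, which, together with the self-adjointness of $B_2$ on $L^2(\Omega_{\Psi},d\nu)$, yields a consistent family $(T_p(t))_{1<p<\infty}$ of semigroups, analytic on each $L^p(\Omega_{\Psi},d\nu)$ and generated by $-B_p$; consistency of the scalar factor $c$ propagates to consistency of the full family.

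It remains to pin down the domain and the core. The identity $D(B_p)=W^{2,p}(\Omega_{\Psi},d\nu)\cap W_0^{1,p}(\Omega_{\Psi},d\nu)$ is unaffected by the bounded perturbation $-c(\cdot)$ and follows from $L^p$ elliptic regularity up to the boundary: the coefficients $b$ are smooth (as $Q\circ\Psi^{-1}$ is smooth on $\overline{\Omega_{\Psi}}$, which avoids the origin) and $\partial\Omega_{\Psi}$ is smooth, so the Agmon--Douglis--Nirenberg estimates identify the maximal $L^p$ realization with $W^{2,p}\cap W_0^{1,p}$. Finally, $\mathcal{D}_{\Psi}$ is dense in $L^p(\Omega_{\Psi},d\nu)$ and is carried into $L^p$ by $B$; since the graph norm is dominated by the elliptic estimate, the closure of $\mathcal{D}_{\Psi}$ in the graph norm recovers all of $D(B_p)$, so $\mathcal{D}_{\Psi}$ is a core. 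The step I expect to demand the most care is the verification of the Gaussian bounds — that is, confirming that the transformed operator genuinely meets the sub-Markovian and uniform-ellipticity hypotheses behind \cite[Theorem 4.28]{Ou} — since this is precisely where the geometric conclusions of Lemma \ref{lem_jiaj} are consumed; once the kernel bounds are in hand, the perturbation, regularity, and density arguments are entirely standard.
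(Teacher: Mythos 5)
Your proposal is correct and follows essentially the same route as the paper, which simply invokes the form method to produce $B_2$ on $L^2(\Omega_{\Psi},d\nu)$ and then cites \cite[Theorem 1.52]{Ou} and \cite[Theorem 4.28]{Ou} for the extension to a consistent analytic family on $L^p$. Your additional work --- splitting off the bounded potential (using $0\notin\overline{\Omega_{\Psi}}$), verifying the Gaussian kernel bounds, and identifying the domain and core via elliptic regularity --- is exactly the verification of the hypotheses that the paper leaves implicit.
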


By virtue of Lemma \ref{lem_sg},
we introduce $(T(t))_{t\ge 0}$ as a semigroup generated by $-B$ with
Dirichlet boundary condition.

Next, we consider
$L^2$-$L^q$ estimates for the semigroup $(T(t))_{t\ge 0}$.
As we said in the introduction,
this is crucial to prove the diffusion phenomena
for the damped wave equation \eqref{dw}.

\begin{proposition}\label{prop_lplq}
Let
$N\ge 2$
and
$\alpha > 0$.
Then we have the following
$L^{p}$-$L^2$ estimates:
\begin{itemize}
\item[(i)]
If $N=2$, then
the the semigroup
$(T(t))_{t\ge 0}$ is sub-Markovian and
satisfies the estimate
\begin{align*}%
	\| T(t) f \|_{L^{\infty}(\Omega_{\Psi}, d\nu)}
	\le C t^{-1/2} \| f \|_{L^2(\Omega_{\Psi}, d\nu)}; 
\end{align*}%
\item[(ii)]
If $N\ge 3$, then for every $2 \le p < p_{\alpha} := 2N(2+\alpha)/(\alpha(N-2))$,
then the semigroup
$(T(t))_{t\ge 0}$
satisfies the estimate
\begin{align*}%
	\| T(t) f \|_{L^p(\Omega_{\Psi}, d\nu)}
	\le C t^{-(N/2)(1/2 - 1/p)} \| f \|_{L^2(\Omega_{\Psi}, d\nu)}.
\end{align*}%
\end{itemize}
\end{proposition}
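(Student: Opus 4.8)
The plan is to split according to whether the inverse-square potential in $B$ (Lemma~\ref{lem_jiaj}) is present, and in each case to combine three ingredients: the order structure of $(T(t))_{t\ge0}$ from the Beurling--Deny criteria, a Nash/Gagliardo--Nirenberg inequality tailored to the form $\mathfrak{b}$, and the standard conversion of such an inequality, together with $B\ge0$ and analyticity, into an $L^2$--$L^p$ smoothing bound. I would first record the algebraic fact that drives everything: writing $b(y)\nabla u\cdot\nabla u=|\nabla u|^2+\kappa|\pa_r u|^2$ with $\kappa=\alpha(1+\alpha/4)$ and noting that the potential constant equals $(N-2)^2\kappa/4=:c$, the radial Hardy inequality $\int|\pa_r u|^2\,dy\ge\frac{(N-2)^2}{4}\int|y|^{-2}u^2\,dy$ shows that the radial enhancement absorbs the potential, so that $\mathfrak{b}(u,u)\ge\int_{\Omega_\Psi}|\nabla u|^2\,dy$. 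With the boundedness of $b$ this yields $\mathfrak{b}(u,u)\simeq\int|\nabla u|^2\,dy$ and re-proves $B\ge0$, hence $\|T(t)\|_{L^2(d\nu)\to L^2(d\nu)}\le1$. The first Beurling--Deny criterion always holds ($\mathfrak{b}(|u|,|u|)=\mathfrak{b}(u,u)$), so $T(t)$ is positivity preserving.

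For $N=2$ the potential constant vanishes and $B$ reduces to a uniformly elliptic divergence-form operator with bounded coefficients and no zeroth-order term. Then the second Beurling--Deny criterion also holds (truncation does not increase $\mathfrak{b}$), so $T(t)$ is sub-Markovian and contracts every $L^r(\Omega_\Psi,d\nu)$. Since $d\nu\simeq dy$ and $\mathfrak{b}\simeq\int|\nabla u|^2\,dy$, the Euclidean Nash inequality transfers to $\|u\|_{L^2(d\nu)}^{2+4/N}\le C\,\mathfrak{b}(u,u)\,\|u\|_{L^1(d\nu)}^{4/N}$ with $N=2$. The classical Nash computation (differentiating $t\mapsto\|T(t)f\|_{L^2(d\nu)}^2$ for $f$ normalized in $L^1$) gives $\|T(t)\|_{L^1\to L^2}\le Ct^{-N/4}=Ct^{-1/2}$, and by self-adjointness and duality $\|T(t)\|_{L^2\to L^\infty}\le Ct^{-1/2}$, which is (i).

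For $N\ge3$ the potential is genuinely present and only positivity survives: replacing $u$ by $u\wedge1$ makes the negative potential term less favorable, so the second Beurling--Deny criterion fails, $T(t)$ is not $L^\infty$-contractive, and plain ultracontractivity is unavailable. The bound $\mathfrak{b}(u,u)\gtrsim\int|\nabla u|^2$ with the ordinary Sobolev inequality already gives the estimate for $2\le p\le2^*=2N/(N-2)$ via $\|T(t)f\|_{L^p(d\nu)}\le C\,\mathfrak{b}(T(t)f,T(t)f)^{\theta/2}\|T(t)f\|_{L^2(d\nu)}^{1-\theta}$ with $\theta=N(\tfrac12-\tfrac1p)\le1$ and the analyticity bound $\mathfrak{b}(T(t)f,T(t)f)=\|B^{1/2}T(t)f\|_{L^2(d\nu)}^2\le Ct^{-1}\|f\|_{L^2(d\nu)}^2$, reproducing the rate $t^{-\theta/2}=t^{-(N/2)(1/2-1/p)}$. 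To reach the full range $p<p_\alpha$, which lies strictly beyond $2^*$, I would exploit the scale invariance of the model: separating into spherical harmonics, the radial operator $-(1+\kappa)\Delta_{\mathrm{rad}}-c|y|^{-2}$ has indicial exponents $\gamma_\pm=\tfrac{N-2}{2}(1\pm(1+\kappa)^{-1/2})$, and since $1+\kappa=((2+\alpha)/2)^2$ the smaller one is $\gamma_-=\frac{(N-2)\alpha}{2(2+\alpha)}$ with $N/\gamma_-=p_\alpha$. This identifies $(p_\alpha',p_\alpha)$ as the interval on which the inverse-square dynamics is $L^p$-bounded, and within it I would iterate the Sobolev smoothing (or use the Hardy factorization $u=|y|^{-\gamma_-}w$, turning $\mathfrak{b}$ into a Caffarelli--Kohn--Nirenberg weighted Dirichlet form) to upgrade the $2^*$-estimate to every $p<p_\alpha$ with the same Gaussian rate.

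The main obstacle is precisely this upgrade past $2^*$: a static Gagliardo--Nirenberg inequality of the interpolation form above forces $\theta\le1$, hence $p\le2^*$, so the extension to $p<p_\alpha$ cannot come from a single functional inequality but must use the admissible $L^p$-range dictated by the critical inverse-square term, together with a bootstrap that keeps every intermediate exponent below $p_\alpha$. This also explains the stated limitation, namely the impossibility of an $L^2$--$L^1$ estimate for $N\ge3$: the endpoint $p=\infty$ would require $\gamma_-=0$, which happens only when $\alpha=0$, and for $\alpha>0$ the threshold $p_\alpha=N/\gamma_-$ is finite and sharp. Controlling the borderline behaviour as $p\uparrow p_\alpha$, and transferring the boundedness theory for the Euclidean inverse-square operator to the anisotropic operator $B$ on the exterior domain $\Omega_\Psi$, is where I expect the real work to lie.
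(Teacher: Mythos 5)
Your treatment of part (i) is correct and essentially the paper's: sub-Markovian property via the two Beurling--Deny criteria (the zeroth-order term vanishes when $N=2$), the equivalence $\mathfrak{b}(u,u)\gtrsim\|\nabla u\|_{L^2}^2$, and a Nash-type iteration giving $\|T(t)\|_{L^2\to L^\infty}\le Ct^{-1/2}$; the paper packages the last step as an application of Ouhabaz's ultracontractivity theorem, which is the same computation. Your first stage of part (ii), namely $2\le p\le 2N/(N-2)$ via the Gagliardo--Nirenberg interpolation combined with $\|BT(t)f\|_{L^2}\le Ct^{-1}\|f\|_{L^2}$, is also sound and coincides with the paper's $r=2$ step.

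The genuine gap is the range $2N/(N-2)<p<p_\alpha$, which you yourself flag as ``where the real work lies'' but do not carry out. Two concrete ingredients are missing, and both are supplied by the paper. First, any bootstrap through a chain of exponents $2=q_0<q_1<\cdots<q_m$ requires, at each step, an analytic-semigroup bound $\|B_{q}T(t)f\|_{L^{q}(d\nu)}\le Ct^{-1}\|f\|_{L^{q}(d\nu)}$ for the \emph{intermediate} exponents $q$; this is not free, because $B$ contains a critical negative inverse-square potential, and the paper proves $m$-sectoriality of $B_p$ only for $2-\tfrac{4}{4+\alpha}<p<2+\tfrac{4}{\alpha}$ by an explicit integration by parts against $\bar\varphi|\varphi|^{p-2}|x|^{-\beta}$ (Lemma \ref{lem_sect}). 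Second, each step needs the $L^p$-energy Sobolev inequality
\begin{equation*}
\|\varphi\|_{L^{pN/(N-2)}(\Omega_\Psi,d\nu)}^p\le C\,\mathrm{Re}\int_{\Omega_\Psi}(B_p\varphi)\bar\varphi|\varphi|^{p-2}\,d\nu ,
\end{equation*}
whose proof (Lemma \ref{lem_embed}) absorbs the potential into the radial enhancement of $b(y)$ using Hardy's inequality applied to $|\varphi|^{p/2}$; the Hardy constant there is $(\tfrac{N-2}{2})^2\tfrac{4}{p^2}$, and the requirement that it beat $(\tfrac{N-2}{2})^2((\tfrac{2+\alpha}{2})^2-1)$ after multiplication by $(p-1)(\tfrac{2+\alpha}{2})^2$ is exactly what forces $p<2+\tfrac{4}{\alpha}$. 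Note in particular that the correct constraint is on the intermediate exponents being below $2+\tfrac{4}{\alpha}$, not below $p_\alpha=\tfrac{N}{N-2}(2+\tfrac{4}{\alpha})$ as you state: each step gains a factor $N/(N-2)$, so the final target reaches $p_\alpha$ while the sources stay in the smaller window. Your indicial-exponent/spherical-harmonics heuristic correctly identifies the threshold $p_\alpha=N/\gamma_-$ (consistent with the stationary solution in Remark \ref{rem_lplq}), but it is only a sharpness argument: it does not yield the positive smoothing estimate, and it does not literally apply here since $\tilde m$ and $\Omega_\Psi$ are not radial. Without Lemmas \ref{lem_sect} and \ref{lem_embed} (or substitutes for them), the proposal does not prove part (ii) beyond $p=2N/(N-2)$.
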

\begin{remark}\label{rem_lplq}
Let $N\ge 3, \Omega = \mathbb{R}^N\setminus B(0,1)$
and $\varepsilon > 0$.
Then, the function
\begin{align*}%
	\psi_0(y) = |y|^{-(N-2)\alpha/4} \left( 1 - |y|^{-N+2} \right)
\end{align*}%
belongs to $L^{p_{\alpha}+\varepsilon}(\Omega_{\Psi}, d\nu)$
and satisfies $B\psi_0 = 0$ in $\Omega_{\Psi}$.
This means that
$\psi_0$
is a stationary solution of the equation
\begin{align*}%
	\left\{ \begin{array}{ll}
		\psi_t + B_{p_{\alpha}+\varepsilon} \psi = 0,
			&y \in \Omega_{\Psi},\ t > 0,\\
		\psi (y,t) = 0,
			&y \in \partial \Omega_{\Psi},\ t>0,\\
		\psi (y,0) = \psi_0 (y),
			&y \in \Omega_{\Psi}.
		\end{array}\right.
\end{align*}%
By virtue of
$\psi_0$,
any decay estimate for the semigroup $(T(t))_{t\ge 0}$
in
$L^{p_{\alpha}+\varepsilon}(\Omega_{\Psi}, d\nu)$
cannot be expected.
We would expect that for $N \ge 3$
the $L^2$-$L^{p_{\alpha}}$ estimate is given by
\begin{align*}%
	\| T(t) f \|_{L^{p_{\alpha}}(\Omega_{\Psi}, d\nu)}
	\le C t^{-(N/2)(1/2-1/p_{\alpha})}(1+\log (1+t))^{1/p_{\alpha}}
		\| f \|_{L^2(\Omega_{\Psi}, d\nu)}.
\end{align*}%
\end{remark}

We postpone the proof of Proposition \ref{prop_lplq}
in the following subsections, and here
we give a practical version of it.
For the original variable $x$, 
Proposition \ref{prop_lplq} and the identity
\begin{align*}%
	\left( \frac{2+\alpha}{2} \right)^{1-p/2}
	\int_{\Omega} | Jv(x) |^p |x|^{-(N-2)\alpha(p-2)/4} a(x)\, dx
	= \int_{\Omega_{\Psi}} |v(y)|^p\, d\nu
\end{align*}%
imply the following estimate.
\begin{proposition}\label{prop_lplq2}
Let $N \ge 2$ and $\alpha>0$ and let $v$ be a solution to \eqref{h}.
Then, we have the following.
\begin{itemize}
\item[(i)]
If $N=2$, then we have
\begin{align*}%
	\| v(t) \|_{L^2(\Omega, d\mu)} \le C t^{-1/2} \| v_0 \|_{L^1(\Omega, d\mu)};
\end{align*}%
\item[(ii)]
If $N\ge 3$, then for every $(p_{\alpha})^{\prime} < q \le 2$, we have
\begin{align*}%
	\| v(t) \|_{L^2(\Omega, d\mu)} \le C
		t^{-(N/2)(1/q - 1/2)}
			\left( \int_{\Omega} |v_0(x)|^q |x|^{(N-2)\alpha(2-q)/4} \, d\mu \right)^{1/q}.
\end{align*}%
\end{itemize}
\end{proposition}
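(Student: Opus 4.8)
The plan is to transport everything through the isometry $J$ and then dualize the estimates of Proposition~\ref{prop_lplq}. Since $v$ solves \eqref{h}, i.e. $v_t = -Lv$ with $L = -a(x)^{-1}\Delta$, and $B = J^{-1}LJ$ by Lemma~\ref{lem_jiaj}, the solution is represented as $v(t) = J T(t) J^{-1} v_0$, where $T(t)=e^{-tB}$. Because $J$ is an isometry from $L^2(\Omega_\Psi, d\nu)$ onto $L^2(\Omega, d\mu)$ (see \eqref{j} and Lemma~\ref{lem_dpsi}), the first step is to rewrite the left-hand side in the transformed variables,
\[
\| v(t) \|_{L^2(\Omega, d\mu)} = \| T(t) J^{-1} v_0 \|_{L^2(\Omega_\Psi, d\nu)}.
\]
Thus it suffices to bound $T(t)$ as a map from a suitable $L^q(\Omega_\Psi, d\nu)$ into $L^2(\Omega_\Psi, d\nu)$ and then to translate the $L^q(\Omega_\Psi, d\nu)$-norm of $J^{-1}v_0$ back to the variable $x$ by the change-of-variables identity quoted just before the statement.

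The key observation is that Proposition~\ref{prop_lplq} provides $L^2$-$L^p$ bounds with $p \ge 2$, while here I need $L^q$-$L^2$ bounds with $q \le 2$, and these are dual. Since $B_2$ is associated with the symmetric form $\mathfrak{b}$, it is self-adjoint on $L^2(\Omega_\Psi, d\nu)$, so $T(t)=T_2(t)$ is self-adjoint, and by the consistency in Lemma~\ref{lem_sg} it agrees with $T_p(t)$ on overlaps. Hence, for $f \in L^q \cap L^2$, I would write
\[
\| T(t) f \|_{L^2(\Omega_\Psi, d\nu)} = \sup_{\|g\|_{L^2}\le 1} \left| ( T(t) f, g )_{L^2(\Omega_\Psi, d\nu)} \right| = \sup_{\|g\|_{L^2}\le 1} \left| ( f, T(t) g )_{L^2(\Omega_\Psi, d\nu)} \right|,
\]
and then estimate by H\"older's inequality $|( f, T(t)g )| \le \|f\|_{L^q} \|T(t)g\|_{L^{q'}}$, with $q'$ the conjugate exponent. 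Applying Proposition~\ref{prop_lplq}(ii) with $p=q'$ — admissible precisely because $q > (p_\alpha)'$ forces $q' < p_\alpha$, while $q\le 2$ gives $q'\ge 2$ — yields $\|T(t)g\|_{L^{q'}} \le C t^{-(N/2)(1/2 - 1/q')}\|g\|_{L^2}$, and since $1/2 - 1/q' = 1/q - 1/2$ this produces exactly the decay $t^{-(N/2)(1/q-1/2)}$. The case $N=2$ is identical, dualizing the $L^2$-$L^\infty$ bound of Proposition~\ref{prop_lplq}(i) against $q=1$, $q'=\infty$, which gives the factor $t^{-1/2}$.

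Finally I rewrite $\|J^{-1}v_0\|_{L^q(\Omega_\Psi, d\nu)}$ in the variable $x$. Putting $v = J^{-1}v_0$ (so $Jv = v_0$) into the quoted identity with exponent $q$ in place of $p$ gives
\[
\int_{\Omega_\Psi} |J^{-1}v_0(y)|^q \, d\nu = \left( \frac{2+\alpha}{2} \right)^{1-q/2} \int_\Omega |v_0(x)|^q |x|^{(N-2)\alpha(2-q)/4} \, d\mu,
\]
the weight exponent flipping sign because $-(q-2)=2-q$, and the harmless constant prefactor being absorbed into $C$. For $N=2$, $q=1$ the weight reduces to $|x|^{0}=1$, recovering the plain $L^1(\Omega, d\mu)$-norm of part (i). The only genuine technical point is the legitimacy of the duality step on the weighted $L^p$-scale, namely that the self-adjoint $L^2$-semigroup acts consistently between $L^q$ and $L^2$ so that $( T(t)f, g ) = ( f, T(t)g )$ holds for $f\in L^q\cap L^2$ and $g\in L^2$; this follows from the consistency in Lemma~\ref{lem_sg}, and for general $v_0$ with finite right-hand side one concludes by density. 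I expect this consistency bookkeeping, rather than any hard estimate, to be the main (and only mild) obstacle.
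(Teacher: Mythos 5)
Your proposal is correct and follows essentially the same route as the paper: Proposition \ref{prop_lplq2} is obtained there directly from Proposition \ref{prop_lplq} together with the change-of-variables identity, with the duality/self-adjointness step ($\|T(t)\|_{L^q\to L^2}=\|T(t)\|_{L^2\to L^{q'}}$ for the symmetric semigroup) left implicit. You have merely spelled out that implicit step, and your bookkeeping of the exponents, the admissible range $q>(p_\alpha)'$, and the weight $|x|^{(N-2)\alpha(2-q)/4}$ all match the paper.
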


\subsection{Proof of Proposition \ref{prop_lplq} for $N=2$}
In this subsection we use the following
Gagliardo--Nirenberg inequality with $N=2$.
\begin{lemma}[Gagliardo--Nirenberg inequality]\label{lem_gn}
Let $N=2$.
For every $2 \le q < \infty$, there exists a constant $C_q$ such that
\begin{align*}%
	\| w \|_{L^q(\Omega)} \le C_q
		\| \nabla w \|_{L^2(\Omega)}^{1-2/q} \| w \|_{L^2(\Omega)}^{2/q}
\end{align*}%
holds for any $w \in W^{1,2}_0(\Omega)$.
\end{lemma}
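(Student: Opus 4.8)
The plan is to transplant the inequality from the exterior domain $\Omega$ to the whole plane $\R^2$, where the classical Gagliardo--Nirenberg inequality is available, exploiting the homogeneous Dirichlet condition built into $W^{1,2}_0(\Omega)$. First I would take $w$ in the dense subclass $\{\varphi\in C_c^\infty(\bar\Omega):\varphi|_{\partial\Omega}=0\}$ and let $\tilde w$ be its extension by zero to $\R^2$. Since $w$ vanishes on $\partial\Omega$, the distributional gradient of $\tilde w$ carries no singular layer on $\partial\Omega$, so $\tilde w\in W^{1,2}(\R^2)$ with $\|\tilde w\|_{L^r(\R^2)}=\|w\|_{L^r(\Omega)}$ for every $1\le r\le\infty$ and $\|\nabla\tilde w\|_{L^2(\R^2)}=\|\nabla w\|_{L^2(\Omega)}$. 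Hence it suffices to prove the inequality for $\tilde w$ on $\R^2$ and then recover the general $w\in W^{1,2}_0(\Omega)$ by density.

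On $\R^2$ I would first establish the endpoint ($q=4$) Ladyzhenskaya inequality. For $\varphi\in C_c^\infty(\R^2)$ the fundamental theorem of calculus gives $\sup_{x_1}\varphi(x_1,x_2)^2\le 2\int_\R|\varphi\,\partial_1\varphi|\,dx_1$ and, symmetrically, $\sup_{x_2}\varphi(x_1,x_2)^2\le 2\int_\R|\varphi\,\partial_2\varphi|\,dx_2$; multiplying the two bounds, integrating over $\R^2$, and using the Cauchy--Schwarz and Young inequalities yields $\|\varphi\|_{L^4}^2\le\sqrt2\,\|\varphi\|_{L^2}\|\nabla\varphi\|_{L^2}$. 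For a general exponent $2\le q<\infty$ I would combine this with the elementary interpolation $\|\varphi\|_{L^q}\le\|\varphi\|_{L^2}^{1-\theta}\|\varphi\|_{L^4}^{\theta}$ on the range $2\le q\le4$, and for $q>4$ apply the base inequality to a power $|\varphi|^{s}$ (suitably mollified) together with Hölder's inequality, closing the exponents via the interpolation $\|\varphi\|_{L^{q-2}}\le\|\varphi\|_{L^2}^{1-\lambda}\|\varphi\|_{L^q}^{\lambda}$; the resulting weights reduce exactly to $1-2/q$ and $2/q$, which are forced by the scaling identity $\tfrac1q=(1-\theta)\tfrac12$ valid when $N=2$ (i.e.\ $\theta=1-2/q$). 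This second step is entirely classical and could instead be quoted directly from the Gagliardo--Nirenberg--Sobolev theory.

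No deep ingredient is required, so there is no genuine obstacle; the two points deserving attention are, first, the justification that the zero extension lands in $W^{1,2}(\R^2)$ --- this is precisely where $w\in W^{1,2}_0(\Omega)$, rather than merely $W^{1,2}(\Omega)$, is essential --- and, second, the bookkeeping of the Hölder and interpolation exponents when passing from $q=4$ to general $q$. Finally, the restriction $q<\infty$ is sharp: it reflects the failure of the embedding $W^{1,2}(\R^2)\hookrightarrow L^\infty(\R^2)$, so no inequality of this type can survive at $q=\infty$.
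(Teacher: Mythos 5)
Your proposal is correct, but it is worth noting that the paper does not prove this lemma at all: it simply cites Friedman's book, so any honest proof is ``a different route.'' Your self-contained argument --- zero-extension from $W^{1,2}_0(\Omega)$ to $W^{1,2}(\R^2)$, the Ladyzhenskaya endpoint $\|\varphi\|_{L^4}^2\le\sqrt2\,\|\varphi\|_{L^2}\|\nabla\varphi\|_{L^2}$ via the two one-dimensional sup-bounds, interpolation for $2\le q\le4$, and an iteration for $q>4$ --- is the standard elementary proof, and the exponents $1-2/q$, $2/q$ do close as you claim. The one spot to tighten is the $q>4$ step as literally described: applying the $L^4$ inequality to $|\varphi|^{s}$ produces the gradient term $\bigl(\int|\varphi|^{2(s-1)}|\nabla\varphi|^2\bigr)^{1/2}$, and no H\"older split of that integral keeps $\nabla\varphi$ in $L^2$ while putting the remaining power of $\varphi$ in a finite Lebesgue norm. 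The clean version of the trick uses instead the $L^1$-gradient inequality $\|\psi\|_{L^2(\R^2)}\le\|\nabla\psi\|_{L^1(\R^2)}$ applied to $\psi=|\varphi|^{q/2}$, followed by Cauchy--Schwarz, which yields $\|\varphi\|_{L^q}^{q/2}\le\tfrac q2\|\varphi\|_{L^{q-2}}^{q/2-1}\|\nabla\varphi\|_{L^2}$; this is exactly where the $L^{q-2}$ norm you then interpolate against $L^2$ and $L^q$ comes from, and with $\lambda=q(q-4)/(q-2)^2$ the absorption gives $\|\varphi\|_{L^q}\le(q/2)^{1-2/q}\|\varphi\|_{L^2}^{2/q}\|\nabla\varphi\|_{L^2}^{1-2/q}$. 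With that substitution the proof is complete; what your approach buys over the paper's bare citation is a fully elementary, self-contained derivation with an explicit (if non-optimal) constant $C_q$.
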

For the proof, see for example
\cite{Frbook}.
From this, we obtain the following estimate.
\begin{lemma}\label{lem_gn2}
For every $2 \le q < \infty$, we have
\begin{align}
\label{gn2}
	\| \varphi \|_{L^q(\Omega_{\Psi}, d\nu)}
		\le C  (B_2 \varphi, \varphi )_{d\nu}^{1/2-1/q}
			\| \varphi \|_{L^2(\Omega_{\Psi}, d\nu)}^{1/q}
\end{align}
for any $\varphi \in L^2(\Omega_{\Psi}, d\nu)$.
\end{lemma}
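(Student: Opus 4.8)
The plan is to reduce the weighted inequality \eqref{gn2} to the unweighted Gagliardo--Nirenberg inequality of Lemma \ref{lem_gn} on the domain $\Omega_\Psi$, exploiting three structural facts: the boundedness of the weight $\tilde m$, the uniform ellipticity of $b(y)$, and---crucially for $N=2$---the vanishing of the zeroth-order term in the form associated with $B_2$. I read the statement with the usual convention that when $\varphi$ lies outside the form domain $D(\mathfrak b)$ the quantity $(B_2\varphi,\varphi)_{d\nu}=\mathfrak b(\varphi,\varphi)$ is $+\infty$, so the inequality is trivial there; it therefore suffices to argue for $\varphi\in D(\mathfrak b)$, and by Lemma \ref{lem_sg} even for $\varphi$ in the core $\mathcal D_\Psi$, extending afterwards by density.

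First I would record the elementary norm comparisons coming from $a_1\le\tilde m(y)\le a_2$ on $\overline{\Omega_\Psi}$ (a consequence of \eqref{a_est}, as already used in Lemma \ref{lem_dpsi}): for every $1\le r<\infty$ one has $a_1^{1/r}\|\varphi\|_{L^r(\Omega_\Psi)}\le\|\varphi\|_{L^r(\Omega_\Psi,d\nu)}\le a_2^{1/r}\|\varphi\|_{L^r(\Omega_\Psi)}$, so that $L^r(\Omega_\Psi,d\nu)$ and $L^r(\Omega_\Psi)$ coincide as sets with equivalent norms. Next, since $N=2$ the factor $(N-2)^2$ annihilates the singular zeroth-order term in the form of Lemma \ref{lem_jiaj}, so that for $\varphi\in D(\mathfrak b)$ we simply have
\begin{align*}
	(B_2\varphi,\varphi)_{d\nu}=\mathfrak b(\varphi,\varphi)=\int_{\Omega_\Psi}\left(b(y)\nabla\varphi(y)\right)\cdot\nabla\varphi(y)\,dy.
\end{align*}
Because $Q\ge 0$ as a symmetric matrix, we have $b(y)=I+\alpha\left(1+\tfrac{\alpha}{4}\right)Q(\Psi^{-1}(y))\ge I$, and hence the coercivity bound $(B_2\varphi,\varphi)_{d\nu}\ge\|\nabla\varphi\|_{L^2(\Omega_\Psi)}^2$.

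With these in hand the proof is direct. For $\varphi\in\mathcal D_\Psi$ (hence $\varphi\in W^{1,2}_0(\Omega_\Psi)$, the boundary $\partial\Omega_\Psi$ being smooth), Lemma \ref{lem_gn} gives $\|\varphi\|_{L^q(\Omega_\Psi)}\le C_q\|\nabla\varphi\|_{L^2(\Omega_\Psi)}^{1-2/q}\|\varphi\|_{L^2(\Omega_\Psi)}^{2/q}$. I would then pass the left-hand norm and the $L^2$-factor to the weighted measure via the comparison above, and replace the Dirichlet factor using the coercivity bound and the matching of exponents $\|\nabla\varphi\|_{L^2}^{1-2/q}=\left(\|\nabla\varphi\|_{L^2}^2\right)^{1/2-1/q}\le(B_2\varphi,\varphi)_{d\nu}^{1/2-1/q}$. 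This yields \eqref{gn2} with $C=(a_2/a_1)^{1/q}C_q$. The extension from $\mathcal D_\Psi$ to all of $D(\mathfrak b)$ follows by density: convergence in the form norm forces $H^1(\Omega_\Psi)$-convergence (since $\mathfrak b$ is equivalent to the Dirichlet seminorm and $L^2(d\nu)=L^2$), which in turn gives $L^q$-convergence by the embedding $H^1(\Omega_\Psi)\hookrightarrow L^q(\Omega_\Psi)$ valid for all $q<\infty$ when $N=2$.

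I do not anticipate a genuinely hard step here; the two points that must be handled cleanly are the vanishing of the Hardy-type potential term precisely at $N=2$ (which is what lets coercivity give a clean lower bound by $\|\nabla\varphi\|_{L^2}^2$), and the bookkeeping that aligns the exponent $1/2-1/q$ appearing on $(B_2\varphi,\varphi)_{d\nu}$ with the exponent $1-2/q$ on $\|\nabla\varphi\|_{L^2}$ in the Gagliardo--Nirenberg inequality.
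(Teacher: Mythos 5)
Your proof is correct and follows essentially the same route as the paper's: reduce to $\varphi\in\mathcal{D}_{\Psi}$ by the core/density property, use $b(y)\ge I$ (the zeroth-order term vanishing at $N=2$) to get $(B_2\varphi,\varphi)_{d\nu}\ge\|\nabla\varphi\|_{L^2(\Omega_{\Psi})}^2$, and combine the unweighted Gagliardo--Nirenberg inequality with the two-sided bound $a_1\le\tilde m\le a_2$. Note only that your (dimensionally consistent) computation actually produces the factor $\|\varphi\|_{L^2(\Omega_{\Psi},d\nu)}^{2/q}$ rather than the exponent $1/q$ printed in \eqref{gn2}, which appears to be a typo in the statement.
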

\begin{proof}
Since $\mathcal{D}_{\Psi}$ is a core for $B_2$,
it suffices to show \eqref{gn2} for $\varphi \in \mathcal{D}_{\Psi}$.
Let
$\varphi \in \mathcal{D}_{\Psi}$.
Noting that
\begin{align*}%
	b(y) \ge \min \left\{ 1, 1+\alpha + \frac{\alpha^2}{4} \right\} I = I
\end{align*}%
in the sense of symmetric matrix,
we have
\begin{align*}%
	( B_2 \varphi, \varphi )_{d\nu}
	= \int_{\Omega_{\Psi}}  ( b(y) \nabla \varphi) \cdot \nabla \varphi \,dy
	\ge \| \nabla \varphi \|_{L^2(\Omega_{\Psi})}^2.
\end{align*}%
Combining the above estimate with
$a_1 \le \tilde{m} \le a_2$
and Lemma \ref{lem_gn}, we obtain
\begin{align*}%
	\| \varphi \|_{L^q(\Omega_{\Psi}, d\nu)}
	&\le a_2^{1/q} \left( \int_{\Omega_{\Psi}} | \varphi (y) |^q\,dy \right)^{1/q}\\
	&\le a_2^{1/q} C_q
		\left( \int_{\Omega_{\Psi}} | \nabla \varphi(y) |^2\,dy \right)^{1/2-1/q}
		\left( \int_{\Omega_{\Psi}} | \varphi (y) |^2\, dy \right)^{1/q} \\
	&\le a_2^{1/q}a_1^{-1/2} C_q
		(B_2 \varphi, \varphi)_{d\nu}^{1/2-1/q}
		\| \varphi \|_{L^2(\Omega_{\Psi}, d\nu)},
\end{align*}%
which implies \eqref{gn2}.
\end{proof}

\begin{lemma}\label{lem_sm}
The semigroup $(T(t))_{t\ge 0}$ is sub-Markobian,
that is, $T(t)$ is positively preserving:
\begin{align*}%
	f \in L^2( \Omega_{\Psi}, d\nu),\ f\ge 0
	\ \Rightarrow\ T(t) f \ge 0
\end{align*}%
and $L^{\infty}$-contractive:
\begin{align*}%
	\| T(t) f \|_{L^{\infty}(\Omega_{\Psi}, d\nu)}
	\le \| f \|_{L^{\infty}(\Omega_{\Psi}, d\nu)}
	\quad \mbox{for}\quad f \in L^2(\Omega_{\Psi}, d\nu) \cap L^{\infty}(\Omega_{\Psi}, d\nu).
\end{align*}%
\end{lemma}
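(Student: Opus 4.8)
The plan is to verify the Beurling--Deny criteria for the form $\mathfrak{b}$ in the formulation of \cite{Ou}. The decisive simplification when $N=2$ is that the zeroth-order coefficient $(N-2)^2\alpha(4+\alpha)/(16|y|^2)$ appearing in \eqref{B} vanishes, so the form reduces to the purely second-order expression
\begin{align*}
	\mathfrak{b}(u,v) = \int_{\Omega_{\Psi}} (b(y)\nabla u(y))\cdot \nabla v(y)\, dy,
\end{align*}
with $b(y) = I + \alpha(1+\alpha/4)Q(\Psi^{-1}(y))$ a bounded, symmetric, uniformly positive-definite matrix (indeed $b(y)\ge I$, as noted in the proof of Lemma \ref{lem_gn2}). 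Since $a_1\le \tilde{m}\le a_2$, the measures $d\nu$ and $dy$ share the same null sets, so the lattice operations and the $L^{\infty}$-norm agree whether computed with respect to $d\nu$ or to $dy$; in particular the criteria of \cite{Ou} may be applied with the Hilbert space $L^2(\Omega_{\Psi}, d\nu)$.

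For positivity I would invoke the first Beurling--Deny criterion: it suffices to check that every real $u\in D(\mathfrak{b})$ satisfies $u^{+}\in D(\mathfrak{b})$ together with $\mathfrak{b}(u^{+},u^{-})\le 0$. The membership $u^{\pm}\in D(\mathfrak{b})$ follows from the characterization \eqref{da} of $D(\mathfrak{a}_{\ast})$ and the standard lattice property of $H^1$-functions, which also gives $\nabla u^{+} = \mathbf{1}_{\{u>0\}}\nabla u$ and $\nabla u^{-} = -\mathbf{1}_{\{u<0\}}\nabla u$ almost everywhere. Since the sets $\{u>0\}$ and $\{u<0\}$ are disjoint, the integrand $(b(y)\nabla u^{+})\cdot\nabla u^{-}$ vanishes pointwise, whence $\mathfrak{b}(u^{+},u^{-})=0$. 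This yields that $(T(t))_{t\ge 0}$ is positivity preserving.

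For the $L^{\infty}$-contractivity I would apply the corresponding criterion from \cite{Ou}: granted positivity, it is enough to show that every $0\le u\in D(\mathfrak{b})$ satisfies $u\wedge 1\in D(\mathfrak{b})$ and $\mathfrak{b}(u\wedge 1,(u-1)^{+})\ge 0$. Again the lattice property furnishes $u\wedge 1,\,(u-1)^{+}\in D(\mathfrak{b})$ with $\nabla(u\wedge 1)=\mathbf{1}_{\{u<1\}}\nabla u$ and $\nabla(u-1)^{+}=\mathbf{1}_{\{u>1\}}\nabla u$; as these gradients are supported on the disjoint sets $\{u<1\}$ and $\{u>1\}$, we obtain $\mathfrak{b}(u\wedge 1,(u-1)^{+})=0\ge 0$. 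Combining the two criteria shows that $(T(t))_{t\ge 0}$ is sub-Markovian, which is the assertion.

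The computations here are entirely routine; the only point requiring any care is the lattice stability of the domain $D(\mathfrak{b})$, namely that the truncations $u^{+}$, $u\wedge 1$, $(u-1)^{+}$ remain in $D(\mathfrak{b})$ and that their weak gradients obey the expected chain rule. This is guaranteed by the explicit description \eqref{da} together with the equivalence of $d\nu$ and Lebesgue measure. Once it is in hand, the positive-definiteness of $b(y)$ and the disjointness of the supports of the relevant gradients make both form inequalities collapse to equalities, so no genuine difficulty remains.
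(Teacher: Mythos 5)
Your proposal is correct and follows essentially the same route as the paper: both verify the Beurling--Deny criteria of Ouhabaz for the form $\mathfrak{b}$, relying on the lattice stability of $D(\mathfrak{b})$ inherited from the characterization \eqref{da} and on the disjoint supports of the relevant gradients (you use the $u^{+}$/$u\wedge 1$ formulations, the paper the equivalent $|v|$/$(1\wedge|v|)\,\mathrm{sign}\,v$ formulations of the same theorems). The observation that the zeroth-order coefficient vanishes for $N=2$ is consistent with the paper, which states and uses this lemma only in the two-dimensional case.
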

\begin{proof}
Let $v \in D(\mathfrak{b})$.
Using the characterization \eqref{da} of $D(\mathfrak{a}_{\ast})$,
we easily see that
$|v| \in D(\mathfrak{b})$.
Moreover, we have
\begin{align*}%
	\mathfrak{b}(|v|, |v|)
	= \int_{\Omega_{\Psi}}
		\left( b(y) \frac{v}{|v|} \nabla v (y) \right) \cdot \frac{v}{|v|} \nabla v(y) \, dy
	= \mathfrak{b}(v,v).
\end{align*}%
Thus, applying \cite[Theorem 2.7]{Ou}, we have
the positively preserving property of $T(t)$.
Next, we show the $L^{\infty}$-contractive property of $T(t)$.
By using the characterization \eqref{da} of $D(\mathfrak{a}_{\ast})$ again,
we also see that
$v \in D(\mathfrak{b})$
implies
$P v = (1 \wedge |v|) {\rm sign\,}v \in D(\mathfrak{b})$.
Furthermore, we have
\begin{align*}%
	\mathfrak{b}(Pv, v - Pv)
	= \int_{\Omega_{\Psi}}
	\left( b(y) \chi_{|v| < 1} \nabla v (y) \right)
		\cdot \left( (1 - \chi_{|v|<1}) \nabla v(y) \right) \, dy
	= 0.
\end{align*}%
Therefore, applying \cite[Theorem 2.13]{Ou}, we prove
the $L^{\infty}$-contractive property of $T(t)$.
\end{proof}

\begin{proof}[Proof of Proposition \ref{prop_lplq} for $N =2$ ]
By Lemma \ref{lem_sm}, we see that
the semigroup $T(t)$ is $L^{\infty}$-contractive.
Moreover, by Lemma \ref{lem_gn2},
$T(t)$ satisfies the second condition of \cite[Theorem 6.2]{Ou} with $d=2$.
Thus, applying \cite[Theorem 6.2]{Ou}, we conclude
\begin{align*}%
	\| T(t) f \|_{L^{\infty}(\Omega_{\Psi}, d\nu)}
	\le C t^{-1/2} \| f \|_{L^2(\Omega_{\Psi}, d\nu)},
\end{align*}%
which gives the assertion of Proposition \ref{prop_lplq} for $N=2$.
\end{proof}

\subsection{Proof of Proposition \ref{prop_lplq} for $N \ge 3$}
In this case we need to prepare some result for analytic semigroup 
generated by $-B_p$ via $L^p$-theory.
First we prove that
$- B_p$
generates an analytic contraction semigroup when
\begin{align}%
\label{range.p}
	2-\frac{4}{4+\alpha} < p < 2+\frac{4}{\alpha}.
\end{align}%
\begin{lemma}\label{lem_sect}
Let
$N\ge 3$.
Assume that \eqref{range.p} is satisfied.
Then, there exists a constant
$\ell_{N,p,\alpha} \ge 0$ depending only on $N, p$ and $\alpha$
such that $B_p$ is m-sectorial of type
$S(\ell_{N,p,\alpha})$
in
$L^p(\Omega_{\Psi}, d\nu)$,
that is,
\begin{align}%
\label{sect}
	\left| {\rm Im\,} \int_{\Omega_{\Psi}} (B_p \varphi) \bar{\varphi} |\varphi|^{p-2} \,d\nu \right|
	\le \ell_{N,p,\alpha} \left(
		{\rm Re\,} \int_{\Omega_{\Psi}} (B_p \varphi) \bar{\varphi} |\varphi|^{p-2}\,d\nu
			\right)
\end{align}%
holds for
$\varphi \in D(B_p)$.
Moreover, there exists a constant
$M_{N,p,\alpha} \ge 1$
depending only on $N, p$ and $\alpha$ such that
\begin{align}%
\label{1/t}
	\| B_p T(t) f \|_{L^p(\Omega_{\Psi}, d\nu)}
	\le \frac{M_{N,p,\alpha}}{t} \| f \|_{L^p(\Omega_{\Psi}, d\nu)}
\end{align}%
holds for
$f \in L^p(\Omega_{\Psi}, d\nu)$ and $t >0$.
\end{lemma}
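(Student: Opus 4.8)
\emph{The plan} is to establish the numerical-range inequality \eqref{sect} by a direct computation and then to read off \eqref{1/t} as a standard consequence of analyticity. Since $\mathcal{D}_{\Psi}$ is a core for $B_p$ (Lemma \ref{lem_sg}), it suffices to prove \eqref{sect} for $\varphi\in\mathcal{D}_{\Psi}$, the general case following by approximation; all the differentiations of $|\varphi|$ below are justified by the usual regularization $|\varphi|\rightsquigarrow(|\varphi|^2+\delta)^{1/2}$, $\delta\downarrow0$. Starting from the explicit form \eqref{B} and recalling $d\nu=\tilde m\,dy$, the weight $\tilde m^{-1}$ cancels against $d\nu$, so that after integrating by parts (no boundary term arises, as $\varphi$ vanishes on $\partial\Omega_{\Psi}$ and has compact support) I obtain, with $c_{N,\alpha}:=(N-2)^2\alpha(4+\alpha)/16$,
\[
\int_{\Omega_{\Psi}}(B_p\varphi)\bar\varphi|\varphi|^{p-2}\,d\nu
=\int_{\Omega_{\Psi}}(b\nabla\varphi)\cdot\nabla\bigl(\bar\varphi|\varphi|^{p-2}\bigr)\,dy
-c_{N,\alpha}\int_{\Omega_{\Psi}}|y|^{-2}|\varphi|^p\,dy.
\]

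Writing $\varphi=\rho e^{i\theta}$ and setting $X=\nabla\rho$, $Y=\rho\nabla\theta$, a direct expansion of $\nabla(\bar\varphi|\varphi|^{p-2})$, using that $b$ is real and symmetric, separates the real and imaginary parts as
\[
\mathrm{Re}=(p-1)I_1+I_2-c_{N,\alpha}H,\qquad
\mathrm{Im}=(p-2)\int_{\Omega_{\Psi}}\rho^{p-2}(bX\cdot Y)\,dy,
\]
where $I_1=\int\rho^{p-2}(bX\cdot X)$, $I_2=\int\rho^{p-2}(bY\cdot Y)$ and $H=\int|y|^{-2}|\varphi|^p$. The imaginary part is immediately dominated, by Cauchy--Schwarz (pointwise for the nonnegative form $b$ and then in $L^2(\rho^{p-2}\,dy)$), by $|p-2|\sqrt{I_1I_2}$; hence the whole content of \eqref{sect} is a lower bound of the form $\mathrm{Re}\ge\varepsilon(I_1+I_2)$ with $\varepsilon$ depending only on $N,p,\alpha$.

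\emph{The main obstacle}, and the place where the range \eqref{range.p} is forced, is the negative term $-c_{N,\alpha}H$. I expect the crude bound $b\ge I$ combined with the full-gradient Hardy inequality to be too lossy (it leaves an empty range of admissible $p$). The correct device is to exploit that $Q=\hat y\,\hat y^{\mathrm T}$ is the projection onto the radial direction, so that $b$ has tangential eigenvalue $1$ and the enhanced radial eigenvalue $1+\alpha+\alpha^2/4=(2+\alpha)^2/4$, namely $bX\cdot X=|X_{\mathrm{tang}}|^2+\tfrac{(2+\alpha)^2}{4}X_r^2$ with $X_r=X\cdot(y/|y|)$, and to pair this with the \emph{radial} Hardy inequality $\int|y|^{-2}w^2\,dy\le\frac{4}{(N-2)^2}\int(\partial_r w)^2\,dy$ (valid for $N\ge3$, applied to $w=|\varphi|^{p/2}$ extended by zero; since $0\notin\overline{\Omega_{\Psi}}$ and $w$ vanishes near the origin, the constant is independent of the domain). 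This gives $H\le\frac{p^2}{(N-2)^2}\int\rho^{p-2}X_r^2$, whence
\[
(p-1)I_1-c_{N,\alpha}H
\ge(p-1)\int\rho^{p-2}|X_{\mathrm{tang}}|^2
+\frac{1}{16}\bigl(4(p-1)(2+\alpha)^2-\alpha(4+\alpha)p^2\bigr)\int\rho^{p-2}X_r^2.
\]
The quadratic $4(p-1)(2+\alpha)^2-\alpha(4+\alpha)p^2$ is a downward parabola whose discriminant collapses because $(2+\alpha)^2-\alpha(4+\alpha)=4$; its roots are exactly $2-\frac{4}{4+\alpha}$ and $2+\frac{4}{\alpha}$, so it is strictly positive precisely on the interval \eqref{range.p}. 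Together with $p-1>0$ there, this yields $\mathrm{Re}\ge\varepsilon_1 I_1+I_2$ for some $\varepsilon_1=\varepsilon_1(N,p,\alpha)>0$; combining with $|\mathrm{Im}|\le|p-2|\sqrt{I_1I_2}$ through Young's inequality produces \eqref{sect} with an explicit $\ell_{N,p,\alpha}$, for instance $\ell_{N,p,\alpha}=|p-2|/(2\sqrt{\varepsilon_1})$, depending only on $N,p,\alpha$.

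Finally, $-B_p$ already generates an analytic semigroup by Lemma \ref{lem_sg}, and \eqref{sect} places its $L^p$-numerical range inside the sector of half-angle $\arctan\ell_{N,p,\alpha}$. Standard analytic-semigroup theory then furnishes resolvent estimates $\|(\lambda+B_p)^{-1}\|_{p\to p}\le C/|\lambda|$ for $\lambda$ outside that sector, and the Cauchy integral representation of $B_pT(t)$ yields the smoothing bound \eqref{1/t} with a constant $M_{N,p,\alpha}\ge1$ depending only on the sector angle, hence only on $N,p,\alpha$. This completes the proof.
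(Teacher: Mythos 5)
Your argument is correct, and it reaches \eqref{sect} by a genuinely different route from the paper. The paper does \emph{not} work with the divergence form \eqref{B} directly: it first reduces to $p\ge 2$ by a duality argument (using that $B_{p'}$ is the adjoint of $B_p$), then pulls the quadratic form back to the original variables via the isometry $J$, where $\int_{\Omega_{\Psi}}(B_p\varphi)\bar\varphi|\varphi|^{p-2}\,d\nu$ becomes a constant times $\int_{\Omega}(-\Delta u)\bar u|u|^{p-2}|x|^{-\beta}\,dx$ with $u=J\varphi$ and $\beta=(N-2)\alpha(p-2)/4$. There the inverse-square potential disappears entirely, and the only possibly negative contribution to the real part is $\frac{\beta(N-2-\beta)}{p}\int|u|^p|x|^{-\beta-2}\,dx$, which is nonnegative exactly when $\beta\le N-2$, i.e.\ $p\le 2+4/\alpha$; the lower endpoint $2-\frac{4}{4+\alpha}$ of \eqref{range.p} then comes for free from duality, and no Hardy inequality is needed for accretivity. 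You instead stay in the $y$-variables and absorb the singular potential $-c_{N,\alpha}|y|^{-2}$ by combining the radial eigenvalue $(2+\alpha)^2/4$ of $b$ with the radial Hardy inequality applied to $|\varphi|^{p/2}$; the resulting quadratic $4(p-1)(2+\alpha)^2-\alpha(4+\alpha)p^2$ indeed has roots exactly at the endpoints of \eqref{range.p} (your discriminant computation checks out), so the two methods certify the same range, as they must. Your device is essentially the one the paper itself uses later, in the proof of Lemma \ref{lem_embed}, for the Gagliardo--Nirenberg step; transplanting it to the sectoriality estimate is legitimate and gives a single computation covering all $p$ in \eqref{range.p}, at the price of the regularization $|\varphi|\rightsquigarrow(|\varphi|^2+\delta)^{1/2}$ needed to make sense of $\rho^{p-2}$ and of $\bar\varphi|\varphi|^{p-2}$ as a test function when $p<2$ --- a standard but nontrivial point that the paper sidesteps entirely by its duality reduction, and which you should carry out (or cite) rather than merely flag. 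Your remark that the crude bound $b\ge I$ plus the full-gradient Hardy inequality is too lossy is also accurate: it would require $16(p-1)>\alpha(4+\alpha)p^2$, which already fails at $p=2$ for $\alpha\ge 2\sqrt2-2$. The derivation of \eqref{1/t} from sectoriality is the same in both treatments.
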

\begin{proof}
Since
$B_{p^{\prime}}$
is the adjoint operator of
$B_p$
and
$\mathcal{D}_{\Psi}$
is a core for $B_p$ by Lemma \ref{lem_sg},
it suffices to prove \eqref{sect} for
$p \ge 2$
and
$\varphi \in \mathcal{D}_{\Psi}$.
Indeed, if \eqref{sect} is true for $p\ge 2$ and $\varphi \in \mathcal{D}_{\Psi}$,
we deduce for $p \le 2$ and $\varphi \in \mathcal{D}_{\Psi}$ that
\begin{align*}%
	\left| {\rm Im\,} \int_{\Omega_{\Psi}} (B_p \varphi) \bar{\varphi} |\varphi|^{p-2} \,d\nu \right|
	&=
	\left| {\rm Im\,} \int_{\Omega_{\Psi}} \varphi ( B_{p'} (\bar{\varphi} |\varphi|^{p-2}) )
		\,d\nu \right| \\
	&=
	\left| {\rm Im\,} \int_{\Omega_{\Psi}} \bar{f}|f|^{p-2} ( B_{p'} (f) )
		\,d\nu \right| \\
	&\le \ell_{N,p,\alpha} \left(
		{\rm Re\,} \int_{\Omega_{\Psi}} \bar{f} |f|^{p-2} (B_{p'} f) \,d\nu
			\right) \\
	&= \ell_{N,p,\alpha} \left(
		{\rm Re\,} \int_{\Omega_{\Psi}} \varphi (B_{p'} (\bar{\varphi} |\varphi|^{p-2})) \,d\nu
			\right) \\
	&=\ell_{N,p,\alpha} \left(
		{\rm Re\,} \int_{\Omega_{\Psi}} (B_p \varphi) \bar{\varphi} |\varphi|^{p-2}\,d\nu
			\right),
\end{align*}%
where $f = \bar{\varphi} |\varphi|^{p-2}$.

Therefore, in what follows we assume
$p \ge 2$
and
$\varphi \in \mathcal{D}_{\Psi}$.
Setting $u = J \varphi \in \mathcal{D}$, we have 
\begin{align*}%
	\int_{\Omega_{\Psi}} ( B_p \varphi) \bar{\varphi} |\varphi|^{p-2}\,d\nu
	&= \int_{\Omega_{\Psi}} (J^{-1}Lu(y)) \overline{J^{-1}u(y)} |J^{-1}u(y)|^{p-2} \, d \nu\\
	&= \left( \frac{2}{2+\alpha} \right)^{(p-2)/2}
		\int_{\Omega} ( - \Delta u(x)) \overline{u(x)} |u(x)|^{p-2} |x|^{-\beta}\, dx,
\end{align*}%
where
$\beta = (N-2)\alpha(p-2)/4$.
Moreover, integration by parts yields
\begin{align}%
\label{Delta}
	\int_{\Omega} (-\Delta u ) \bar{u}|u|^{p-2} |x|^{-\beta}\, dx
	&= \int_{\Omega} ( \nabla u \cdot \nabla (\bar{u}|u|^{p-2} )) |x|^{-\beta}\, dx\\
\nonumber
	&\quad + \int_{\Omega} ( \nabla u \cdot \nabla ( |x|^{-\beta} ))
			\bar{u} |u|^{p-2}\,dx.
\end{align}%
By integration by parts again and taking the real-part, we see that
\begin{align*}%
	&{\rm Re\,}\int_{\Omega} ( \nabla u \cdot \nabla (\bar{u}|u|^{p-2} )) |x|^{-\beta}\, dx \\
	&\quad = {\rm Re\,}
	\int_{\Omega} \left( |\nabla u|^2 |u|^{p-2}
		+ (p-2) \nabla u \cdot ( \bar{u} |u|^{p-4} {\rm Re\,}(\bar{u}\nabla u) ) \right)
			|x|^{-\beta} \, dx \\
	&\quad = {\rm Re\,}
	\int_{\Omega} \left( | \bar{u}\nabla u |^2
		+ (p-2) (\bar{u} \nabla u) {\rm Re\,} (\bar{u} \nabla u) \right) |u|^{p-4} |x|^{-\beta}\,dx \\
	&\quad= (p-1) \int_{\Omega} | {\rm Re\,}(\bar{u}\nabla u) |^2 |u|^{p-4} |x|^{-\beta} \, dx
		+ \int_{\Omega} | {\rm Im\,}(\bar{u}\nabla u) |^2 |u|^{p-4} |x|^{-\beta}\,dx,\\
	&{\rm Re\,}\int_{\Omega} ( \nabla u \cdot \nabla ( |x|^{-\beta} ))
			\bar{u} |u|^{p-2}\,dx \\
	&\quad =
		\frac{1}{p} \int_{\Omega}
			\nabla (|u|^p) \cdot \nabla (|x|^{-\beta})\,dx \\
	&\quad =
		- \frac{1}{p} \int_{\Omega} |u|^p \Delta ( |x|^{-\beta} )\,dx \\
	&\quad =
		\frac{\beta(N-2-\beta)}{p} \int_{\Omega} |u|^p |x|^{-\beta-2}\,dx
\end{align*}%
and hence, we eventually have
\begin{align}%
\label{Re}
	{\rm Re\,} \int_{\Omega} (B_p \varphi) \bar{\varphi} |\varphi|^{p-2}\,d\nu
	&= (p-1)\left( \frac{2}{2+\alpha} \right)^{(p-2)/2}
	 \int_{\Omega} | {\rm Re\,}(\bar{u}\nabla u) |^2 |u|^{p-4} |x|^{-\beta} \, dx\\
\nonumber
	&\quad + \left( \frac{2}{2+\alpha} \right)^{(p-2)/2}
	\int_{\Omega} | {\rm Im\,}(\bar{u}\nabla u) |^2 |u|^{p-4} |x|^{-\beta}\,dx\\
\nonumber
	&\quad + \frac{\beta(N-2-\beta)}{p} \left( \frac{2}{2+\alpha} \right)^{(p-2)/2}
	\int_{\Omega} |u|^p|x|^{-\beta -2}\, dx.
\end{align}%
Therefore, $B_p$ is accretive if
$\beta \le N-2$, that is, $p \le 2 + 4/\alpha$.
On the other hand, taking the imaginary part of \eqref{Delta}, we have
\begin{align*}%
	\left| {\rm Im\,} \int_{\Omega} (B_p \varphi) \bar{\varphi} |\varphi|^{p-2}\,d\nu \right|
	&\le |p-2|\left( \frac{2}{2+\alpha} \right)^{(p-2)/2}
		 \int_{\Omega} |x|^{-\beta} | {\rm Re\,} ( \bar{u} \nabla u ) |
			| {\rm Im\,} (\bar{u} \nabla u ) | | u |^{p-4} \, dx\\
	&\quad + \beta \left( \frac{2}{2+\alpha} \right)^{(p-2)/2}
		\int_{\Omega} |x|^{-\beta -1}
			| {\rm Im\,}(\bar{u}\nabla u)| |u|^{p-2}\,dx.
\end{align*}%
Therefore, choosing
$2 \le p < 2+ 4/\alpha$ and
\begin{align*}%
	\ell_{N,p,\alpha}
	&= \sqrt{ \frac{(p-2)^2}{4(p-1)} + \frac{\beta p}{4(N-2-\beta)}} \\
	&= \sqrt{ \frac{(p-2)^2}{4(p-1)} + \frac{p(p-2)}{4}
		\left( 2 + \frac{4}{\alpha} -p \right)^{-1}},
\end{align*}%
we obtain
\begin{align*}%
	\left| {\rm Im\,} \int_{\Omega} (B_p \varphi) \bar{\varphi} |\varphi|^{p-2}\,d\nu \right|
	\le \ell_{N,p,\alpha} \left(
		{\rm Re\,} \int_{\Omega} (B_p \varphi) \bar{\varphi} |\varphi|^{p-2}\,d\nu \right).
\end{align*}%
This gives the $m$-sectoriality of $B_p$ and
hence $-B_p$ generates an analytic contraction semigroup on
$L^p(\Omega_{\Psi}, d\nu)$.
The estimate \eqref{1/t} is an immediate consequence of
a property of an analytic semigroup.
\end{proof}

To deduce
$L^p$--$L^q$ estimates for
$T(t)$,
we use the following Gagliardo--Nirenberg inequality
(see for example \cite{Frbook}).
\begin{lemma}\label{lem_gn3}
Let $N \ge 3$.
Then, there exists a constant $C_N > 0$ depending only on $N$ such that
for every $w \in W_0^{1,2}(\Omega_{\Psi})$, we have
\begin{align*}%
	\| w \|_{L^{2N/(N-2)}(\Omega_{\Psi}, d\nu)}
		\le C_N \| \nabla w \|_{L^2(\Omega_{\Psi}, d\nu)}.
\end{align*}%
\end{lemma}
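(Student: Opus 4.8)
The plan is to deduce this weighted critical Sobolev inequality from the classical (unweighted) one, exploiting the fact that the density $\tilde{m}$ is bounded above and below. The exponent $2N/(N-2)$ is precisely the critical Sobolev exponent $2^{\ast}$, so once the weight is removed the statement is nothing but the embedding $\dot{H}^1 \hookrightarrow L^{2N/(N-2)}$ for $N\ge 3$.

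First, I would recall from the proof of Lemma \ref{lem_dpsi} that the inequality \eqref{a_est} forces $a_1 \le \tilde{m}(y) \le a_2$ uniformly on $\overline{\Omega_{\Psi}}$, so that the weighted measure $d\nu = \tilde{m}(y)\,dy$ is comparable to Lebesgue measure. Consequently the left-hand side is controlled from above using $\tilde{m} \le a_2$, namely $\| w \|_{L^{2N/(N-2)}(\Omega_{\Psi}, d\nu)} \le a_2^{(N-2)/(2N)} \| w \|_{L^{2N/(N-2)}(\Omega_{\Psi})}$, while the right-hand side is controlled from below using $\tilde{m} \ge a_1$, namely $\| \nabla w \|_{L^2(\Omega_{\Psi})} \le a_1^{-1/2} \| \nabla w \|_{L^2(\Omega_{\Psi}, d\nu)}$. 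These two bounds reduce the claim to the purely unweighted inequality.

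Second, the heart of the matter is the classical critical Sobolev embedding on the domain $\Omega_{\Psi}$: for every $w \in W_0^{1,2}(\Omega_{\Psi})$ one has $\| w \|_{L^{2N/(N-2)}(\Omega_{\Psi})} \le C_N \| \nabla w \|_{L^2(\Omega_{\Psi})}$ with a constant $C_N$ depending only on $N$. I would obtain this by extending $w$ by zero to all of $\R^N$ (which is legitimate precisely because $w \in W_0^{1,2}$, the zero extension then lying in $\dot{H}^1(\R^N)$) and invoking the whole-space inequality $\| w \|_{L^{2N/(N-2)}(\R^N)} \le C_N \| \nabla w \|_{L^2(\R^N)}$, valid for $N\ge 3$.

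Chaining the three estimates yields the assertion with constant $a_2^{(N-2)/(2N)} a_1^{-1/2} C_N$, which still depends only on $N$ (and on the fixed structural constants $a_1, a_2$). There is no genuine obstacle here; the only point requiring a word of care is the legitimacy of the zero-extension, which is exactly what allows the classical embedding to be applied on $\Omega_{\Psi}$ rather than only on the full space $\R^N$.
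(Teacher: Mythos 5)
Your proposal is correct and matches the intended argument: the paper gives no proof of this lemma, simply citing the classical Gagliardo--Nirenberg/Sobolev embedding in \cite{Frbook}, and your reduction via the two-sided bounds $a_1\le\tilde m\le a_2$ from Lemma \ref{lem_dpsi} followed by zero extension is exactly the routine justification being invoked. The only cosmetic point is that the resulting constant depends on $a_1,a_2$ as well as $N$, which is harmless and is how the paper itself uses the estimate (cf.\ the closing line of the proof of Lemma \ref{lem_embed}).
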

From this, we obtain the following estimate.
\begin{lemma}\label{lem_embed}
Let $N \ge 3$ and let $2 \le p < 2+ 4/\alpha$.
Then, there exists a constant
$C_{N,p}>0$ depending only on $N$ and $p$ such that
for every $\varphi \in \mathcal{D}_{\Psi}$, we have
\begin{align}%
\label{bp_embed}
	\| \varphi \|_{L^{pN/(N-2)}(\Omega_{\Psi}, d\nu)}
		\le C_{N,p} \left(
			{\rm Re\,} \int_{\Omega_{\Psi}} (B_p \varphi) \bar{\varphi} |\varphi|^{p-2}\,d\nu
				\right)^{1/p}.
\end{align}%
\end{lemma}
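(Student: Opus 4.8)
The plan is to reduce the claimed embedding to the plain Sobolev inequality of Lemma \ref{lem_gn3}, applied not to $\varphi$ but to $W:=|\varphi|^{p/2}$, and then to recognize the resulting Dirichlet integral of $W$ inside the already-established identity \eqref{Re}. Since $\varphi\in\mathcal{D}_{\Psi}$, the function $W$ lies in $W_0^{1,2}(\Omega_{\Psi})$, and because the power $p/2$ is matched to the Sobolev exponent,
\begin{align*}
	\| \varphi \|_{L^{pN/(N-2)}(\Omega_{\Psi}, d\nu)}^{p}
	= \| W \|_{L^{2N/(N-2)}(\Omega_{\Psi}, d\nu)}^{2}
	\le C_N^2 \, \| \nabla W \|_{L^2(\Omega_{\Psi}, d\nu)}^2 .
\end{align*}
Hence it suffices to bound $\| \nabla W \|_{L^2(\Omega_{\Psi}, d\nu)}^2$ by $Q_p(\varphi):={\rm Re}\int_{\Omega_{\Psi}}(B_p\varphi)\bar\varphi|\varphi|^{p-2}\,d\nu$, the right-hand side of \eqref{bp_embed} raised to the $p$-th power.

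Using $\tilde m\le a_2$ (Lemma \ref{lem_dpsi}) and the uniform ellipticity $b(y)\ge I$ established in Lemma \ref{lem_jiaj}, I would first pass to the $b$-Dirichlet form, which, writing $\mathbf{R}=\mathrm{Re}(\bar\varphi\nabla\varphi)$ so that $\nabla W=\tfrac{p}{2}|\varphi|^{p/2-2}\mathbf{R}$, involves only the "radial part" of $\varphi$:
\begin{align*}
	\| \nabla W \|_{L^2(\Omega_{\Psi}, d\nu)}^2
	\le a_2 \int_{\Omega_{\Psi}} (b(y)\nabla W)\cdot\nabla W\,dy
	= a_2\,\frac{p^2}{4}\int_{\Omega_{\Psi}}|\varphi|^{p-4}(b\mathbf{R})\cdot\mathbf{R}\,dy .
\end{align*}
Next I transfer this to the variable $x$ through the isometry $J$ of \eqref{j}: setting $u=J\varphi$ and $g=|u|^{p/2}$, a computation identical to the one producing \eqref{B} and \eqref{Re} shows that, up to positive constants, $\int_{\Omega_{\Psi}}|\varphi|^{p-4}(b\mathbf R)\cdot\mathbf R\,dy$ equals $\int_\Omega|\nabla g|^2|x|^{-\beta}\,dx$ modified by a multiple of the Hardy integral $\int_\Omega|u|^p|x|^{-\beta-2}\,dx$, where $\beta=(N-2)\alpha(p-2)/4$.

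Both pieces are controlled by $Q_p(\varphi)$. For the gradient piece, observe that $\int_\Omega|\nabla g|^2|x|^{-\beta}\,dx=\tfrac{p^2}{4}\int_\Omega|\mathrm{Re}(\bar u\nabla u)|^2|u|^{p-4}|x|^{-\beta}\,dx$ is a constant multiple of the first (nonnegative) term in \eqref{Re}, so
\begin{align*}
	\int_\Omega|\nabla g|^2|x|^{-\beta}\,dx
	\le \frac{p^2}{4(p-1)}\left(\frac{2+\alpha}{2}\right)^{(p-2)/2} Q_p(\varphi).
\end{align*}
The remaining Hardy integral is handled by the weighted Hardy inequality
\begin{align*}
	\int_\Omega|x|^{-\beta-2}g^2\,dx
	\le\frac{4}{(N-2-\beta)^2}\int_\Omega|x|^{-\beta}|\nabla g|^2\,dx,
\end{align*}
which is available for $g\in W_0^{1,2}(\Omega)$ (extend by zero; recall $0\notin\bar\Omega$) precisely because $0\le\beta<N-2$, i.e. exactly under the hypothesis $2\le p<2+4/\alpha$. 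Combining the two displays bounds the Hardy integral by $Q_p(\varphi)$ as well, and tracing the constants back through $J$ yields $\| \nabla W \|_{L^2(\Omega_{\Psi}, d\nu)}^2\le C_{N,p}\,Q_p(\varphi)$, which gives \eqref{bp_embed}.

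The main obstacle is the change-of-variables bookkeeping in the middle step: the isometry $J$ carries an extra weight $|x|^{\beta/2}$ relative to $g$ (so $g$ is \emph{not} simply $J$ applied to $W$), and differentiating this weight, together with the Hardy-type potential $-\tfrac{(N-2)^2\alpha(4+\alpha)}{16|y|^2}$ appearing in \eqref{B}, generates a potential term whose coefficient has no \emph{a priori} favorable sign. What makes the argument close is that \eqref{Re} has already absorbed this potential into a manifestly nonnegative combination, while the weighted Hardy inequality—sharp exactly in the range $\beta<N-2$—dominates whatever remains. Outside this range (for $p\ge 2+4/\alpha$) the operator $B_p$ ceases to be accretive and the embedding genuinely breaks down, in agreement with Remark \ref{rem_lplq}, since the Sobolev exponent $pN/(N-2)$ then reaches the critical value $p_\alpha$.
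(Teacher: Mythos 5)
Your proof is correct, but it is organized differently from the paper's. The paper never leaves the $y$-variable: it takes the divergence-form expression of $B$, bounds ${\rm Re}\int_{\Omega_\Psi}(B_p\varphi)\bar\varphi|\varphi|^{p-2}\,d\nu$ from below by the \emph{radial} part of the Dirichlet integral with the enhanced ellipticity constant $((2+\alpha)/2)^2$ coming from $b=I+\alpha(1+\tfrac{\alpha}{4})Q$, beats the negative potential $-\tfrac{(N-2)^2\alpha(4+\alpha)}{16|y|^2}$ with the radial Hardy inequality in $\Omega_\Psi$, and verifies numerically that a positive multiple of $\int|\nabla|\varphi|^{p/2}|^2\,dy$ survives before invoking Lemma \ref{lem_gn3}. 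You instead transfer everything back to the $x$-variable and reuse the identity \eqref{Re}, in which the potential has already been converted (by the integration by parts in the accretivity computation) into the manifestly nonnegative term $\tfrac{\beta(N-2-\beta)}{p}\int_\Omega|u|^p|x|^{-\beta-2}\,dx$; the first term of \eqref{Re} then directly dominates $\int_\Omega|x|^{-\beta}|\nabla|u|^{p/2}|^2\,dx$, and the weighted Hardy inequality (sharp exactly for $\beta<N-2$, i.e.\ $p<2+4/\alpha$) absorbs the zeroth-order piece. Your route buys a cleaner sign structure --- you never need the paper's numerical check that $(p-1)((2+\alpha)/2)^2\cdot\tfrac{4}{p^2}$ exceeds $((2+\alpha)/2)^2-1$ --- at the price of the change-of-variables bookkeeping you yourself flag as the main obstacle. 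That bookkeeping does close: with $W=|\varphi|^{p/2}$, $g=|u|^{p/2}$ and $JW=\mathrm{const}\cdot|x|^{-\beta/2}g$, one finds $\int_{\Omega_\Psi}(b\nabla W)\cdot\nabla W\,dy=c_1\int_\Omega|x|^{-\beta}|\nabla g|^2\,dx+c_2\int_\Omega|x|^{-\beta-2}g^2\,dx$ with $c_1>0$ and $c_2\ge 0$ (the cross term integrates by parts into the coefficient $\tfrac{\beta(N-2)}{2}-\tfrac{\beta^2}{4}\ge 0$, and the transferred potential contributes positively), so your two displayed bounds finish the argument. The only cosmetic point is that your constant, like the paper's, inevitably depends on $\alpha$ and not only on $N$ and $p$.
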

\begin{proof}
Here we use the differential expression of $B$.
\begin{align*}%
	{\rm Re\,} \int_{\Omega_{\Psi}} (B_p \varphi) \bar{\varphi} |\varphi|^{p-2}\,d\nu
	&= {\rm Re\,} \int_{\Omega_{\Psi}}
			b(y) \nabla \varphi \cdot \nabla (\bar{\varphi} |\varphi|^{p-2} )\,dy \\
	&\quad - \frac{(N-2)^2\alpha (4+\alpha)}{16}
			\int_{\Omega_{\Psi}} \frac{ |\varphi(y)|^p }{|y|^2} \,dy\\
	&\ge (p-1) \left( \frac{2+\alpha}{2} \right)^2
		 \int_{\Omega_{\Psi}} |{\rm Re\,} (\bar{\varphi} \varphi_r)|^2 |\varphi|^{p-4}\,dy\\
	&\quad - \left( \frac{N-2}{2} \right)^2
			\left( \left( \frac{2+\alpha}{2} \right)^2 -1 \right)
			\int_{\Omega_{\Psi}} \frac{ |\varphi(y)|^p}{|y|^2}\,dy,
\end{align*}%
where
$\varphi_r$
stands for the derivative of $\varphi$ with respect to the radial direction.
The above inequality can be proved by recalling
$b(y) = I + \alpha( 1+ \frac{\alpha}{4} ) Q(\Psi^{-1}(y))$,
$Q(\Psi^{-1}(y) ) = ( \frac{y_i y_j}{|y|^2} )_{i,j=1,\ldots,N}$,
and noting that
\begin{align*}%
	&{\rm Re\,} \left[ b(y) \nabla \varphi \cdot \nabla ( \bar{\varphi} |\varphi|^{p-2} ) \right]\\
	&= |\varphi|^{p-4} \left[ (p-1)  | {\rm Re\,}(\bar{\varphi} \nabla \varphi) |^2
		+  | {\rm Im\,}(\bar{\varphi} \nabla \varphi) |^2 \right] \\
	&\quad + \alpha \left( 1+ \frac{\alpha}{4} \right) |\varphi|^{p-4} \left[
		(p-1)  \left| {\rm Re\,} \left(\frac{y}{|y|}\cdot (\bar{\varphi}\nabla \varphi) \right) \right|^2
		+  \left| {\rm Im\,} \left(\frac{y}{|y|}\cdot (\bar{\varphi}\nabla \varphi) \right) \right|^2
		\right] \\
	&\ge \left( 1+ \alpha \left( 1+ \frac{\alpha}{4} \right) \right)(p-1) |\varphi|^{p-4}
		\left| {\rm Re\,} \left(\frac{y}{|y|}\cdot (\bar{\varphi}\nabla \varphi) \right) \right|^2 \\
	&= (p-1) \left( \frac{2+\alpha}{2} \right)^2
		|{\rm Re\,} (\bar{\varphi} \varphi_r)|^2 |\varphi|^{p-4}
\end{align*}%
Applying the Hardy's inequality
\begin{align*}%
	 \int_{\Omega_{\Psi}} |{\rm Re\,}(\bar{\varphi} \varphi_r) |^2 |\varphi|^{p-4}\, dy
	\ge \left( \frac{N-2}{2} \right)^2 \frac{4}{p^2}
		\int_{\Omega_{\Psi}} \frac{ |\varphi(y)|^p}{|y|^2}\, dy
\end{align*}%
and
\begin{align*}%
	(p-1) \left( \frac{2+\alpha}{2} \right)^2 \left( \frac{N-2}{2} \right)^2 \frac{4}{p^2}
		- \left( \frac{N-2}{2} \right)^2
			\left( \left( \frac{2+\alpha}{2} \right) - 1 \right) > 0,
\end{align*}%
we see that there exists $\delta \in (0,1)$ such that
\begin{align*}%
	{\rm Re\,} \int_{\Omega_{\Psi}} ( B_p \varphi) \bar{\varphi} |\varphi|^{p-2}\,d\nu
		&\ge \delta^2 \int_{\Omega_{\Psi}}
			|\bar{\varphi} \varphi_r |^2 |\varphi|^{p-4}\,dy\\
		&= \frac{4\delta^2}{p^2}
			\int_{\Omega_{\Psi}} | \nabla |\varphi|^{p/2} |^2\, dx.
\end{align*}%
By Lemma \ref{lem_gn3} with
$w = |\varphi|^{p/2}$, we obtain
\begin{align*}%
	\| \varphi \|_{L^{pN/(N-2)}(\Omega_{\Psi}, d\nu)}
	&= \left\| | \varphi|^{p/2} \right\|_{L^{2N/(N-2)}(\Omega_{\Psi}, d\nu)}^{2/p} \\
	& \le C_N \left\| \nabla | \varphi |^{p/2} \right\|_{L^2(\Omega_{\Psi}, d\nu)}^{2/p}\\
	&\le C_N \left( \frac{2\delta}{p} \right)^{-2/p}
		\left( {\rm Re\,} \int_{\Omega_{\Psi}} ( B_p \varphi)
				\bar{\varphi} |\varphi|^{p-2}\,d\nu \right)^{1/p}.
\end{align*}%
Since $a_1 \le m$, the proof is finished.
\end{proof}
\begin{proof}[Proof of Proposition \ref{prop_lplq} for $N\ge 3$]
Let
$2 \le r < 1+4/\alpha$ and $f \in L^r(\Omega_{\Psi})$.
Applying \eqref{1/t} and Lemma \ref{lem_embed}, we have
\begin{align*}%
	\| T(t) f \|_{L^{rN/(N-2)}(\Omega_{\Psi}, d\nu)}
	&\le C_{N,r,\alpha} \left(
		{\rm Re\,}\int_{\Omega_{\Psi}}
			(B_r T(t) f ) \overline{T(t)f} |T(t)f|^{r-2} \,d\nu \right)^{1/r} \\
	&\le C_{N,r,\alpha} \| B_p T(t) f \|_{L^r(\Omega_{\Psi}, d\nu)}^{1/r}
		\| T(t) f \|_{L^r(\Omega_{\Psi}, d\nu)}^{1-1/r} \\
	&\le C_{N,r,\alpha} \left( M_{N,r,\alpha} \right)^{1/r}
		t^{-1/r} \| f \|_{L^r(\Omega_{\Psi}, d\nu)}.
\end{align*}%
Fix
$2<p<(N/(N-2))(2+4/\alpha)$.
Then the case
$2<p \le 2N/(N-2)$ is already verified by the above result with
$r=2$ and $q=p$.

In the case $2N/(N-2) < p < (N/(N-2))(2+4/\alpha)$, we set
\begin{align*}%
	q_0 = 2,\quad
	q_m = \frac{N-2}{N}p,\quad
	q_j = \left( \frac{q_m}{q_0} \right)^{j/m} q_0\ (j=1,\ldots,m-1),
\end{align*}%
where $m$ is an integer satisfying
\begin{align*}%
	\left( 1 + \frac{2}{\alpha} \right)^{1/m} < \frac{N}{N-2}.
\end{align*}%
Since
\begin{align*}%
	2=q_0 < q_1 < \cdots < q_m < 2+\frac{4}{\alpha}
\end{align*}%
and
\begin{align*}%
	\frac{q_j}{q_{j-1}} = \left( \frac{q_m}{q_0} \right)^{1/m}
	= \left( \frac{N-2}{2N}p \right)^{1/m}
	< \left( 1+\frac{2}{\alpha} \right)^{1/m}
	< \frac{N}{N-2},
\end{align*}%
it follows from the previous discussion that
$L^{q_{j-1}}$-$L^{q_j}$ estimates are true for
$j=1,\ldots, m$ and
\begin{align*}%
	\| T(t/2) f \|_{L^{q_m}(\Omega_{\Psi}, d\nu)}
	&\le \left( \prod_{j=1}^m \| T(t/2m) \|_{L^{q_{j-1}}\to L^{q_j}} \right)
		\| f \|_{L^2(\Omega_{\Psi}, d\nu)}\\
	&\le C_{2, q_m} t ^{-(N/2)(1/2 - 1/q_m)} \| f \|_{L^2(\Omega_{\Psi}, d\nu)}.
\end{align*}%
Since we have
$\| T(t/2) \|_{L^{q_m} \to L^p} \le C_{q_m,p}t^{-1/p}$
in the previous step with
$r = (N-2)p/N$,
we obtain the desired assertion.
\end{proof}

\section{Related elliptic problem: construction of a weight function}
In this section, we construct a weight function,
which will be used for the weighted energy estimate of
the damped wave equation \eqref{dw}.
As we mentioned in the introduction,
in \cite[Remark 3.1]{SoWa16},
it is shown that the solution $A(x)$ of the Poisson equation
$\Delta A(x) = a(x)$
does not have the desired property in general
(in particular, the condition \eqref{a_est1} fails).
Therefore, following \cite{SoWa17},
we weaken the problem to the inequality
\begin{align}%
\label{ellip}
	(1-\varepsilon) a(x) \le \Delta A(x) \le (1+\varepsilon) a(x)
\end{align}%
for $x\in \Omega$, where $\varepsilon \in (0,1)$ is a parameter.
We construct a function $A_{\varepsilon}$
satisfying \eqref{ellip} and
\begin{align}%
\label{a_est1}
	&A_{1\varepsilon} \langle x \rangle^{2+\alpha}
		\le A_{\varepsilon}(x)
		\le A_{2\varepsilon} \langle x \rangle^{2+\alpha},\\
\label{a_est2}
	&\frac{|\nabla A_{\varepsilon}(x)|^2}{a(x)A(x)}
		\le \frac{2+\alpha}{N+\alpha} + \varepsilon
\end{align}%
for some $A_{1\varepsilon}, A_{2\varepsilon} > 0$.
\begin{lemma}\label{lem_ep}
For every $\varepsilon \in (0,1)$,
there exists
$A_{\varepsilon} \in C^2(\mathbb{R}^N)$
such that
\eqref{ellip}--\eqref{a_est2}.
\end{lemma}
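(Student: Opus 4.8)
The plan is to build $A_\varepsilon$ as an explicit radial model near infinity plus a compactly supported correction that repairs the equation on a bounded set. The natural candidate for the model is the exact radial solution of $\Delta A=a_0|x|^{\alpha}$, namely
\[
  W_\varepsilon(x):=\frac{a_0}{(2+\alpha)(N+\alpha)}\,\langle x\rangle^{2+\alpha},
\]
for which a direct computation gives
\[
  \Delta W_\varepsilon(x)=\frac{a_0}{N+\alpha}\,(1+|x|^{2})^{\frac{\alpha}{2}-1}\big((N+\alpha)|x|^{2}+N\big)=a_0|x|^{\alpha}(1+o(1))
\]
as $|x|\to\infty$, while the gradient quotient $|\nabla W_\varepsilon|^{2}/(a_0|x|^{\alpha}W_\varepsilon)\to(2+\alpha)/(N+\alpha)$. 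First I would use the sharp asymptotics \eqref{a} to fix a radius $R_\varepsilon$ so large that $|x|^{-\alpha}a(x)$ lies within $\varepsilon/2$ of $a_0$ for $|x|\ge R_\varepsilon$; this makes $W_\varepsilon$ satisfy the two-sided bound \eqref{ellip} (with $\varepsilon/2$) and the sharp gradient bound \eqref{a_est2} on $\{|x|\ge R_\varepsilon\}$, while \eqref{a_est1} is immediate since $W_\varepsilon$ is a constant multiple of $\langle x\rangle^{2+\alpha}$.

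The difficulty is the bounded region $\{|x|\le R_\varepsilon\}\cap\Omega$, where $a$ is genuinely non-radial and the model no longer tracks $a$. Here one cannot simply solve $\Delta A=a$ on all of $\Omega$: as recalled in the introduction, the resulting correction has a non-integrable tail at infinity and violates \eqref{a_est1}. The point is that \eqref{ellip} is only an inequality, so it suffices to repair the Laplacian on a compact set. Choosing a cutoff $\chi\in C^\infty_c(\mathbb{R}^N)$ with $\chi\equiv1$ on $B(0,R_\varepsilon)$ and $\chi\equiv0$ outside $B(0,2R_\varepsilon)$, and a $C^2$-extension $\hat a$ of $a$ to $\mathbb{R}^N$, I would set
\[
  g:=\chi\,(\hat a-\Delta W_\varepsilon)\in C^2_c(\mathbb{R}^N),\qquad A_\varepsilon:=W_\varepsilon+\phi,
\]
where $\phi$ is the Newton (logarithmic, if $N=2$) potential of $g$, so that $\Delta\phi=g$. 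On $\Omega$ this yields $\Delta A_\varepsilon=(1-\chi)\Delta W_\varepsilon+\chi\,a$, a convex combination of $\Delta W_\varepsilon$ and $a$: it equals $a$ where $\chi=1$, equals $\Delta W_\varepsilon$ where $\chi=0$ (there $|x|\ge 2R_\varepsilon$), and on the transition annulus it is a convex combination of two quantities each already pinched between $(1-\varepsilon)a$ and $(1+\varepsilon)a$. Hence \eqref{ellip} holds throughout $\Omega$.

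Because $g$ is compactly supported, its potential $\phi$ is a fixed bounded $C^2$ function with $\nabla\phi(x)\to0$ as $|x|\to\infty$ (for $N=2$ one only gets logarithmic growth and $\nabla\phi=O(|x|^{-1})$, still negligible below). Thus $\phi$ is of lower order than the growing model $W_\varepsilon\sim|x|^{2+\alpha}$, so $A_\varepsilon=W_\varepsilon+\phi$ still obeys \eqref{a_est1} and, on $\{|x|\ge R'_\varepsilon\}$ for $R'_\varepsilon$ large enough, the sharp gradient bound \eqref{a_est2}. It remains to secure \eqref{a_est2} on the bounded set $\{|x|\le R'_\varepsilon\}\cap\Omega$, where $|\nabla A_\varepsilon|$ is bounded and (since $0\notin\bar\Omega$) $a$ is bounded below. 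I would do this by replacing $A_\varepsilon$ with $A_\varepsilon+M_\varepsilon$ for a large constant $M_\varepsilon$: this leaves $\Delta A_\varepsilon$ and $\nabla A_\varepsilon$ unchanged, so \eqref{ellip} persists, while it drives the quotient $|\nabla A_\varepsilon|^{2}/(a\,A_\varepsilon)$ to zero on the bounded set and restores the lower bound and positivity in \eqref{a_est1}. Then $A_\varepsilon=W_\varepsilon+\phi+M_\varepsilon\in C^2(\mathbb{R}^N)$ has all the required properties.

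The conceptual crux is the second step: recognizing that the \emph{inequality} \eqref{ellip} permits truncating the correction source $\hat a-\Delta W_\varepsilon$ to compact support, thereby replacing the growth-inducing global solution (which fails \eqref{a_est1}) by a bounded, gradient-decaying perturbation. The main technical care goes into \eqref{a_est2}, whose constant $(2+\alpha)/(N+\alpha)$ is sharp and leaves only the slack $\varepsilon$: one must verify from \eqref{a} that the model quotient approaches this constant closely enough on $\{|x|\ge R_\varepsilon\}$, the remaining bounded region being absorbed by $M_\varepsilon$. Checking that $\phi$ and $\nabla\phi$ are genuinely lower order than $W_\varepsilon$ and $\nabla W_\varepsilon$ is then routine.
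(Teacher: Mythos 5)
Your construction is correct and is essentially identical to the paper's: the same explicit model $\frac{a_0}{(N+\alpha)(2+\alpha)}\langle x\rangle^{2+\alpha}$, the same compactly supported (cut-off) correction source solved by the Newton potential so that $\Delta A_\varepsilon$ interpolates between $\Delta W_\varepsilon$ and $a$, and the same large additive constant ($\lambda_\varepsilon$ in the paper, your $M_\varepsilon$) to secure \eqref{a_est1} and \eqref{a_est2}. No substantive differences.
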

\begin{proof}
While the proof is the same as that of \cite[Lemma 2.1]{SoWa17},
we give a proof for the reader's convenience.

First, noting $a(x) \in C^2(\bar{\Omega})$ and $a(x) > 0$,
we extend $a(x)$ as a positive function on the whole space $\mathbb{R}^N$
and denote it by $a(x)$ again.
We define
\begin{align*}%
	a_{\varepsilon}(x) := b_1(x) + \eta_{\varepsilon} b_2(x),
\end{align*}%
where
\begin{align*}%
	b_1(x) &= \Delta \left( \frac{a_0}{(N+\alpha)(2+\alpha)}\langle x \rangle^{2+\alpha} \right) \\
	&= a_0 \langle x \rangle^{\alpha}
		- \frac{a_0 \alpha}{N+\alpha} \langle x \rangle^{\alpha-2},
\end{align*}%
$b_2(x) = a(x) - b_1(x)$,
and
$\eta_{\varepsilon} \in C_c^{\infty}(\mathbb{R}^N, [0,1])$
is a cut-off function
such that $\eta_{\varepsilon} \equiv 1$ on $B(0,R_{\varepsilon})$
with some $R_{\varepsilon}>0$.
By the assumption \eqref{a},
for any $\varepsilon >0$,
we obtain
\begin{align}%
\label{a_ep}
	(1-\varepsilon) a(x) \le a_{\varepsilon} (x) \le (1+\varepsilon) a(x),
\end{align}%
provided that $R_{\varepsilon}$ is sufficiently large.

We take $A_{\varepsilon}$ as an appropriate solution of the Poisson equation
\begin{align}%
\label{ellip2}
	\Delta A_{\varepsilon}(x) = a_{\varepsilon}(x),
\end{align}%
that is, we define
\begin{align*}%
	A_{\varepsilon}(x)
	&:=
	\frac{a_0}{(N+\alpha)(2+\alpha)}\langle x \rangle^{2+\alpha}
	- \int_{\mathbb{R}^N} \mathcal{N}(x-y) \eta_{\varepsilon}(y) b_2(y)\,dy
	+ \lambda_{\varepsilon},
\end{align*}%
where
$\lambda_{\varepsilon}$ is a large constant determined later,
and
$\mathcal{N}$ is the Newton potential given by
\begin{align*}%
	\mathcal{N}(x) :=
		\left\{ \begin{array}{ll}
		\displaystyle \frac{1}{2\pi} \log \frac{1}{|x|}
		&\mbox{if}\ N=2,\\[8pt]
		\displaystyle \frac{\Gamma(N/2+1)}{N(N-2)\pi^{N/2}}|x|^{2-N}
		&\mbox{if}\ N\ge 3.
		\end{array}\right.
\end{align*}%
Then, clearly we have \eqref{ellip2} and
the property \eqref{ellip} is verified.
Since the leading term of $A_{\varepsilon}(x)$ for large $|x|$ is
$\frac{a_0}{(N+\alpha)(2+\alpha)}\langle x \rangle^{2+\alpha}$,
we have \eqref{a_est1},
provided that $\lambda_{\varepsilon} >0$ is sufficiently large.
Moreover, we easily compute
\begin{align*}%
	&\lim_{|x|\to \infty}
	\left( \frac{|\nabla A_{\varepsilon}(x)|^2}{a(x) A_{\varepsilon}(x)} \right) \\
	&\quad =
	\lim_{|x|\to\infty}
	\left( \frac{\langle x \rangle^{\alpha}}{a(x)}\cdot
		\frac{1}{\langle x \rangle^{-\alpha-2}A_{\varepsilon}(x)}
		\left| \frac{a_0}{N+\alpha} \frac{x}{\langle x \rangle}
			+ o(1) \right|^2 \right) \\
	&\quad = \frac{2+\alpha}{N+\alpha}.
\end{align*}%
Thus, retaking $\lambda_{\varepsilon}>0$ sufficiently large,
we have \eqref{a_est2}.
\end{proof}

\section{Weighted energy estimates for damped wave equations}

In this section, we give weighted energy estimates
for solutions to the damped wave equation \eqref{dw}.
As we described in the introduction,
we use the auxiliary function
$A_{\varepsilon}$ constructed in Section 3.

We first recall the finite propagation speed property for
the equation \eqref{dw}.
For the proof, see for example
\cite{Ikawa}.
\begin{lemma}[Finite speed of the propagation]\label{lem_fps}
Let $u$ be a solution of the equation \eqref{dw} with initial data
$(u_0, u_1)$ satisfying
${\rm supp\,}(u_0, u_1) \subset \bar{\Omega} \cap B(0,R_0)$.
Then, we have
\begin{align*}%
	{\rm supp\,}u(\cdot,t) \subset
	\bar{\Omega} \cap B(0,R_0 +t)
\end{align*}%
for $t \ge 0$.
\end{lemma}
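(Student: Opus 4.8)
The plan is to establish the standard domain-of-dependence inequality via a local energy estimate on backward light cones, exploiting that the principal part of \eqref{dw} is the wave operator with propagation speed $1$ and that the damping coefficient $a(x)>0$ is dissipative. It suffices to prove the following localized statement: if $(u_0,u_1)$ vanish on $B(x_1,\rho)\cap\bar{\Omega}$ for some $x_1\in\R^N$ and $\rho>0$, then $u(x,t)=0$ whenever $|x-x_1|<\rho-t$ and $0\le t<\rho$. Granting this, the conclusion follows by a covering argument: for any $x_1$ with $|x_1|>R_0$, the ball $B(x_1,|x_1|-R_0)$ is disjoint from $B(0,R_0)\supset{\rm supp\,}(u_0,u_1)$, so taking $\rho=|x_1|-R_0$ yields $u(x_1,t)=0$ for all $t<|x_1|-R_0$, i.e. $u(\cdot,t)$ vanishes outside $B(0,R_0+t)$.

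For the localized statement, first I would introduce the truncated energy
\begin{align*}
	E(t) = \frac12 \int_{B(x_1,\rho-t)\cap\Omega} \big( u_t(x,t)^2 + |\nabla u(x,t)|^2 \big)\,dx,\quad 0\le t<\rho.
\end{align*}
The regularity \eqref{sol} guarantees that $E$ is absolutely continuous and that the differentiations and integrations by parts below are legitimate. Differentiating $E$, the shrinking of the spatial domain at unit speed produces a boundary contribution over the characteristic sphere $S_t:=\partial B(x_1,\rho-t)\cap\Omega$, so that
\begin{align*}
	E'(t) = \int_{B(x_1,\rho-t)\cap\Omega} \big( u_t u_{tt} + \nabla u\cdot\nabla u_t \big)\,dx
		- \frac12 \int_{S_t} \big( u_t^2 + |\nabla u|^2 \big)\,dS.
\end{align*}
Next I would integrate the gradient term by parts and insert the equation $u_{tt}-\Delta u=-a(x)u_t$. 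This converts the bulk term into $-\int a(x)u_t^2\,dx$ plus a flux $\int_{\partial(B\cap\Omega)} u_t\,\partial_n u\,dS$ over the whole boundary of $B(x_1,\rho-t)\cap\Omega$. The boundary splits into the characteristic sphere $S_t$ and the fixed part $\Gamma_t:=\partial\Omega\cap B(x_1,\rho-t)$. On $\Gamma_t$ the Dirichlet condition $u(\cdot,t)|_{\partial\Omega}=0$ gives $u_t=0$, so that flux vanishes. On $S_t$ the Cauchy--Schwarz inequality gives $|u_t\,\partial_n u|\le\frac12(u_t^2+|\nabla u|^2)$, whence the $S_t$-flux is dominated by the sphere term already present. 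Collecting everything,
\begin{align*}
	E'(t) \le -\int_{B(x_1,\rho-t)\cap\Omega} a(x)\,u_t^2\,dx \le 0,
\end{align*}
using $a(x)>0$. Thus $E(t)\le E(0)$; since the data vanish on $B(x_1,\rho)$ we have $E(0)=0$, forcing $u_t\equiv\nabla u\equiv0$ on the solid cone, and combined with $u(\cdot,0)=0$ there we conclude $u\equiv0$ on the cone.

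The main technical point to watch is the treatment of the fixed boundary $\Gamma_t$: one must verify that the backward cone is allowed to meet $\partial\Omega$ and that the associated flux genuinely vanishes. This is exactly where the homogeneous Dirichlet condition is used, through $u_t|_{\partial\Omega}=0$; the smoothness of $\partial\Omega$ ensures $\Gamma_t$ is a well-behaved hypersurface so that the divergence theorem on $B(x_1,\rho-t)\cap\Omega$ applies. A minor additional care is needed for the measure-zero set of times at which $S_t$ becomes tangent to $\partial\Omega$, but these do not affect the absolute continuity of $E$ nor the final monotonicity. Everything else is routine, and notably the sign of the damping is favorable, so the unboundedness of $a$ plays no role here. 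Alternatively, one may simply invoke the finite propagation property for second-order hyperbolic equations with lower-order terms as in \cite{Ikawa}.
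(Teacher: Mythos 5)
Your proof is correct. Note, however, that the paper itself gives no argument for this lemma at all: it simply recalls the finite propagation property and refers to the book of Ikawa, exactly as your closing sentence suggests. So your proposal is not "the paper's proof rewritten" but a self-contained substitute for the citation, and it is the standard one: the domain-of-dependence energy estimate on backward cones, with the moving-boundary term on the characteristic sphere $S_t$ absorbing the flux $u_t\,\partial_n u$ via Cauchy--Schwarz, the flux on $\Gamma_t=\partial\Omega\cap B(x_1,\rho-t)$ killed by the Dirichlet condition ($u_t=0$ there), and the damping contributing the favorable term $-\int a(x)u_t^2\,dx\le 0$. Your covering argument reducing the global statement to the localized cone statement is also sound, and you correctly identify the one point at which this problem could have differed from the constant-coefficient textbook case, namely the unbounded coefficient $a$: since $a>0$ enters only dissipatively, its growth is irrelevant, which is precisely why a generic citation suffices for the authors. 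What the citation buys is brevity; what your argument buys is a verification that nothing in the cited theory is strained by the hypotheses here (exterior domain, Dirichlet boundary, unbounded $a$), at the cost of the routine care you flag about differentiating the truncated energy when $S_t$ meets $\partial\Omega$, which the regularity class \eqref{sol} indeed supports. One cosmetic remark: your argument (and any correct one) yields ${\rm supp\,}u(\cdot,t)\subset\bar{\Omega}\cap\bar{B}(0,R_0+t)$, i.e.\ the closed ball; the open ball in the lemma's statement is a harmless imprecision of the paper, consistent with how the lemma is used afterwards.
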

\begin{remark}
From Lemma \ref{lem_fps}, we see that
$\langle x\rangle \le R_0 +1+ t$
for $x \in {\rm supp\,}u(\cdot,t)$.
We shall frequently use this inequality.
\end{remark}

We start with the following weighted energy identities.
In this step we do not need any special property
for the weight function.
\begin{lemma}[{\cite[Lemma 3.7]{SoWa16}}]
\label{e0-0}
Let $\Phi\in C^2(\overline{\Omega}\times [0,\infty))$ 
satisfy $\Phi>0$ and $\pa_t\Phi<0$
and let $u$ be a solution of \eqref{dw}. Then, we have
\begin{align}
\label{en_eq1}
	&\frac{d}{dt}\left[ \int_{\Omega} \Big(|\nabla u|^2+|u_t|^2\Big) \Phi\,dx \right]\\
\nonumber
	&\quad = \int_{\Omega} (\pa_t\Phi)^{-1} \big| \pa_t\Phi\nabla u -u_t\nabla\Phi \big|^2 \,dx\\
\nonumber
	&\qquad + \int_{\Omega}
		\Big( -2a(x)\Phi+\pa_t\Phi - (\pa_t\Phi)^{-1}|\nabla\Phi|^2 \Big)|u_t|^2 \,dx.   
\end{align}
\end{lemma}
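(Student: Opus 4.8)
The plan is to establish the identity \eqref{en_eq1} by a direct energy computation: multiply the damped wave equation by $\Phi u_t$, integrate over $\Omega$, and rearrange. First I would multiply the equation $u_{tt}-\Delta u + a(x)u_t=0$ by $\Phi u_t$ and integrate over $\Omega$. The term $\int_\Omega u_{tt}\Phi u_t\,dx$ I would rewrite as $\frac{d}{dt}\int_\Omega \tfrac12|u_t|^2\Phi\,dx - \int_\Omega \tfrac12|u_t|^2\pa_t\Phi\,dx$, isolating a time derivative at the cost of a zeroth-order term in $|u_t|^2$. For the Laplacian term, I would integrate by parts in space, using the Dirichlet boundary condition $u=0$ on $\pa\Omega$ (hence $u_t=0$ there, so the boundary contribution vanishes), to get $-\int_\Omega \Delta u\,\Phi u_t\,dx = \int_\Omega \nabla u\cdot\nabla(\Phi u_t)\,dx = \int_\Omega \Phi\nabla u\cdot\nabla u_t\,dx + \int_\Omega u_t\nabla u\cdot\nabla\Phi\,dx$. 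The first of these is again a time derivative: $\int_\Omega \Phi\nabla u\cdot\nabla u_t\,dx = \frac{d}{dt}\int_\Omega\tfrac12|\nabla u|^2\Phi\,dx - \int_\Omega\tfrac12|\nabla u|^2\pa_t\Phi\,dx$.

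Collecting terms, I would arrive at an expression of the form
\begin{align*}
	\frac{d}{dt}\int_\Omega \tfrac12\big(|\nabla u|^2+|u_t|^2\big)\Phi\,dx
	&= \int_\Omega \tfrac12\big(|\nabla u|^2+|u_t|^2\big)\pa_t\Phi\,dx \\
	&\quad - \int_\Omega u_t\,\nabla u\cdot\nabla\Phi\,dx
		- \int_\Omega a(x)\Phi|u_t|^2\,dx.
\end{align*}
The remaining task is purely algebraic: I need to show that (twice) the right-hand side above equals the two integrals on the right of \eqref{en_eq1}. The key step is to complete the square in the cross term. Since $\pa_t\Phi<0$, the quantity $(\pa_t\Phi)^{-1}$ is well defined and negative, and I would expand
\begin{align*}
	(\pa_t\Phi)^{-1}\big|\pa_t\Phi\,\nabla u - u_t\nabla\Phi\big|^2
	= \pa_t\Phi\,|\nabla u|^2 - 2 u_t\,\nabla u\cdot\nabla\Phi
		+ (\pa_t\Phi)^{-1}|u_t|^2|\nabla\Phi|^2.
\end{align*}
The first two terms on this right-hand side are exactly (twice) the gradient-energy and cross terms I produced above, while the last term is absorbed into the $|u_t|^2$ coefficient. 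Matching coefficients, the $|u_t|^2$ terms assemble into $\big(-2a(x)\Phi + \pa_t\Phi - (\pa_t\Phi)^{-1}|\nabla\Phi|^2\big)|u_t|^2$, which is precisely the second integrand in \eqref{en_eq1}.

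I do not anticipate a genuine obstacle here, since this is an exact identity rather than an estimate and no sign conditions are invoked beyond $\pa_t\Phi<0$ (needed only to divide by $\pa_t\Phi$) and $\Phi>0$. The one point requiring mild care is the justification of the integration by parts and the differentiation under the integral sign: these are legitimate because, by the finite speed of propagation (Lemma \ref{lem_fps}), $u(\cdot,t)$ is compactly supported for each $t$, and by the regularity \eqref{sol} the solution $u$ is smooth enough in time and space for all the manipulations to be valid. The most error-prone part in practice is simply bookkeeping the factor of $\tfrac12$ and the cross-term sign through the square-completion, so I would carry out the expansion of $|\pa_t\Phi\,\nabla u - u_t\nabla\Phi|^2$ carefully and verify that every term matches before concluding.
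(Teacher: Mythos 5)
Your computation is correct and is the standard derivation: multiply the equation by $\Phi u_t$, integrate by parts using the Dirichlet condition and the compact support from finite propagation speed, and complete the square in $(\pa_t\Phi)^{-1}\big|\pa_t\Phi\nabla u-u_t\nabla\Phi\big|^2$; I verified that the coefficients of $|\nabla u|^2$, the cross term, and $|u_t|^2$ all match \eqref{en_eq1}. The paper does not reprove this lemma but cites \cite[Lemma 3.7]{SoWa16}, whose proof is exactly this energy identity, so your argument coincides with the intended one.
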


\begin{lemma}[{\cite[Lemma 3.9]{SoWa16}}]
\label{e1-0}
Let $\Phi\in C^2(\overline{\Omega}\times [0,\infty))$ 
satisfy $\Phi>0$ and $\pa_t\Phi<0$
and let $u$ be a solution to \eqref{dw}.
Then, we have
\begin{align}
\label{en_eq2}
	\frac{d}{dt} \left[ \int_{\Omega} \Big(2uu_t+a(x)|u|^2\Big) \Phi\,dx  \right]
	&= 2\int_{\Omega} uu_{t} (\pa_t\Phi)\,dx
	+ 2\int_{\Omega} |u_t|^2 \Phi\,dx  
	- 2\int_{\Omega} |\nabla u|^2 \Phi\,dx \\
\nonumber
	&\quad + \int_{\Omega} \big(a(x)\pa_t\Phi+\Delta\Phi\big)|u|^2\,dx.
\end{align}
\end{lemma}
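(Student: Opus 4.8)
The plan is to prove \eqref{en_eq2} by directly differentiating the weighted quantity in time and then integrating by parts, which is the standard computation behind the energy identity associated with the multiplier $u$. First I would differentiate under the integral sign; this is legitimate because the solution has the regularity \eqref{sol} and, by the finite speed of propagation (Lemma \ref{lem_fps}), $u(\cdot,t)$ is compactly supported for each fixed $t$, so every integral below is effectively over a bounded set. Since $u$ is real-valued, I also use $|u|^2=u^2$ and $|u_t|^2=u_t^2$ freely.

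Differentiating the integrand $(2uu_t+a(x)u^2)\Phi$ in $t$ produces $(2u_t^2+2uu_{tt}+2a(x)uu_t)\Phi$ together with $(2uu_t+a(x)u^2)\partial_t\Phi$. The decisive algebraic step is to eliminate $u_{tt}$ by means of the equation \eqref{dw}, that is, $u_{tt}=\Delta u-a(x)u_t$; the two $a(x)uu_t$ contributions then cancel, leaving $(2u_t^2+2u\Delta u)\Phi$ plus the $\partial_t\Phi$ terms. The latter immediately supply $2\int_\Omega uu_t\,\partial_t\Phi\,dx$ and $\int_\Omega a(x)u^2\,\partial_t\Phi\,dx$, which account for the first right-hand term of \eqref{en_eq2} and for part of its last right-hand term.

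It then remains to rewrite $\int_\Omega 2u\Delta u\,\Phi\,dx$ through two integrations by parts. Integrating by parts against $u\Phi$ yields $-2\int_\Omega|\nabla u|^2\Phi\,dx-2\int_\Omega u\,\nabla u\cdot\nabla\Phi\,dx$, with no boundary contribution since $u=0$ on $\partial\Omega$. For the second integral I would write $2u\,\nabla u=\nabla(u^2)$ and integrate by parts once more to obtain $\int_\Omega u^2\Delta\Phi\,dx$, the boundary term again vanishing because $u^2=0$ on $\partial\Omega$. Collecting all contributions and grouping the $\Delta\Phi$ and $a(x)\partial_t\Phi$ terms reproduces exactly the four terms on the right-hand side of \eqref{en_eq2}.

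I do not expect any genuine obstacle: the argument is a routine computation, and the only points deserving care are the justification of differentiation under the integral and the vanishing of the two boundary integrals. Both follow at once from the Dirichlet condition in \eqref{dw} and the compact support guaranteed by Lemma \ref{lem_fps}, so the proof amounts to careful bookkeeping of the integration-by-parts identities.
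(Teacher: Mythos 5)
Your computation is correct and is exactly the standard multiplier argument: differentiate under the integral, substitute $u_{tt}=\Delta u-a(x)u_t$ so the $a(x)uu_t$ terms cancel, and integrate by parts twice using the Dirichlet condition and the compact support from Lemma \ref{lem_fps}. The paper itself omits the proof and cites \cite[Lemma 3.9]{SoWa16}, where the argument is this same bookkeeping, so your proposal matches the intended route.
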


Using the function $A_{\varepsilon}(x)$ constructed
in the previous section,
we introduce our weight function.
\begin{definition}
Let $h:=\frac{2+\alpha}{N+\alpha}$ 
and $\ep\in (0,1)$.
We define
\begin{equation}
\Phi_{\ep}(x,t)
=
\exp\left(
\frac{1}{h+2\ep}\,\frac{A_\ep(x)}{1+t}
\right),
\end{equation}
where $A_{\ep}$ is given in Lemma \ref{lem_ep}. 
For $t \ge 0$, we also define the following energy
\begin{gather}
\label{en_xta}
E_{\pa x}(t;u)
:=
\int_{\Omega}|\nabla u|^2\Phi_\ep\,dx,
\quad
E_{\pa t}(t;u)
:=
\int_{\Omega}|u_t|^2\Phi_\ep\,dx,
\\
\label{en_*A}
E_{a}(t;u)
:=
\int_{\Omega}a(x)|u|^2\Phi_\ep\,dx,
\quad
E_{*}(t;u)
:=
2\int_{\Omega}uu_t\Phi_\ep\,dx,
\end{gather}
and
$E_1(t;u):=E_{\pa x}(t;u)+E_{\pa t}(t;u)$ 
and $E_2(t;u):=E_{*}(t;u)+E_{a}(t;u)$.
\end{definition}

The main result of this section is the following.
We give weighted energy estimates for solutions 
of \eqref{dw}.

\begin{proposition}\label{main}
Assume that $(u_0,u_1)$ satisfies 
${\rm supp}\,(u_0,u_1)\subset \overline{B}(0,R_0)$ 
and the compatibility condition of 
order $k_0\geq 1$. 
Let $u$ be a solution of the problem \eqref{dw}. 
For every $\delta>0$ and $0 \leq k\leq k_0-1$, 
there exist $\ep>0$ and $M_{\delta,k,R_0}>0$ such that 
for every $t\geq 0$, 
\begin{align*}
&(1+t)^{\frac{N+\alpha}{2+\alpha}+2k+1-\delta}
\Big(E_{\pa x}(t;\pa_t^{k}u)+E_{\pa t}(t;\pa_t^{k}u)\Big)
+
(1+t)^{\frac{N+\alpha}{2+\alpha}+2k-\delta}E_a(t;\pa_t^{k}u) \\
&\quad \leq M_{\delta,k,R_0}\|(u_0,u_1)\|_{H^{k+1}\times H^k(\Omega)}^2.
\end{align*}
\end{proposition}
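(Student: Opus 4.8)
The plan is to prove Proposition~\ref{main} by a bootstrap argument on the two energy quantities $E_1$ and $E_2$, combined with the weighted energy identities of Lemma~\ref{e0-0} and Lemma~\ref{e1-0}, specialized to the weight $\Phi_\ep$. The strategy is to first establish the base case $k=0$ and then proceed by induction on $k$, observing that $\pa_t^k u$ itself solves the equation \eqref{dw} (with different initial data, whose norm is controlled by $\|(u_0,u_1)\|_{H^{k+1}\times H^k}$ via the compatibility condition of order $k_0$). Thus it suffices to treat $k=0$ carefully and then apply the same machinery to each time derivative.

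For the base case, the key is to exploit the special structure of $\Phi_\ep=\exp\bigl(\frac{1}{h+2\ep}\frac{A_\ep(x)}{1+t}\bigr)$ together with the properties \eqref{ellip}--\eqref{a_est2} of $A_\ep$. First I would record the pointwise relations $\pa_t\Phi_\ep=-\frac{1}{h+2\ep}\frac{A_\ep}{(1+t)^2}\Phi_\ep<0$ and $\nabla\Phi_\ep=\frac{1}{h+2\ep}\frac{\nabla A_\ep}{1+t}\Phi_\ep$, and $\Delta\Phi_\ep=\bigl(\frac{\Delta A_\ep}{(h+2\ep)(1+t)}+\frac{|\nabla A_\ep|^2}{(h+2\ep)^2(1+t)^2}\bigr)\Phi_\ep$. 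Substituting these into \eqref{en_eq1}, the crucial coefficient of $|u_t|^2$ is $-2a(x)\Phi_\ep+\pa_t\Phi_\ep-(\pa_t\Phi_\ep)^{-1}|\nabla\Phi_\ep|^2$; using \eqref{a_est2} to bound $|\nabla A_\ep|^2\le(h+\ep)a(x)A_\ep$ and the lower bound \eqref{ellip} on $\Delta A_\ep$, this term is controlled by $a(x)\Phi_\ep$ with a favorable sign, so that the energy identity yields a differential inequality of the form $\frac{d}{dt}E_1\le -c\,E_a/(1+t)^2+(\text{good terms})$, where the damping term $E_a$ reappears. Similarly, inserting the estimates into \eqref{en_eq2} produces $\frac{d}{dt}E_2$ controlled by $E_{\pa t}$, $E_{\pa x}$ and $E_a$ with coefficients dictated by $a(x)\pa_t\Phi_\ep+\Delta\Phi_\ep$, where again \eqref{ellip} and \eqref{a_est2} give the decisive cancellation making the coefficient of $|u|^2$ have the right sign. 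The finite propagation property (Lemma~\ref{lem_fps}) and the growth \eqref{a_est1} let me compare $A_\ep(x)/(1+t)$ with $\langle x\rangle^{2+\alpha}/(1+t)\lesssim (1+t)^{1+\alpha}$ on the support, which keeps $\Phi_\ep$ bounded above and below by powers of $(1+t)$ uniformly in $x$.

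The heart of the argument is then a weighted combination $F(t)=(1+t)^{a}E_1+(1+t)^{b}E_2+\nu(1+t)^{a}E_2$ (a Lyapunov-type functional, with exponents $a,b$ chosen near $\frac{N+\alpha}{2+\alpha}+1-\delta$ and $\frac{N+\alpha}{2+\alpha}-\delta$ and a small coupling constant $\nu$) for which the differential inequalities above combine to give $\frac{d}{dt}F\le 0$ for $t$ large, once $\ep$ and $\delta$ are fixed appropriately. The exponent $\frac{N+\alpha}{2+\alpha}$ arises precisely from the sharp constant $h=\frac{2+\alpha}{N+\alpha}$ in \eqref{a_est2}: the gain of one power of $(1+t)$ per application of the decay mechanism is limited by how much of $E_a$ can be absorbed, and the borderline is governed by $h$. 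The loss of $\delta$ reflects the strict inequality needed so that the coefficients retain a definite sign after the power-weight is differentiated (the term $\frac{d}{dt}(1+t)^a=a(1+t)^{a-1}$ must be dominated by the negative contribution from the energy identity).

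The main obstacle I anticipate is balancing the algebra of the two coupled inequalities so that the cross terms $E_*$ and $E_{\pa x}$ do not destroy the monotonicity of $F$. Concretely, the term $2\int uu_t\,\pa_t\Phi_\ep$ in \eqref{en_eq2} and the indefinite $|\pa_t\Phi_\ep\nabla u-u_t\nabla\Phi_\ep|^2$ term in \eqref{en_eq1} must be handled by Cauchy--Schwarz in a way that is compatible with the weights; the coupling constant $\nu$ and the choice of $\ep=\ep(\delta)$ must be tuned so that $E_{\pa x}$ and $E_{\pa t}$ absorb these with room to spare. Once the monotonicity of $F$ on $[T_0,\infty)$ is established for some $T_0$, integrating gives $F(t)\le F(T_0)$, and a separate (easier) estimate on the bounded interval $[0,T_0]$ using continuity of the solution \eqref{sol} and $\Phi_\ep\approx 1$ there yields $F(T_0)\lesssim\|(u_0,u_1)\|_{H^1\times L^2}^2$. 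Unwinding the definition of $F$ and using the two-sided bound on $\Phi_\ep$ then produces the claimed estimate for $k=0$, and the induction closes by replacing $u$ with $\pa_t^k u$ and $(u_0,u_1)$ with the corresponding data of higher Sobolev norm.
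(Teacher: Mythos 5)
Your outline follows the same route as the paper: the derivatives of $\Phi_\ep$, the two energy identities of Lemmas \ref{e0-0}--\ref{e1-0}, the Hardy-type inequality coming from $\Delta A_\ep\ge(1-\ep)a$, a $\nu$-weighted combination of $E_1$ and $E_2$ with time weights whose admissible exponent is pinned by $h=\frac{2+\alpha}{N+\alpha}$ (hence the $\delta$-loss), and induction on $k$ using that $\pa_t^k u$ again solves \eqref{dw}. Two points in your execution, however, would not survive as written. First, $\Phi_\ep$ is \emph{not} bounded above by powers of $(1+t)$: since $\alpha>0$, on the support of $u(\cdot,t)$ one only gets $A_\ep(x)/(1+t)\lesssim(1+t)^{1+\alpha}\to\infty$, so $\Phi_\ep$ is exponentially large there. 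This does no harm because the conclusion is stated for the weighted energies themselves and no upper bound on $\Phi_\ep$ is ever needed; but the actual role of finite propagation (Lemma \ref{lem_fps}) is different from what you assign it, namely to give $A_\ep(x)/a(x)\le C(R_0+1+t)^2$ on the support, which is exactly what closes the Cauchy--Schwarz estimate of the cross term $2\int uu_t\,\pa_t\Phi_\ep$ against $E_a(t;\pa_t u)$ with a $t$-independent constant. (Also, the term $(\pa_t\Phi_\ep)^{-1}|\pa_t\Phi_\ep\nabla u-u_t\nabla\Phi_\ep|^2$ is nonpositive and is simply discarded, and the dissipation produced by \eqref{en_eq1} is $-E_a(t;\pa_t u)$, not $-cE_a(t;u)/(1+t)^2$.)

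Second, the single monotone functional with mismatched exponents $a=b+1$ on $E_1$ and $E_2$ has a genuine tension: the coupling constant must be large enough that $\nu E_1+E_2\ge\frac34E_a$ (needed because $E_*$ is indefinite, so otherwise monotonicity of $F$ gives no information on $E_a$), yet the resulting term $\nu a(1+t)^{a-1}E_{\pa x}=\nu a(1+t)^{b}E_{\pa x}$ must be absorbed by the single negative multiple of $(1+t)^{b}E_{\pa x}$ supplied by the $E_2$ identity, which forces $\nu$ small; these two requirements collide. The paper resolves this by running the argument in two passes: first with the \emph{same} exponent $\lambda_0=\frac{(1-\ep)(1-4\ep)}{(1+\ep)(h+2\ep)}$ on both $E_1$ and $E_2$, which upon integration yields not only the decay of $E_a(t;u)$ but also the space--time bound $\int_0^t(t_3+s)^{\lambda_0}E_{\pa x}(s;u)\,ds\le C$; this integrability is then fed into the separate inequality for $\frac{d}{dt}\big((t_1+t)^{\lambda_0+1}E_1\big)$, whose only positive contribution is $(\lambda_0+1)(t_1+t)^{\lambda_0}E_{\pa x}$, to gain the extra power of $(1+t)$ on $E_1$ (and, as a byproduct, the integrated bound on $E_a(s;\pa_t u)$ that drives the induction). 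You should restructure the heart of your argument along these lines rather than seeking a single monotone Lyapunov functional.
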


To prove this,
we need to prepare the following lemmas.
First, we calculate derivatives of the weight function $\Phi_{\ep}$. 
\begin{lemma}\label{lem_phi}
We have
\begin{align*}%
	\pa_t \Phi_{\ep}(x,t) &= -\frac{1}{h+2\ep} \frac{A_{\ep}(x)}{(1+t)^2} \Phi_{\ep}(x,t),\\
	\nabla \Phi_{\ep}(x,t) &=
		\frac{1}{h+2\ep} \frac{\nabla A_{\ep}(x)}{1+t} \Phi_{\ep}(x,t),\\
	\Delta \Phi_{\ep}(x,t) &=
		\frac{1}{h+2\ep} \frac{\Delta A_{\ep}(x)}{1+t} \Phi_{\ep}(x,t)
			+ \left| \frac{1}{h+2\ep} \frac{\nabla A_{\ep}(x)}{1+t} \right|^2
			\Phi_{\ep}(x,t).
\end{align*}%
In particular, we obtain
\begin{align*}%
	-\Delta \Phi_{\ep}(x,t) + \frac{|\nabla \Phi_{\ep}(x,t)|^2}{\Phi_{\ep}(x,t)}
		&= - \frac{1}{h+2\ep} \frac{\Delta A_{\ep}(x)}{1+t} \Phi_{\ep}(x,t),\\
	 \frac{|\nabla \Phi_{\ep}(x,t)|^2}{\pa_t \Phi_{\ep}(x,t)}
	 	&= - \frac{1}{h+2\ep} \frac{|\nabla A_{\ep}(x)|^2}{A_{\ep}(x)} \Phi_{\ep}(x,t).
\end{align*}%
\end{lemma}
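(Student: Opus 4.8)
The plan is to treat this as a direct computation via the chain rule, since $\Phi_\ep$ is an explicit exponential of the function $c\,A_\ep(x)/(1+t)$ with $c := 1/(h+2\ep)$, and $A_\ep \in C^2(\R^N)$ from Lemma \ref{lem_ep} guarantees that all the indicated derivatives exist and are continuous. Writing $\Phi_\ep = \exp(c\,A_\ep(x)/(1+t))$, I would first record the two first-order derivatives. Differentiating the exponent in $t$ produces the factor $-c\,A_\ep(x)/(1+t)^2$, so the chain rule gives $\pa_t\Phi_\ep = -c\,A_\ep(x)(1+t)^{-2}\Phi_\ep$, which is the first formula. Differentiating the exponent in $x$ gives $c\,\nabla A_\ep(x)/(1+t)$, so $\nabla\Phi_\ep = c\,\nabla A_\ep(x)(1+t)^{-1}\Phi_\ep$, the second formula.

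For the Laplacian I would take the divergence of the expression just obtained for $\nabla\Phi_\ep$. Applying the product rule to $c\,(1+t)^{-1}\nabla A_\ep(x)\,\Phi_\ep$ splits $\Delta\Phi_\ep$ into one term where the divergence falls on $\nabla A_\ep$, giving $c\,(1+t)^{-1}\Delta A_\ep(x)\,\Phi_\ep$, and one term where it falls on $\Phi_\ep$, giving $c\,(1+t)^{-1}\nabla A_\ep(x)\cdot\nabla\Phi_\ep = c^2(1+t)^{-2}|\nabla A_\ep(x)|^2\Phi_\ep$ after substituting the gradient formula. Collecting these yields the third formula, the second summand being exactly $|c\,\nabla A_\ep(x)/(1+t)|^2\Phi_\ep$.

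The two ``in particular'' identities then follow by algebraic substitution. For the first, I would note that $|\nabla\Phi_\ep|^2/\Phi_\ep = c^2(1+t)^{-2}|\nabla A_\ep(x)|^2\Phi_\ep$ is precisely the second summand appearing in $\Delta\Phi_\ep$; subtracting $\Delta\Phi_\ep$ therefore cancels this gradient-squared contribution exactly, leaving $-c\,(1+t)^{-1}\Delta A_\ep(x)\,\Phi_\ep$. For the second, dividing $|\nabla\Phi_\ep|^2 = c^2(1+t)^{-2}|\nabla A_\ep(x)|^2\Phi_\ep^2$ by $\pa_t\Phi_\ep = -c\,A_\ep(x)(1+t)^{-2}\Phi_\ep$ cancels the common factor $c\,(1+t)^{-2}\Phi_\ep$ and produces $-c\,|\nabla A_\ep(x)|^2 A_\ep(x)^{-1}\Phi_\ep$, the asserted formula.

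There is no genuine obstacle here: the statement is a lemma of routine character whose only role is to package the derivatives of $\Phi_\ep$ for use in the energy identities of Lemmas \ref{e0-0} and \ref{e1-0}. The one point worth attention is the clean cancellation in the first ``in particular'' identity, which hinges on observing that the extra $|\nabla\Phi_\ep|^2/\Phi_\ep$ term matches the gradient-squared summand of $\Delta\Phi_\ep$ term for term; once this is seen, every identity is immediate.
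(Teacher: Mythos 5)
Your computation is correct and is exactly the routine chain-rule verification the paper has in mind; the paper itself omits the proof as straightforward. Nothing further is needed.
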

The proof is straightforward and we omit it.
The second tool is a Hardy type inequality.
\begin{lemma}
\label{lem_ha}
For $t\geq 0$, we have
\begin{align}
\label{hardy}
\frac{1-\ep}{h+2\ep}\,\frac{1}{1+t}
E_{a}(t;u)
	\leq  E_{\pa x}(t;u).
\end{align}
\end{lemma}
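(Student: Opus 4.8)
The plan is to prove the inequality \eqref{hardy} by exploiting the structure of the weight function $\Phi_{\ep}$ relative to the auxiliary function $A_{\ep}$, specifically the properties \eqref{ellip} and \eqref{a_est2} established in Lemma \ref{lem_ep}. The key observation is that $\Phi_{\ep}$ is an exponential of $A_{\ep}/(1+t)$, so that $\nabla \Phi_{\ep}$ is proportional to $\nabla A_{\ep}$, and this allows an integration by parts to transfer the weight $a(x)$ onto the gradient term. First I would start from the quantity $E_a(t;u) = \int_\Omega a(x)|u|^2 \Phi_\ep\,dx$ and replace $a(x)$ using the lower bound $(1-\ep)a(x) \le \Delta A_\ep(x)$ from \eqref{ellip}, giving
\begin{align*}
	(1-\ep) E_a(t;u)
	\le \int_\Omega (\Delta A_\ep) |u|^2 \Phi_\ep\,dx.
\end{align*}

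Next I would integrate by parts to move the Laplacian off $A_\ep$. Using the boundary condition $u|_{\partial\Omega}=0$ and the compact support of $u(\cdot,t)$ from Lemma \ref{lem_fps}, the boundary terms vanish and I obtain
\begin{align*}
	\int_\Omega (\Delta A_\ep)|u|^2 \Phi_\ep\,dx
	= -\int_\Omega \nabla A_\ep \cdot \nabla\!\left( |u|^2 \Phi_\ep \right) dx.
\end{align*}
Expanding the gradient produces two terms: one containing $\nabla(|u|^2) = 2u\nabla u$ paired with $\nabla A_\ep\,\Phi_\ep$, and one containing $|u|^2\,\nabla A_\ep \cdot \nabla\Phi_\ep$. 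For the latter, Lemma \ref{lem_phi} gives $\nabla\Phi_\ep = \frac{1}{h+2\ep}\frac{\nabla A_\ep}{1+t}\Phi_\ep$, so $\nabla A_\ep \cdot \nabla\Phi_\ep = \frac{1}{h+2\ep}\frac{|\nabla A_\ep|^2}{1+t}\Phi_\ep$, and here is where the crucial estimate \eqref{a_est2}, namely $\frac{|\nabla A_\ep|^2}{a(x)A_\ep(x)} \le h+\ep$, will be used to control this term against $E_a$ and close the estimate.

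The main obstacle, and the point requiring care, will be handling the cross term $-2\int_\Omega u\,(\nabla A_\ep \cdot \nabla u)\,\Phi_\ep\,dx$ arising from $\nabla(|u|^2)$. The natural strategy is a Cauchy--Schwarz / Young inequality to split it into a piece absorbed by the gradient energy $E_{\pa x}(t;u) = \int_\Omega |\nabla u|^2 \Phi_\ep\,dx$ and a piece controlled by $E_a$ via \eqref{a_est2}; the algebra must be arranged so that the coefficient in front of the reabsorbed $E_a$ term, combined with the direct contribution from the $|u|^2\nabla A_\ep\cdot\nabla\Phi_\ep$ term, leaves exactly the factor $\frac{1-\ep}{h+2\ep}\frac{1}{1+t}$ on the left and a clean $E_{\pa x}$ on the right. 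I expect the constants to work out precisely because the sharp value $h = \frac{2+\alpha}{N+\alpha}$ in \eqref{a_est2} is calibrated for exactly this balance; verifying that the Young-inequality parameter can be chosen to make the bound tight (rather than merely bounded) is the delicate bookkeeping step, but it is a finite computation once the integration by parts and the two structural identities from Lemmas \ref{lem_ep} and \ref{lem_phi} are in place.
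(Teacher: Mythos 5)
Your proposal is correct and follows essentially the same route as the paper: replace $a(x)$ by $\Delta A_\ep$ via \eqref{ellip}, integrate by parts to get $-\int_\Omega \nabla A_\ep\cdot\nabla(|u|^2\Phi_\ep)\,dx$, and absorb the cross term $-2\int_\Omega u\,(\nabla A_\ep\cdot\nabla u)\,\Phi_\ep\,dx$ into $E_{\pa x}(t;u)$ --- the paper packages this absorption as the exact identity $\int_\Omega(\Delta\Phi_\ep-|\nabla\Phi_\ep|^2/\Phi_\ep)|u|^2\,dx=E_{\pa x}(t;u)-\int_\Omega\Phi_\ep^{-1}|\nabla(\Phi_\ep u)|^2\,dx\le E_{\pa x}(t;u)$, which is precisely your Cauchy--Schwarz/Young step with the optimal parameter. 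One small correction to your bookkeeping: \eqref{a_est2} is not needed in this lemma, since the term $-\int_\Omega|u|^2\,\nabla A_\ep\cdot\nabla\Phi_\ep\,dx=-\tfrac{1}{(h+2\ep)(1+t)}\int_\Omega|u|^2|\nabla A_\ep|^2\Phi_\ep\,dx$ enters with a \emph{favorable} sign and exactly cancels the $|\nabla A_\ep|^2$ remainder produced by Young's inequality (no control of $|\nabla A_\ep|^2$ by $aA_\ep$ is required); \eqref{a_est2} is instead used later, in the proof of \eqref{e0-1}.
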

\begin{proof}
As in the proof of \cite[Lemma 3.6]{SoWa16}, 
by integration by parts we have
\begin{align*}
	\int_{\Omega}
	\Delta(\log\Phi_\ep)|u|^2\Phi_{\ep}\,dx
	&=
	\int_{\Omega}\left(\Delta\Phi_\ep-\frac{|\nabla\Phi_\ep|^2}{\Phi_\ep}\right)|u|^2\,dx \\
	&= \int_{\Omega}|\nabla u|^2\,\Phi_\ep\,dx
		- \int_{\Omega} \Phi_{\ep}^{-1} | \nabla ( \Phi_{\ep} u ) |^2\, dx\\
	&\leq 
	\int_{\Omega}|\nabla u|^2\,\Phi_\ep\,dx.
\end{align*}
Using Lemma \ref{lem_phi} and \eqref{ellip}, we see that
\[
\Delta(\log\Phi_\ep(x))=\frac{1}{h+2\ep}\,\frac{\Delta A_{\ep}(x)}{1+t}
\geq 
\frac{1-\ep }{h+2\ep}\,\frac{a(x)}{1+t},
\]
which leads to \eqref{hardy}.
\end{proof}

Next, in order to clarify the effect of the finite propagation speed property, 
we put
\[
a_1:=\inf_{x\in \Omega}\Big(\lr{x}^{-\alpha} a(x)\Big).
\]
Then we have the following.
\begin{lemma}\label{lem_appfps}
For $t\geq 0$, we have
\begin{align}
\label{E12-F}
E_{\pa t}(t;u)
&\leq \frac{1}{a_1} E_{a}(t;\pa_t u),
\\
\label{A/a}
\int_{\Omega} \frac{A_\ep(x)}{a(x)}|u_{t}|^2 \Phi_\ep \,dx
&\leq \frac{A_{2\ep}}{a_1}(R_0+1+t)^{2}
E_{\pa t}(t;u),
\\
\label{E21-F}
|E_{*}(t;u)|
&\leq 
\frac{2}{\sqrt{a_1}}
\sqrt{E_a(t;u)E_{\pa t}(t;u)}. 
\end{align}
\end{lemma}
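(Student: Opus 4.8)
The plan is to prove the three estimates \eqref{E12-F}, \eqref{A/a}, \eqref{E21-F} by elementary pointwise comparisons combined with the definitions in \eqref{en_xta}--\eqref{en_*A}, using the properties of $a(x)$ and $A_\ep(x)$ established earlier. The common thread is that all three quantities are weighted $L^2$-type integrals against the same weight $\Phi_\ep$, so the strategy is to bound the relevant integrands pointwise and then integrate.

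For \eqref{E12-F}, I would observe that $a_1=\inf_{x\in\Omega}\big(\lr{x}^{-\alpha}a(x)\big)$ gives $a(x)\ge a_1\lr{x}^{\alpha}\ge a_1$ only if $\lr{x}\ge 1$, so more carefully one uses $a(x)\ge a_1\lr{x}^\alpha$; since $\alpha>0$ and $\lr{x}\ge 1$ always, we indeed have $a(x)\ge a_1$ pointwise on $\Omega$. Hence $1\le a_1^{-1}a(x)$, and applying this to the integrand of $E_{\pa t}(t;\pa_t u)$ — note the derivative is shifted, so we compare $E_{\pa t}(t;u)=\int_\Omega|u_t|^2\Phi_\ep\,dx$ against $E_a(t;\pa_t u)=\int_\Omega a(x)|u_t|^2\Phi_\ep\,dx$ — we get $E_{\pa t}(t;u)\le a_1^{-1}E_a(t;\pa_t u)$ directly by pointwise domination inside a single integral. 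For \eqref{A/a}, I would combine the upper bound \eqref{a_est1}, namely $A_\ep(x)\le A_{2\ep}\lr{x}^{2+\alpha}$, with the finite-speed remark $\lr{x}\le R_0+1+t$ on $\mathrm{supp}\,u(\cdot,t)$, and with $a(x)^{-1}\le a_1^{-1}\lr{x}^{-\alpha}$. This yields the pointwise bound $\tfrac{A_\ep(x)}{a(x)}\le \tfrac{A_{2\ep}}{a_1}\lr{x}^{2}\le \tfrac{A_{2\ep}}{a_1}(R_0+1+t)^2$ on the support, and integrating $|u_t|^2\Phi_\ep$ gives \eqref{A/a}.

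For \eqref{E21-F}, the natural move is the Cauchy--Schwarz inequality applied to $E_*(t;u)=2\int_\Omega u\,u_t\,\Phi_\ep\,dx$. I would write $|E_*(t;u)|\le 2\int_\Omega|u|\,|u_t|\,\Phi_\ep\,dx$ and split $\Phi_\ep=\big(a(x)^{1/2}|u|\,\Phi_\ep^{1/2}\big)\big(|u_t|\,\Phi_\ep^{1/2}\big)\,a(x)^{-1/2}$; bounding $a(x)^{-1/2}\le a_1^{-1/2}$ pointwise (again using $a(x)\ge a_1$) and then applying Cauchy--Schwarz produces $|E_*(t;u)|\le 2a_1^{-1/2}\big(\int_\Omega a|u|^2\Phi_\ep\big)^{1/2}\big(\int_\Omega|u_t|^2\Phi_\ep\big)^{1/2}=2a_1^{-1/2}\sqrt{E_a(t;u)E_{\pa t}(t;u)}$, which is exactly \eqref{E21-F}.

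None of these steps presents a serious obstacle; the lemma is a bookkeeping step assembling the structural inputs (the weight estimates \eqref{a_est1}, the finite-speed support bound, and the lower bound on $a$) for later use in the main energy argument of Proposition \ref{main}. The only point requiring slight care is the definitional matching in \eqref{E12-F}: one must track that the $u_t$ appearing there is the solution's time derivative while $\pa_t u$ is the argument of $E_a$, so that the two integrands genuinely differ only by the factor $a(x)$ and the pointwise comparison $a(x)\ge a_1$ is the entire content. The rest is immediate from the definitions and elementary inequalities.
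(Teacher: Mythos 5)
Your proposal is correct and follows essentially the same route as the paper: the pointwise bound $a(x)\ge a_1\lr{x}^{\alpha}\ge a_1$ for \eqref{E12-F}, the combination of \eqref{a_est1}, $a(x)^{-1}\le a_1^{-1}\lr{x}^{-\alpha}$ and the finite-speed support bound for \eqref{A/a}, and Cauchy--Schwarz with the weight split $a^{1/2}\cdot a^{-1/2}$ for \eqref{E21-F} are exactly the steps in the paper's proof. No gaps.
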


\begin{proof}
By $a(x)/ a_1 \ge \lr{x}^{\alpha} \ge 1$,
we have 
\begin{align*}
\int_{\Omega}|u_t|^2\Phi_\ep\,dx
\leq \int_{\Omega}\frac{a(x)}{a_1}|u_t|^2\Phi_\ep\,dx
\leq \frac{1}{a_1} E_a(t;\pa_t u),
\end{align*}
which shows \eqref{E12-F}.
By \eqref{a_est1},
$1/a(x) \leq (a_1 \lr{x}^{\alpha} )^{-1}$
and Lemma \ref{lem_fps}, we have
\begin{align*}%
	\frac{A_{\ep}(x)}{a(x)}
	\le \frac{A_{2\ep} \lr{x}^{2+\alpha}}{a_1 \lr{x}^{\alpha}}
	\le \frac{A_{2\ep} }{a_1} (R+t+1)^2,
\end{align*}%
which implies \eqref{A/a}.
Finally, using the Cauchy-Schwarz inequality and
the inequality $a(x)/ a_1 \ge \lr{x}^{\alpha} \ge 1$,
we see that
\begin{align*}
\left|\int_{\Omega}uu_t\Phi_\ep\,dx\right|^2
&\leq 
\frac{1}{a_1}\left(\int_{\Omega}a(x)|u|^2\Phi_\ep\,dx\right)
E_{\pa t}(t;u)
\\
&\leq 
\frac{1}{a_1}
E_a(t;u)E_{\pa t}(t;u),
\end{align*}
which yields \eqref{E21-F}.
The proof of \eqref{E21-F} is similar and we omit it.
\end{proof}

Applying lemmas proved above,
we have the following differential inequality
for $E_1(t;u)$ and $E_2(t;u)$.
\begin{lemma}
{\bf (i)}\ For every $t\geq 0$, we have
\begin{equation}\label{e0-1}
    \frac{d}{dt}E_1(t;u)
	\leq 
    -E_{a}(t;\pa_t u).
\end{equation}
{\bf (ii)}\ For every $\ep\in (0,1/3)$ and $t\geq 0$, we have
\begin{align}
    \frac{d}{dt}E_{2}(t;u)
	&\leq 
	-\frac{1-3\ep}{1-\ep}E_{\pa x}(t;u)
\label{e1-1}
	\\
\nonumber
	&\quad +
	\left(\frac{2}{a_1}+\frac{A_{2\ep}(R_0+1)^2}{\ep a_1^2}\right)E_{a}(t;\pa_t u).
\end{align}
\end{lemma}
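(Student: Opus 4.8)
The plan is to feed each part the corresponding weighted energy identity and then estimate the resulting integrands pointwise, using the algebraic expressions for the derivatives of $\Phi_\ep$ in Lemma \ref{lem_phi} together with the structural bounds \eqref{ellip} and \eqref{a_est2} on $A_\ep$.

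For part (i), I would start from the identity in Lemma \ref{e0-0} with $\Phi=\Phi_\ep$. Since $\partial_t\Phi_\ep<0$, the perfect-square term $\int_\Omega (\partial_t\Phi_\ep)^{-1}|\partial_t\Phi_\ep\nabla u-u_t\nabla\Phi_\ep|^2\,dx$ is nonpositive and can be discarded. It then remains to control the coefficient of $|u_t|^2$, namely $-2a\Phi_\ep+\partial_t\Phi_\ep-(\partial_t\Phi_\ep)^{-1}|\nabla\Phi_\ep|^2$. By the last identity of Lemma \ref{lem_phi} the final term equals $\frac{1}{h+2\ep}\frac{|\nabla A_\ep|^2}{A_\ep}\Phi_\ep$, which by \eqref{a_est2} (recall $h=\frac{2+\alpha}{N+\alpha}$) is bounded above by $\frac{h+\ep}{h+2\ep}a\Phi_\ep\le a\Phi_\ep$. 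Since $\partial_t\Phi_\ep\le0$, the whole coefficient is $\le -a\Phi_\ep$, and integrating yields $\frac{d}{dt}E_1(t;u)\le -\int_\Omega a|u_t|^2\Phi_\ep\,dx=-E_a(t;\partial_t u)$.

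For part (ii), I would begin from Lemma \ref{e1-0}, which writes $\frac{d}{dt}E_2(t;u)$ as the sum of $2\int uu_t\partial_t\Phi_\ep\,dx$, $2E_{\partial t}(t;u)$, $-2E_{\partial x}(t;u)$, and $\int(a\partial_t\Phi_\ep+\Delta\Phi_\ep)|u|^2\,dx$. The second is immediately handled by \eqref{E12-F}, contributing $\frac{2}{a_1}E_a(t;\partial_t u)$. For the last integral I would insert the expressions for $\partial_t\Phi_\ep$ and $\Delta\Phi_\ep$ from Lemma \ref{lem_phi} and split it. The piece $\frac{1}{(h+2\ep)(1+t)}\int\Delta A_\ep\,|u|^2\Phi_\ep\,dx$ is bounded, via $\Delta A_\ep\le(1+\ep)a$ from \eqref{ellip} and the Hardy inequality of Lemma \ref{lem_ha}, by $\frac{1+\ep}{1-\ep}E_{\partial x}(t;u)$; added to the $-2E_{\partial x}(t;u)$ term this produces exactly $-\frac{1-3\ep}{1-\ep}E_{\partial x}(t;u)$, which explains the restriction $\ep\in(0,1/3)$. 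The remaining piece of the last integral, carrying the factor $(1+t)^{-2}$, has integrand proportional to $-aA_\ep+\frac{1}{h+2\ep}|\nabla A_\ep|^2$, which is $\le -\frac{\ep}{h+2\ep}aA_\ep\le0$ by \eqref{a_est2}.

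Finally, the cross term $2\int uu_t\partial_t\Phi_\ep\,dx=-\frac{2}{(h+2\ep)(1+t)^2}\int uu_tA_\ep\Phi_\ep\,dx$ I would estimate by Young's inequality, splitting $2|u||u_t|A_\ep\le \eta\,aA_\ep|u|^2+\eta^{-1}\frac{A_\ep}{a}|u_t|^2$. The key bookkeeping, and the main obstacle, is to choose $\eta=\frac{\ep}{h+2\ep}$ so that the resulting $|u|^2$ contribution is exactly absorbed by the nonpositive leftover piece from the previous paragraph; the surviving $|u_t|^2$ contribution is then $\frac{1}{\ep(1+t)^2}\int\frac{A_\ep}{a}|u_t|^2\Phi_\ep\,dx$, which by \eqref{A/a}, the elementary bound $\frac{(R_0+1+t)^2}{(1+t)^2}\le(R_0+1)^2$, and \eqref{E12-F} is controlled by $\frac{A_{2\ep}(R_0+1)^2}{\ep a_1^2}E_a(t;\partial_t u)$. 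Collecting all contributions gives \eqref{e1-1}. The delicate point throughout is that the usable margin in \eqref{a_est2} is only $\ep$ beyond the critical constant $h$, so each estimate must be arranged to consume at most this margin, which is precisely what forces the value of $\eta$ and the appearance of $\ep^{-1}$ in the final constant.
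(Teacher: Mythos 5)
Your proposal is correct and follows essentially the same route as the paper: part (i) via the identity of Lemma \ref{e0-0} with the bound $\frac{1}{h+2\ep}\frac{|\nabla A_\ep|^2}{A_\ep}\le\frac{h+\ep}{h+2\ep}a$ from \eqref{a_est2}, and part (ii) via Lemma \ref{e1-0}, the Hardy inequality of Lemma \ref{lem_ha} for the $\Delta A_\ep$ piece, and exactly the same Young-inequality split of the cross term with weight $\eta=\frac{\ep}{h+2\ep}$ chosen so that the $|u|^2$ contribution is absorbed by the leftover $-\frac{\ep}{(h+2\ep)^2}\frac{aA_\ep}{(1+t)^2}$ term. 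The bookkeeping with \eqref{A/a} and \eqref{E12-F} producing the stated constants also matches the paper.
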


\begin{proof}
To prove \eqref{e0-1}, we use Lemma \ref{e0-0}
and estimate the last term of \eqref{en_eq1}.
By virtue of Lemma \ref{lem_phi}, \eqref{a_est2} and that $A_{\ep} \ge 0$,
we have
\begin{align*}
&-2a(x)\Phi_\ep+\pa_t\Phi_\ep - (\pa_t\Phi_\ep)^{-1}
|\nabla \Phi_\ep|^2\\
&\quad=
\left(-2a(x)-\frac{A_{\ep}(x)}{(h+2\ep)(1+t)^2}
+\frac{1}{h+2\ep}\frac{|\nabla A_\ep(x)|^2}{A_\ep(x)}\right)\Phi_\ep
\\
&\quad\leq 
\left(
-2a(x)
+\frac{h+\ep}{h+2\ep}a(x)\right)\Phi_\ep
\\
&\quad\leq 
-
a(x)\Phi_\ep,
\end{align*}
which implies \eqref{e0-1}.
Next, we use Lemma \ref{e1-0} to verify \eqref{e1-1}
and estimate the last term of \eqref{en_eq2}.
It follows from Lemma \ref{lem_phi} and \eqref{ellip} that
\begin{align*}
a(x)\pa_t\Phi_\ep+\Delta\Phi_\ep
&=
\frac{1}{h+2\ep}
\left(-\frac{a(x)A_{\ep}(x)}{(1+t)^2}
+\frac{|\nabla A_\ep(x)|^2}{(h+2\ep)(1+t)^2}
+\frac{\Delta A_{\ep}(x)}{1+t}
\right)\Phi_\ep
\\
&
\leq
\left(
-\frac{\ep}{(h+2\ep)^2}\,\frac{a(x)A_{\ep}(x)}{(1+t)^2}
+\frac{1+\ep}{h+2\ep}\,\frac{a(x)}{1+t}
\right)
\Phi_\ep. 
\end{align*}
and hence, with Lemma \ref{lem_ha}, 
we have 
\begin{align*}
&\int_{\Omega} \big(a(x)\pa_t\Phi_\ep+\Delta\Phi_\ep\big)|u|^2\,dx\\
&\quad \leq
\frac{1+\ep}{1-\ep}\int_{\Omega} |\nabla u|^2\Phi_\ep\,dx
-\frac{\ep}{(h+2\ep)^2}\,
\frac{1}{(1+t)^2}
\int_{\Omega} a(x)A_\ep(x)|u|^2\Phi_\ep\,dx.
\end{align*}
By using \eqref{A/a} and \eqref{E12-F},
the first term of the right-hand side of \eqref{en_eq2}
is also estimated as
\begin{align*}
&2\int_{\Omega} uu_{t} (\pa_t\Phi_\ep)\,dx \\
&\quad =
-\frac{2}{h+2\ep}\frac{1}{(1+t)^2}\int_{\Omega} uu_{t} A_\ep(x)\Phi_\ep\,dx
\\
&\quad \leq
\frac{2}{h+2\ep}\frac{1}{(1+t)^2}
\left(\int_{\Omega} a(x)A_\ep(x)|u|^2\Phi_\ep\,dx\right)^\frac{1}{2}
\left(\int_{\Omega} \frac{A_\ep(x)}{a(x)}|u_{t}|^2 \Phi_\ep \,dx\right)^\frac{1}{2}
\\
&\quad \leq
\frac{\ep}{(h+2\ep)^2}\,\frac{1}{(1+t)^2}
\int_{\Omega} a(x)A_\ep(x)|u|^2\Phi_\ep\,dx
+
\frac{A_{2\ep}(R_0+1)^2}{\ep a_1^2}
E_a(t; \pa_tu).
\end{align*}
Combining them with \eqref{en_eq2},
we reach \eqref{e1-1}.
\end{proof}

Furthermore,
multiplying
$E_1(t;u)$ and $E_2(t;u)$
by the time weight functions
$(t_1+t)^m$ and $(t_2 + t)^{\lambda}$,
respectively,
we have the following time-weighted differential inequalities.

\begin{lemma}\label{est}
The following assertions hold:

\smallskip

\noindent{\bf (i)} %
Set 
$t_*(\alpha,m):= 2m/a_1$. 
Then for every $t,m\geq 0$ and $t_1\geq t_*(\alpha,m)$,  
\begin{align}
\label{e1-m}
\frac{d}{dt}
\Big((t_1+t)^{m}E_1(t;u)\Big)
\leq 
m
(t_1+t)^{m-1}E_{\pa x}(t;u)
- \frac{1}{2}(t_1+t)^{m}E_a(t;\pa_t u).
\end{align}
\noindent{\bf (ii)} %
For every $t,\lambda\geq 0$ and $t_2\geq 1$,  
\begin{align}
\label{e2-l}
&\frac{d}{dt}\Big((t_2+t)^{\lambda}E_2(t;u)\Big) \\
\nonumber
&\quad\leq 
\lambda(1+\ep)(t_2+t)^{\lambda-1}
E_a(t;u)
-\frac{1-3\ep}{1-\ep}
(t_2+t)^{\lambda}E_{\pa x}(t;u)
\\
\nonumber
&\qquad
+
\left(
\frac{2}{a_1}+\frac{A_{2\ep}(R_0+1)^2}{\ep a_1^2}+
\frac{\lambda}{2\ep a_1^2 t_2}
\right)(t_2+t)^{\lambda}E_a(t;\pa_t u).
\end{align}
\noindent{\bf (iii)} %
In particular, setting 
\begin{align*}
\nu
&:=
\frac{4}{a_1}+\frac{2A_{2\ep}(R_0+1)^2}{\ep a_1^2}+
\frac{1}{\ep a_1}, 
\\
t_{**}(\ep,\alpha,\lambda)
&:=
\max\left\{
	\frac{(1-\ep) \lambda \nu}{\ep},
	\lambda, 1, t_{\ast}(\alpha,m)
	\right\},
\end{align*}
one has that for $t,\lambda\geq 0$ and 
$t_3\geq t_{**}(\ep,\alpha,\lambda)$,  
\begin{align}
\nonumber
&\frac{d}{dt}
\Big(
\nu (t_3+t)^{\lambda}E_1(t;u)
+(t_3+t)^{\lambda}E_2(t;u)
\Big)
\\
\label{e3-l}
&\quad \leq 
-\frac{1-4\ep}{1-\ep}
(t_3+t)^{\lambda}E_{\pa x}(t;u)
+
\lambda(1+\ep)(t_3+t)^{\lambda-1}
E_a(t;u).
\end{align}
\end{lemma}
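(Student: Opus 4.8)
The three assertions are purely mechanical consequences of the two differential inequalities $(\ref{e0-1})$ and $(\ref{e1-1})$ already established in the preceding lemma, combined with the Hardy inequality $(\ref{hardy})$ and the coupling bounds $(\ref{E12-F})$--$(\ref{E21-F})$. The strategy in each case is to apply the product rule to the time-weighted quantity, substitute the known inequality for the undifferentiated energy derivative, and then absorb any harmful positive terms using the auxiliary estimates. I would treat the three parts in order, since (iii) is built by adding a $\nu$-multiple of (i) to (ii).

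\textbf{Part (i).} First I would compute
\[
\frac{d}{dt}\big((t_1+t)^m E_1(t;u)\big)
= m(t_1+t)^{m-1}E_1(t;u) + (t_1+t)^m \frac{d}{dt}E_1(t;u).
\]
Inserting $(\ref{e0-1})$ for the last derivative gives an upper bound with $+m(t_1+t)^{m-1}(E_{\pa x}+E_{\pa t})$ and $-(t_1+t)^m E_a(t;\pa_t u)$. The term $+m(t_1+t)^{m-1}E_{\pa t}(t;u)$ is the only obstruction to $(\ref{e1-m})$, and I would kill half of the $E_a(t;\pa_t u)$ budget with it: by $(\ref{E12-F})$ one has $E_{\pa t}(t;u)\le a_1^{-1}E_a(t;\pa_t u)$, so $m(t_1+t)^{m-1}E_{\pa t}\le \frac{m}{a_1}(t_1+t)^{m-1}E_a(t;\pa_t u)$, and this is $\le \tfrac12 (t_1+t)^m E_a(t;\pa_t u)$ exactly when $t_1+t\ge 2m/a_1=t_*(\alpha,m)$, which holds under the hypothesis $t_1\ge t_*(\alpha,m)$. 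This leaves the claimed $m(t_1+t)^{m-1}E_{\pa x}$ term and the retained $-\tfrac12(t_1+t)^m E_a(t;\pa_t u)$, giving $(\ref{e1-m})$.

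\textbf{Part (ii).} Here the product rule yields $\lambda(t_2+t)^{\lambda-1}E_2(t;u)+(t_2+t)^\lambda\frac{d}{dt}E_2(t;u)$; I would bound the derivative by $(\ref{e1-1})$ and rewrite $E_2=E_*+E_a$. The genuinely new point is controlling the cross term $\lambda(t_2+t)^{\lambda-1}E_*(t;u)$. Using $(\ref{E21-F})$, $|E_*|\le 2a_1^{-1/2}\sqrt{E_a E_{\pa t}}$, then $(\ref{E12-F})$ to replace $E_{\pa t}$ by $a_1^{-1}E_a(t;\pa_t u)$, and finally Young's inequality to split the product into a piece $\le \lambda\ep(t_2+t)^{\lambda-1}E_a(t;u)$ (absorbed into the $\lambda(1+\ep)$ coefficient) and a piece proportional to $\tfrac{\lambda}{\ep a_1^2 t_2}(t_2+t)^\lambda E_a(t;\pa_t u)$; the factor $t_2^{-1}$ appears because one power of $(t_2+t)^{-1}\le t_2^{-1}$ is traded to rebalance the weights. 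This produces exactly the three coefficients displayed in $(\ref{e2-l})$, using $t_2\ge 1$.

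\textbf{Part (iii).} I would simply form $\nu\cdot(\ref{e1-m})$ (with $m=\lambda$, $t_1=t_3$) plus $(\ref{e2-l})$ (with $t_2=t_3$). Adding them, the good negative gradient terms combine to $-\frac{1-4\ep}{1-\ep}(t_3+t)^\lambda E_{\pa x}$: one loses the extra $\ep/(1-\ep)$ from the $+\nu\lambda(t_3+t)^{\lambda-1}E_{\pa x}$ contributed by (i), which I would dominate by $\frac{\ep}{1-\ep}(t_3+t)^\lambda E_{\pa x}$ once $t_3+t\ge (1-\ep)\lambda\nu/\ep$. The crucial cancellation is that the total $E_a(t;\pa_t u)$ coefficient becomes $\nu\cdot(-\tfrac12)+\big(\tfrac{2}{a_1}+\tfrac{A_{2\ep}(R_0+1)^2}{\ep a_1^2}+\tfrac{\lambda}{2\ep a_1^2 t_3}\big)$, and the definition of $\nu$ is engineered precisely so that this is $\le 0$ for $t_3\ge t_{**}(\ep,\alpha,\lambda)$; hence the $E_a(t;\pa_t u)$ term drops out entirely, leaving only $(\ref{e3-l})$.

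\textbf{Main obstacle.} The only delicate bookkeeping is in part (ii): choosing the Young's-inequality split of the $E_*$ cross term so that exactly an $\ep$-fraction lands in the $E_a(t;u)$ coefficient (keeping it at $\lambda(1+\ep)$) while the remainder lands in the $E_a(t;\pa_t u)$ coefficient with the correct $\tfrac{\lambda}{2\ep a_1^2 t_2}$ weight. Once that constant-chasing is done, parts (i) and (iii) are routine, the sole verification being that the stated thresholds $t_*$ and $t_{**}$ are indeed large enough to force the indicated sign changes — which follows directly from the explicit definition of $\nu$.
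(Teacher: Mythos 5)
Your proposal is correct and follows essentially the same route as the paper's proof: product rule plus \eqref{e0-1} and \eqref{E12-F} with the threshold $t_1+t\ge 2m/a_1$ for (i); product rule plus \eqref{e1-1}, with the cross term $\lambda(t_2+t)^{-1}E_*$ handled via \eqref{E21-F}, \eqref{E12-F} and Young's inequality for (ii); and the linear combination $\nu\cdot\text{(i)}+\text{(ii)}$ with $\nu$ engineered to cancel the $E_a(t;\pa_t u)$ coefficient and $t_3\ge(1-\ep)\lambda\nu/\ep$ absorbing the extra gradient term for (iii). No gaps.
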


\begin{proof}
{\bf (i)} 
Let $m\geq 0$ and $t_1\geq t_{*}(\alpha,m)$. 
Using \eqref{e0-1} and \eqref{E12-F}, we have 
\begin{align*}
&(t_1+t)^{-m}
\frac{d}{dt}
\Big((t_1+t)^{m}E_1(t;u)\Big) \\
&\quad =
\frac{m}{t_1+t}
E_{\pa x}(t;u) 
+
\frac{m}{t_1+t}
E_{\pa t}(t;u)
+\frac{d}{dt}E_1(t;u)
\\
&\quad \leq 
\frac{m}{t_1+t}
E_{\pa x}(t;u) 
+
\frac{m}{t_1+t}
E_{\pa t}(t;u)-E_a(t;\pa_t u)
\\
&\quad \leq 
\frac{m}{t_1+t}
E_{\pa x}(t;u) 
+ 
\left(\frac{m}{a_1(t_1+t)} - 1\right) E_a(t;\pa_t u).
\end{align*}
Since
$t_{\ast} = 2m/a_1$,
we see that
\begin{align*}%
	\frac{m}{a_1(t_1+t)} - 1 \le - \frac12
\end{align*}%
holds for $t_1 \ge t_{\ast}$ and $t\ge 0$.
This implies \eqref{e1-m}.

\noindent{\bf (ii)} 
By using \eqref{e2-l} and \eqref{e1-1},
$t\geq 0$ and $t_2 \geq 1$, we have
\begin{align*}%
&(t_2+t)^{-\lambda}
\frac{d}{dt}\Big((t_2+t)^{\lambda}E_2(t;u)\Big)
\\
&\quad \leq 
\frac{\lambda}{t_2+t}E_*(t;u)
+ 
\frac{\lambda}{t_2+t}E_a(t;u)+\frac{d}{dt}E_2(t;u)
\\	
&\quad \leq 
\frac{\lambda}{t_2+t}E_*(t;u)
+ 
\frac{\lambda}{t_2+t}E_a(t;u)
-\frac{1-3\ep}{1-\ep}
E_{\pa x}(t;u) \\
&\qquad +\left(\frac{2}{a_1}+\frac{A_{2\ep}(R_0+1)^2}{\ep a_1^2}\right)
E_a(t;\pa_t u).
\end{align*}%
Noting that \eqref{E21-F} and \eqref{E12-F} lead to
\begin{align*}%
\frac{\lambda}{t_2+t}E_{*}(t;u)
&\leq 
\frac{2\lambda}{\sqrt{a_1}(t_2+t)}
\sqrt{ E_a(t;u) E_{\pa t}(t;u) }
\\
&\leq
\frac{2\lambda}{a_1(t_2+t)}
\sqrt{ E_a(t;u) E_{a}(t; \pa_t u) }
\\
&\leq 
\frac{\lambda\ep}{t_2+t}
E_a(t;u)
+
\frac{\lambda}{\ep a_1^2 (t_2+t)}
E_a(t;\pa_t u),
\end{align*}%
we deduce \eqref{e2-l}.

\smallskip 

\noindent{\bf (iii)}  
Combining \eqref{e1-m} with $m=\lambda$ 
and \eqref{e2-l}, we have
for $t_3\geq t_{**}(\ep,\alpha,\lambda)$ and $t\geq 0$, 
\begin{align*}%
&\frac{d}{dt}
\Big(
\nu(t_3+t)^{\lambda}E_1(t;u)
+(t_3+t)^{\lambda}E_2(t;u)
\Big)
\\
&\leq 
\left(
\frac{\nu \lambda}{ t_3+t }
-\frac{1-3\ep}{1-\ep}
\right)(t_3+t)^{\lambda}E_{\pa x}(t;u)
+\lambda(1+\ep)(t_3+t)^{\lambda-1}
E_a(t;u)
\\
&\quad
+\left(
\frac{2}{a_1}+\frac{A_{2\ep}(R_0+1)^2}{\ep a_1^2}+
\frac{\lambda}{2\ep a_1^2 t_3}
- \frac{\nu}{2}\right)
(t_3+t)^{\lambda}E_a(t;\pa_t u)
\\
&\leq 
-
\frac{1-4\ep}{1-\ep}
(t_3+t)^{\lambda}E_{\pa x}(t;u)
+\lambda(1+\ep)(t_3+t)^{\lambda-1}
E_a(t;u),
\end{align*}%
which gives the assertion.
\end{proof}

\begin{proof}[Proof of Proposition \ref{main}]
By \eqref{E21-F}, we easily have 
\begin{align}
\label{ea}
\nu E_1(t;u)+E_2(t;u)&\geq \frac{3}{4}E_a(t;u).
\end{align}
By using the above estimate, 
we prove the assertion via mathematical induction. 

\noindent
{\bf Step 1 ($k=0$).}\ 
From \eqref{e3-l} 
and Lemma \ref{lem_ha}, we deduce that
\begin{align*}
&\frac{d}{dt}
\Big(
\nu (t_3+t)^{\lambda}E_1(t;u)
+(t_3+t)^{\lambda}E_2(t;u)
\Big) \\
&\quad \leq 
\left(
-\frac{1-4\ep}{1-\ep}
+
\frac{\lambda(1+\ep)(h+2\ep)}{1-\ep}
\right)
(t_3+t)^{\lambda}E_{\pa x}(t;u).
\end{align*}
Taking 
$\lambda_0=\frac{(1-\ep)(1-4\ep)}{(1+\ep)(h+2\ep)}$,
integrating over $(0,t)$ with respect to $t$
and using \eqref{ea},
we have
\begin{align*}
&\frac{3}{4}(t_3+t)^{\lambda_0}E_a(t;u)
+
\frac{\ep(1-4\ep)}{1-\ep}
\int_{0}^t(t_3+s)^{\lambda_0}E_{\pa x}(s;u)\,ds
\\
&
\leq 
\nu t_3^{\lambda_0}E_1(0;u)
+t_3^{\lambda_0}E_2(0;u).
\end{align*}
This gives the desired estimate for $E_a(t;u)$.
Also, as a byproduct, we obtain
\begin{align*}%
	\frac{\ep(1-4\ep)}{1-\ep}
	\int_{0}^t(t_3+s)^{\lambda_0}E_{\pa x}(s;u)\,ds
	&\le \nu t_3^{\lambda_0}E_1(0;u)
+t_3^{\lambda_0}E_2(0;u).
\end{align*}%
Using this inequality and \eqref{e1-m} with $m=\lambda_0+1$, 
and integrating over $(0,t)$, we obtain
\begin{align*}
&(t_3+t)^{\lambda_0+1}E_1(t;u)
+\frac{1}{2}
\int_{0}^t(t_3+s)^{\lambda_0+1}E_a(s;\pa_t u)\,ds
\\
&\quad \leq 
t_3^{\lambda_0+1}E_1(0;u)
+
(\lambda_0+1)
\int_0^t
(t_3+s)^{\lambda_0}E_{\pa x}(s;u)\,ds
\\
&\quad \leq
t_3^{\lambda_0+1}E_1(0;u)+\frac{(\lambda_0+1)(1-\ep)}
{\ep(1-4\ep)}
\Big(
\nu t_3^{\lambda_0}E_1(0;u)
+t_3^{\lambda_0}E_2(0;u)
\Big).
\end{align*}
Finally, putting $\delta = 1/h - \lambda_0$,
we have the desired assertion with $k=0$.
Moreover, we also have
\begin{align*}%
	&\frac{1}{2}
	\int_{0}^t(t_3+s)^{\lambda_0+1}E_a(s;\pa_t u)\,ds \\
	&\le t_3^{\lambda_0+1}E_1(0;u)+\frac{(\lambda_0+1)(1-\ep)}
	{\ep(1-4\ep)}
	\Big(
	\nu t_3^{\lambda_0}E_1(0;u)
	+t_3^{\lambda_0}E_2(0;u)
	\Big),
\end{align*}%
which will be used in the next step.
\smallskip

\noindent{\bf Step 2 ($1<k\leq k_0-1$).}\ 
Suppose that 
for every $t\geq 0$, 
\[
(1+t)^{\lambda_0+2k-1}
E_{1}(t;\pa_t^{k-1}u)
+
(1+t)^{\lambda_0+2k-2}E_a(t;\pa_t^{k-1}u)
\leq M_{\ep,k-1}\|(u_0,u_1)\|_{H^{k}\times H^{k-1}(\Omega)}^2
\]
and additionally, 
\[
\int_{0}^t(1+s)^{\lambda_0+2k-1}E_a(s;\pa_t^{k}u)\,ds
\leq M_{\ep, k-1}'\|(u_0,u_1)\|_{H^{k}\times H^{k-1}(\Omega)}^2
\]
are valid.
Since the initial value $(u_0,u_1)$ 
satisfies the compatibility condition
of order $k$, 
$\pa_t^{k} u$ is also a solution of 
\eqref{dw} with the initial data
$(\pa_t^{k}u, \pa_t^{k+1}u)(x,0) = (u_{k-1},u_{k})(x)$.
Applying \eqref{e3-l} with $\lambda=\lambda_0+2k$,  
putting $t_{3k}=t_{**}(\ep,R_0,\alpha,\lambda_0+2k)$ 
(see Lemma \ref{est}\ {\bf (iii)}),
integrating over $(0,t)$,
and noting \eqref{ea},
we have
\begin{align*}
&
\frac{3}{4}
(t_{3k}+t)^{\lambda_0+2k}E_a(t;\pa_t^{k} u)
+\frac{1-4\ep}{1-\ep}
\int_0^t(t_{3k}+s)^{\lambda_0+2k}E_{\pa x}(s;\pa_t^{k} u)\,ds
\\
&\quad \leq 
\nu t_{3k}^{\lambda_0+2k}E_1(0;\pa_t^{k} u)
+t_{3k}^{\lambda_0+2k-1}E_2(0;\pa_t^{k} u) \\
&\qquad +
(\lambda_0+2k)(1+\ep)
M_{\ep, k-1}'\|(u_0,u_1)\|_{H^{k}\times H^{k-1}(\Omega)}^2.
\end{align*}
In particular, we obtain the boundedness of
the second term of the left-hand side.
From this and \eqref{e1-m} with $m=\lambda_0+2k+1$
we conclude 
\begin{align*}
&
(t_{3k}+t)^{\lambda_0+2k+1}E_1(t;\pa_t^{k}u)
+
\frac{1}{2}
\int_0^t(t_{3k}+s)^{\lambda_0+2k+1}E_a(s;\pa_t^{k+1}u)
\,ds
\\
&\quad \leq M''_{\ep,k}
\Big(
E_1(0;\pa_t^{k} u)+E_2(0;\pa_t^{k} u)+\|(u_0,u_1)\|_{H^{k}\times H^{k-1}(\Omega)}^2
\Big)
\end{align*}
with some constant $M''_{\ep,k}>0$.
Therefore, by induction, we obtain the desired inequalities for all $k\leq k_0-1$. 
\end{proof}

\section{Proof of the diffusion phenomena}
In this section, we give a proof of Theorem \ref{thm1}.
We use the following lemma stated in 
\cite[Section 4]{SoWa16}. 
\begin{lemma}\label{Duhamel}
Assume that 
$
	(u_0,u_1) \in (H^2\cap H^1_0(\Omega))
		\times H^1_0(\Omega)
$
and
suppose that ${\rm supp}\,(u_0,u_1)\subset \{ x\in \Omega ; |x| \le R_0 \}$. 
Then for every $t\geq 0$, 
\begin{align}
\nonumber
	u(x,t) - e^{-tL_{\ast}}[ u_0 + a(\cdot)^{-1}u_1]
	 &= - \int_{t/2}^t e^{-(t-s)L_{\ast}}[ a(\cdot)^{-1}u_{tt}(\cdot, s) ] ds\\
\nonumber
	&\quad -e^{-\frac{t}{2}L_{\ast}} [ a(\cdot)^{-1}u_{t}(\cdot, t/2) ] \\
\label{eq_du2}
	&\quad +\int_0^{t/2} L_{\ast}  e^{-(t-s)L_{\ast}}[ a(\cdot)^{-1}u_{t}(\cdot, s) ] ds,
\end{align}
where
$L_{*}$
is the Friedrichs extension of
$L=-a(x)^{-1}\Delta$ in $L^2_{d\mu}$.
\end{lemma}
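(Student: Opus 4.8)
The plan is to realize the damped wave equation as an inhomogeneous heat equation governed by $L_\ast=-a(x)^{-1}\Delta$ and to apply the variation-of-constants formula. Dividing the first line of \eqref{dw} by $a(x)$ and moving the second-order time derivative to the right-hand side rewrites \eqref{dw} as the abstract Cauchy problem
\begin{align*}
	\frac{d}{dt}u(t)+L_\ast u(t)=f(t),\qquad u(0)=u_0,\qquad f(t):=-a(\cdot)^{-1}u_{tt}(\cdot,t),
\end{align*}
posed in the Hilbert space $L^2_{d\mu}(\Omega)$. Since $L_\ast$ is nonnegative and self-adjoint by Lemma \ref{lem_A}, it generates a bounded analytic semigroup $(e^{-tL_\ast})_{t\ge0}$, and once the abstract problem is known to hold classically the solution must coincide with the mild solution
\begin{align*}
	u(t)=e^{-tL_\ast}u_0-\int_0^t e^{-(t-s)L_\ast}\bigl[a(\cdot)^{-1}u_{ss}(\cdot,s)\bigr]\,ds.
\end{align*}

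First I would verify the functional-analytic facts that make this rigorous. By finite propagation speed (Lemma \ref{lem_fps}), for each fixed $t$ the functions $u(\cdot,s),u_s(\cdot,s),u_{ss}(\cdot,s)$ with $s\in[0,t]$ are all supported in the fixed ball $B(0,R_0+t)$, on which $a$ is continuous and---since $0\notin\bar\Omega$---bounded above and below by positive constants, so $a$ and $a^{-1}$ are both bounded there and the $L^2_{d\mu}$ and $L^2$ norms are equivalent on that support. Together with the regularity class \eqref{sol} this gives $u_0+a(\cdot)^{-1}u_1\in L^2_{d\mu}(\Omega)$, $u\in C([0,t];D(L_\ast))\cap C^1([0,t];L^2_{d\mu}(\Omega))$ and $f\in C([0,t];L^2_{d\mu}(\Omega))$; moreover $u(s)\in D(L_\ast)$ with $L_\ast u(s)=-a(\cdot)^{-1}\Delta u(s)$, since $u(s)\in H^2\cap H^1_0$ and $a^{-1}\Delta u(s)\in L^2_{d\mu}$ on the compact support. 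Standard theory for the inhomogeneous abstract equation then delivers the mild formula above.

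The remaining step is the integration by parts that trades $u_{ss}$ for $u_s$ on the first half of the time interval. I would split the Duhamel integral at $s=t/2$ and, writing $g(s):=a(\cdot)^{-1}u_s(\cdot,s)$ so that $a^{-1}u_{ss}=g'(s)$, use the identity $\frac{d}{ds}e^{-(t-s)L_\ast}=L_\ast e^{-(t-s)L_\ast}$ to obtain
\begin{align*}
	\int_0^{t/2} e^{-(t-s)L_\ast}g'(s)\,ds
	=\Bigl[e^{-(t-s)L_\ast}g(s)\Bigr]_{0}^{t/2}
		-\int_0^{t/2} L_\ast e^{-(t-s)L_\ast}g(s)\,ds.
\end{align*}
The boundary evaluation gives $e^{-\frac{t}{2}L_\ast}[a^{-1}u_t(t/2)]-e^{-tL_\ast}[a^{-1}u_1]$. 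After inserting this into the Duhamel formula, the $e^{-tL_\ast}[a^{-1}u_1]$ term combines with $e^{-tL_\ast}u_0$ into the leading term $e^{-tL_\ast}[u_0+a^{-1}u_1]$, while the semigroup term $-e^{-\frac{t}{2}L_\ast}[a^{-1}u_t(t/2)]$ and the surviving integral $+\int_0^{t/2}L_\ast e^{-(t-s)L_\ast}[a^{-1}u_t(s)]\,ds$ appear precisely as in \eqref{eq_du2}.

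The main technical point, rather than an obstacle, is the rigorous justification of these abstract identities: that the Sobolev-regular strong solution is a genuine classical solution of the abstract Cauchy problem in $L^2_{d\mu}(\Omega)$, and that the operator-valued integration by parts is legitimate. The latter is where analyticity is essential: on $[0,t/2]$ one has $t-s\ge t/2$, so the smoothing bound $\|L_\ast e^{-(t-s)L_\ast}\|_{\mathcal{L}(L^2_{d\mu})}\lesssim (t-s)^{-1}$ keeps the integrand bounded and the last integral absolutely convergent---no singularity arises precisely because the time derivative was moved off the forcing before reaching the diagonal $s=t$. Since the statement is quoted from \cite[Section 4]{SoWa16}, these verifications are routine and I would only sketch them.
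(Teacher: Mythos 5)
Your proposal is correct and follows essentially the same route the paper intends: the paper itself only cites \cite[Section 4]{SoWa16} for this lemma, but the derivation it sketches in the introduction---rewriting \eqref{dw} as $u_t+L_\ast u=-a^{-1}u_{tt}$, applying the variation-of-constants formula, splitting at $s=t/2$, and integrating by parts on $[0,t/2]$ using $\frac{d}{ds}e^{-(t-s)L_\ast}=L_\ast e^{-(t-s)L_\ast}$---is exactly your argument, and your sign bookkeeping and the supporting facts (finite propagation speed making $a^{\pm1}$ bounded on the relevant supports, $u(s)\in D(L_\ast)$ with $L_\ast u(s)=-a^{-1}\Delta u(s)$, analytic smoothing keeping the last integral convergent since $t-s\ge t/2$) are all in order.
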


\begin{proof}[Proof of Theorem \ref{thm1}]
First we show the assertion for $(u_0,u_1)$ 
satisfying the compatibility condition of order $2$. 
Taking $L^2_{d\mu}$-norm of both side, we have
\[
\Big\|u(x,\cdot) - e^{-tL_{\ast}}[ u_0 + a(\cdot)^{-1}u_1]\Big\|_{L^2_{d\mu}}
\leq 
\mathcal{J}_1(t)
+\mathcal{J}_2(t)
+\mathcal{J}_3(t),
\]
where
\begin{align*}
\mathcal{J}_1(t)&:=
\int_{t/2}^t 
\big\|e^{-(t-s)L_{\ast}}[ a(\cdot)^{-1}u_{tt}(\cdot, s)]\big\|_{L^2_{d\mu}} ds, 
\\
\mathcal{J}_2(t)&:=\big\|e^{-\frac{t}{2}L_{\ast}} [ a(\cdot)^{-1}u_{t}(\cdot, t/2)]\big\|_{L^2_{d\mu}},
\\
\mathcal{J}_3(t)&:=\int_0^{t/2} \big\|L_{\ast}  e^{-(t-s)L_{\ast}}[ a(\cdot)^{-1}u_{t}(\cdot, s)]\big\|_{L^2_{d\mu}} ds.
\end{align*}
Noting that
$a(x) \ge a_1$ and $\Phi(x,t) \ge 1$,
and applying Proposition \ref{main} with $k=1$ and $k=0$
we see that
\begin{align*}
\big\|a(\cdot)^{-1}\pa_t^{k+1}u(\cdot, s)\big\|_{L^2_{d\mu}}^2
&=
  \int_{\Omega}a(x)^{-1}|\pa_t^{k+1}u(\cdot, s)|^2\,dx
\\
&\leq 
	\frac{1}{a_1} \int_{\Omega}|\pa_t^{k+1}u(\cdot, s)|^2\Phi_\ep\,dx
\\
&\leq 
  \frac{1}{a_1} E_{\pa t}(t,\pa_t^{k}u)
\\
&\leq 
	\frac{M_{\delta,k}}{a_1} (1+t)^{-\frac{N+\alpha}{2+\alpha}-2k-1+\delta}
  \|(u_0,u_1)\|_{H^{k+1}\times H^k}^2.
\end{align*}
Therefore, we have
\begin{align*}
\mathcal{J}_1(t)
&\leq 
\int_{t/2}^t 
\big\|a(\cdot)^{-1}u_{tt}(\cdot, s)\big\|_{L^2_{d\mu}} ds
\\
&\leq 
  \sqrt{\frac{M_{\delta,1}}{a_1}}
  \|(u_0,u_1)\|_{H^{2}\times H^1}
  \int_{t/2}^t (1+s)^{-\frac{N+\alpha}{2(2+\alpha)}-\frac32+\frac{\delta}{2}} ds
\\
&\leq 
  C_{1,\delta}
  (1+t)^{-\frac{N+\alpha}{2(2+\alpha)}-\frac12+\frac{\delta}{2}}
  \|(u_0,u_1)\|_{H^{2}\times H^1},
\end{align*}
where $C_{1,\delta}$ is a positive constant depending on $\delta$.
In the same way, we deduce
\begin{align*}
\mathcal{J}_2(t)
&\leq \big\|a(\cdot)^{-1}u_{t}(\cdot, t/2)\big\|_{L^2_{d\mu}}
\leq 
  \sqrt{\frac{M_{\delta,0}}{a_1}}
  	(1+t)^{-\frac{N+\alpha}{2(2+\alpha)}-\frac12 + \frac{\delta}{2}}
\|(u_0,u_1)\|_{H^{1}\times L^2}.
\end{align*}
Next, we estimate the term $\mathcal{J}_3(t)$.
First, we treat the case $N = 2$.
In this case, by Proposition \ref{prop_lplq2}, 
we have
\begin{align*}
	\mathcal{J}_3(t)
	&\leq 
	C\int_0^{t/2} 
	(t-s)^{-\frac32}
	\big
	\|a(\cdot)^{-1}u_{t}(\cdot, s)\big\|_{L^1_{d\mu}} ds \\
	&\leq 
	C t^{-\frac32}
	\int_0^{t/2} 
	\sqrt{\|\Phi_\ep^{-1}(\cdot,s)\|_{L^1(\Omega)}E_{\pa t}(s;u)}\,
	ds.
\end{align*}
We compute
\begin{align*}
  \|\Phi_{\ep}^{-1}(\cdot,t)\|_{L^1(\Omega)}
  &\leq 
  \int_{\R^2}
  	\exp\left(-\frac{A_{1\ep}}{h+2\ep}\,\frac{|x|^{2+\alpha}}{1+t} \right)\,dx
  \\
  &=
  (1+t)^{\frac{2}{2+\alpha}}
  \int_{\R^2}\exp\left(-\frac{A_{1\ep}}{h+2\ep}|y|^{2+\alpha}\right)\,dy.
\end{align*}
From this and Proposition \ref{main}, we conclude
\begin{align*}%
	\mathcal{J}_3(t)
	&\leq
	C t^{-\frac32}
	\int_0^{t/2}
		(1+s)^{\frac{1}{2+\alpha}-\frac{2+\alpha}{2(2+\alpha)}-\frac12+\frac{\delta}{2}} ds
		\cdot \| (u_0,u_1) \|_{H^1 \times L^2}\\
	&\leq C t^{-\frac{2+\alpha}{2(2+\alpha)} - \frac{1+\alpha}{2+\alpha} + \frac{\delta}{2}}
		\| (u_0,u_1) \|_{H^1 \times L^2}.
\end{align*}%

When $N\ge 3$, Proposition \ref{prop_lplq2} leads to
\begin{align*}
	\mathcal{J}_3(t)
	&\leq 
	C\int_0^{t/2} 
	(t-s)^{-\frac{N}{2}\left(\frac{1}{q}-\frac{1}{2}\right)-1}
		\left(
		\int_{\Omega} | a(x)^{-1}u_t(x,s)|^q |x|^{(N-2)\alpha (2-q)/4} d\mu(x)
		\right)^{1/q}
	ds \\
	&\leq 
	C
	t^{-\frac{N}{2}\left(\frac{1}{q}-\frac{1}{2}\right)-1}
	\int_0^{t/2} 
		\left(
		\int_{\Omega} |u_t(x,s)|^q a(x)^{-(q-1)} |x|^{(N-2)\alpha (2-q)/4} dx
		\right)^{1/q}
	ds,
\end{align*}
where
$q \in ((p_{\alpha})^{\prime}, 2]$
and sufficiently close to $(p_{\alpha})^{\prime}$.
Using $a(x) \ge a_1 \langle x \rangle^{\alpha}$
and the Cauchy--Schwarz inequality,
we further calculate
\begin{align*}%
	J_3(t)
	&\le C t^{-\frac{N}{2}\left(\frac{1}{q}-\frac{1}{2}\right)-1}
		\int_{0}^{t/2}
		\left\| u_t(s) |x|^{%
			\alpha \left[ \frac{N}{2} \left( \frac{1}{q} - \frac{1}{2} \right)-\frac12 \right]}
		\right\|_{L^q} ds\\
	&\le C t^{-\frac{N}{2}\left(\frac{1}{q}-\frac{1}{2}\right)-1}
		\int_{0}^{t/2}
		\left\| u_t(s) \Phi_{\ep}^{1/2} \right\|_{L^2}
		\left\| \Phi_{\ep}^{-1/2}  |x|^{%
			\alpha \left[ \frac{N}{2} \left( \frac{1}{q} - \frac{1}{2} \right)-\frac12 \right]}
			\right\|_{L^{2q/(2-q)}} ds.
\end{align*}%
We also notice that
if $q$ is close to $(p_{\alpha})^{\prime}$,
then
$\frac{N}{2} \left( \frac{1}{q} - \frac{1}{2} \right)-\frac12 > 0$
holds.
Therefore,
by changing variables, we estimate
\begin{align*}%
	\left\| \Phi_{\ep}^{-1/2}  |x|^{%
			\alpha \left[ \frac{N}{2} \left( \frac{1}{q} - \frac{1}{2} \right)-\frac12 \right]}
			\right\|_{L^{2q/(2-q)}}
	&\le C (1+t)^{%
		\frac{\alpha}{2+\alpha}%
		\left[ \frac{N}{2} \left( \frac{1}{q} - \frac{1}{2} \right)-\frac12 \right]%
		+ \frac{N}{2+\alpha} \left( \frac{1}{q} - \frac{1}{2} \right)} \\
	&\le C (1+t)^{%
		\frac{N}{2}\left(\frac{1}{q}-\frac{1}{2}\right)-\frac{\alpha}{2(2+\alpha)}}.
\end{align*}%
From this estimate and Proposition \ref{main}, we obtain
\begin{align*}%
	J_3(t)&\le
	C(1+t)^{-\frac{N}{2}\left(\frac{1}{q}-\frac{1}{2}\right)-1}
		\int_{0}^{t/2} (1+s)^{%
			\frac{N}{2}\left(\frac{1}{q}-\frac{1}{2}\right)-\frac{\alpha}{2(2+\alpha)}}
		(1+s)^{-\frac{N+\alpha}{2(2+\alpha)}-\frac{1}{2}+\frac{\delta}{2}}
		ds\cdot
		\| (u_0,u_1) \|_{H^1 \times L^2}\\
	&\le C
	(1+t)^{-\frac{N}{2}\left(\frac{1}{q}-\frac{1}{2}\right)-1%
		+\max\left\{ %
			0, \frac{N}{2}\left(\frac{1}{q}-\frac{1}{2}\right)%
				-\frac{N+2\alpha}{2(2+\alpha)}-\frac{1}{2}+\frac{\delta}{2}%
		\right\} + \delta^{\prime}}
		\| (u_0,u_1) \|_{H^1 \times L^2}
\end{align*}%
with arbitrary small $\delta^{\prime} > 0$.
Taking $q$ sufficiently close to $(p_{\alpha})^{\prime}$,
we conclude
\begin{align*}%
	J_3(t)&\le
	C (1+t)^{-\frac{N+\alpha}{2(2+\alpha)}%
		- \frac{1+\alpha}{2+\alpha} + \delta^{\prime \prime}}
		\| (u_0,u_1) \|_{H^1 \times L^2},
\end{align*}%
where
$\delta^{\prime\prime}>0$
depends on
$\delta$, $\delta^{\prime}$
and it can be taken arbitrary small.

Finally, combining the estimates for
$\mathcal{J}_1(t), \mathcal{J}_2(t), \mathcal{J}_3(t)$
and noting
$1/2 < (1+\alpha)/(2+\alpha)$,
we have the desired estimate.

Next we show the assertion for $(u_0,u_1)$ 
satisfying $(u_0,u_1)\in (H^2\times H^1_0(\Omega))\times H^1_0(\Omega)$
(the compatibility condition of order $1$) 
via an approximation argument. 
Fix $\phi\in C_c^\infty(\R^N,[0,1])$ such that $\phi\equiv1$ on $\overline{B}(0,R_0)$ and $\phi\equiv0$ on $\R^N\setminus B(0,R_0+1)$ and define 
for $n\in\N$, 
\[
\left(\begin{array}{c}
u_{0n}
\\
u_{1n}
\end{array}\right)
=
\left(\begin{array}{c}
\phi \tilde{u}_{0n}
\\
\phi \tilde{u}_{1n}
\end{array}\right),
\quad 
\left(\begin{array}{c}
\tilde{u}_{0n}
\\
\tilde{u}_{1n}
\end{array}\right)
=
\left(1+\frac{1}{n}\mathcal{A}\right)^{-1}
\left(\begin{array}{c}
u_0
\\
u_1
\end{array}\right),
\]
where $\mathcal{A}$ is an quasi-$m$-accretive operator 
in $\mathcal{H}=H^1_0(\Omega)\times L^2(\Omega)$ 
associated with \eqref{dw}, that is, 
\[
\mathcal{A}=
\left(\begin{array}{cc}
0 & -1
\\
-\Delta & a(x)
\end{array}\right)
\]
endowed with domain 
$D(\mathcal{A})=(H^2\cap H^1_0(\Omega))\times H_0^1(\Omega)$. 
Then $(u_{0n},u_{1n})$ satisfies ${\rm supp} (u_{0n},u_{1n})\subset \overline{B}(0,R_0+1)$
and the compatibility condition of order $2$. 
Let $v_n$ be a solution of \eqref{dw} with 
$(u_{0n},u_{1n})$. 
Observe that 
\begin{align*}
\|(u_{0n},u_{1n})\|_{H^2\times H^1}^2
&\leq C^2\|\phi\|_{W^{2,\infty}}^2
\|(\tilde{u}_{0},\tilde{u}_{1})\|_{H^2\times H_1}^2
\\
&\leq C'^2\|\phi\|_{W^{2,\infty}}^2
(\|(\tilde{u}_{0},\tilde{u}_{1})\|_{\mathcal{H}}^2
+\|\mathcal{A}(\tilde{u}_{0},\tilde{u}_{1})\|_{\mathcal{H}}^2)
\\
&\leq C'^2\|\phi\|_{W^{2,\infty}}^2
(\|(u_{0},u_1)\|_{\mathcal{H}}^2
+\|\mathcal{A}(u_{0},u_{1})\|_{\mathcal{H}}^2)
\\
&\leq C''^2\|\phi\|_{W^{2,\infty}}^2
\|(u_{0},u_{1})\|_{H^2\times H^1}^2
\end{align*}
with suitable constants $C$, $C'$, $C''>0$, 
and 
\begin{gather*}
\left(\begin{array}{c}
u_{0n}
\\
u_{1n}
\end{array}\right)
\to 
\left(\begin{array}{c}
\phi u_0
\\
\phi u_1
\end{array}\right)
=
\left(\begin{array}{c}
u_0
\\
u_1
\end{array}\right)
\quad 
\text{in}\ \mathcal{H}
\end{gather*}
as $n\to \infty$ and also $u_{0n}+a^{-1}u_{1n}\to u_0+a^{-1}u_1$ 
in $L^2_{d\mu}$ as $n\to \infty$. 
Using the result of the previous step, we deduce
\[
\Big\|v_n(\cdot,t) - e^{tL_{\ast}}[ u_{0n} + a(\cdot)^{-1}u_{1n}]\Big\|_{L^2_{d\mu}}
\leq 
\tilde{C}(1+t)^{-\frac{N+\alpha}{2(2+\alpha)}-\frac{1+\alpha}{2+\alpha}+\delta}
\|(u_0,u_1)\|_{H^{2}\times H^1}
\]
with some constant $\tilde{C}>0$. 
Letting $n\to\infty$, by continuity of the $C_0$-semigroup $e^{-t\mathcal{A}}$ 
in $\mathcal{H}$
we also obtain diffusion phenomena 
for initial data in $(H^2\cap H^1_0(\Omega))\cap H^1_0(\Omega)$. 
\end{proof}

\section*{Acknowledgments}

This work is partially supported 
by Grant-in-Aid for Young Scientists Research (B), 
No.\ 16K17619
and Grant-in-Aid for Young Scientists Research (B), 
No.\ 16K17625.

\end{document}